\documentclass[11pt]{article}

\usepackage{enumerate}
\usepackage{xcolor}

\usepackage{mathdots}
\usepackage{amsmath,amssymb,amsthm}
\usepackage{url}
\parskip 2ex
\usepackage{mathtools}
\usepackage{epsfig,amsfonts,graphicx,color}
\usepackage{a4wide}
\newtheorem{Theorem}{Theorem}
\newtheorem{Lemma}{Lemma}[section]
\newtheorem{Remark}{Remark}[section]
\newtheorem{Proposition}{Proposition}[section]
\newtheorem{Corollary}{Corollary}[section]
\newtheorem{Fact}{Fact}
\newtheorem{Example}{Example}[section]

\newtheorem{Definition}{Definition}


\newcommand{\be}{\begin{equation}}
\newcommand{\ee}{\end{equation}}

\newcommand{\tr}{\mathrm{tr}\,}

\newcommand{\R}{\mathbb{R}}
\newcommand{\Id}{\operatorname{Id}}

\newcommand{\ddd}{\mathrm{d}\,}

\newcommand{\LC}{\mbox{\tiny{\sf LC}}}

\newcommand{\Pthree}{{\mathcal B}}
\newcommand{\Pone}{{\mathcal A}}

\newcommand{\pd}[2]{\frac{\partial#1}{\partial#2}}

\newcommand{\const}{\operatorname{const}}

\newcommand{\weg}[1]{}

\title{Applications of Nijenhuis geometry III:    Frobenius pencils and compatible non-homogeneous Poisson structures}
\author{Alexey V. Bolsinov\footnote{ School of Mathematics,
 Loughborough University,
 LE11 3TU, UK \ \ 
 \quad {\tt A.Bolsinov@lboro.ac.uk} } \quad
\& \quad  Andrey Yu. Konyaev\footnote{Faculty of Mechanics and Mathematics, Moscow State University and Moscow Center for Fundamental and Applied Mathematics, 119992, Moscow Russia
 \ \ \quad {\tt  maodzund@yandex.ru}} \quad \& \quad Vladimir S. Matveev\footnote{
Institut f\"ur Mathematik, Friedrich Schiller Universit\"at Jena,
07737 Jena Germany  \ \ \quad {\tt  vladimir.matveev@uni-jena.de}} 
}  
\date{}
\begin{document}
\maketitle

%

\begin{abstract} 
We consider multicomponent local Poisson structures of the form $\mathcal P_3 + \mathcal P_1$, 
 under the assumption that  the third order term 
  $\mathcal P_3$ is Darboux-Poisson and non-degenerate,  and study the Poisson compatibility of two such structures.
We give an algebraic interpretation of this problem  in terms of  Frobenius algebras and reduce it  to classification of Frobenius pencils, i.e.,  linear families of Frobenius algebras.  Then, we completely describe and  classify Frobenius pencils under minor genericity conditions.  In particular we  show that  each  
 Frobenuis pencil is a subpencil of a certain {\it maximal} pencil.  These maximal pencils are uniquely determined by some combinatorial object, a
directed rooted in-forest with edges and vertices labeled by  numerical marks.
 They are also naturally related to certain 
  pencils  of Nijenhuis operators. We show that common Frobenius coordinate systems admit an elegant invariant description in terms of the corresponding Nijenhuis pencils.

{\bf MSC classes:}  37K05, 37K06, 37K10,  37K25,   37K50,  53B10,  53A20,  53B20,  53B30, 53B50, 53B99,  53D17	
\end{abstract} 

\newpage


\section{Introduction}\label{sect:1}

\subsection{Foreword}\label{subsect:1.1} 

Nijenhuis operator is a (1,1)-tensor field $L= L^i_j $ on an $n$-dimensional  manifold $M$ such that its Nijenhuis torsion vanishes.   Nijenhuis geometry,  as initiated in  \cite{nij1} (where  all necessary definitions can also be found) and further developed in \cite{NijenhuisIII,NijenhuisAppl1,konyaev},  studies Nijenhuis operators and their applications. There are many topics in mathematics and mathematical physics in which Nijenhuis operators appear naturally; this paper is devoted to the study of $\infty$-dimensional compatible Poisson brackets of type $\mathcal P_3 + \mathcal P_1$, where the lower index $i$ indicates the order of the homogeneous bracket $\mathcal P_i$ (the necessary definitions will be given in Section \ref{subsect:1.2}).  Nijenhuis geometry allows us to reformulate the initial problem, originated from 
 mathematical physics, first  into the language of algebra and then into  the language of  differential geometry and finally solve it using the machinery  of differential geometry in combination with that of algebra. Translating back the results gives a full description of (nondegenerate) compatible Poisson brackets of type $\mathcal P_3 + \mathcal P_1$ such that  $\mathcal P_3$   is Darboux-Poisson.

{\bf Acknowledgements.} This research was supported through the programme `Research in Pairs'  by the Mathematisches Forschungsinstitut Oberwolfach in 2021. The authors are grateful to R.~Vitolo, P.~Lorenzoni, J.~Draisma, F.~Cl\'ery and especially to E.~Ferapontov  for their valuable comments, suggestions and explanations.  The research of V.M.  was supported by  DFG, grant number MA 2565/7.  The authors thank the reviewer  for careful reading of the manuscript and very useful remarks.

\subsection{Mathematical setup} \label{subsect:1.2}

The construction below is a special case of the general approach suggested in   \cite{gd}. For   $n=1$, the construction can be found in \cite{gdi}, see also \cite{cas, olver, vit}.

We work in an open  disc $U\subset \mathbb{R}^n$  with coordinates $u^1,...,u^n$. Our constructions are invariant with respect to  coordinate changes so one may equally think of $(u^1,...,u^n)$ as a coordinate chart on a smooth  manifold $M$.

Consider the jet bundles (of curves) over  $U$. Recall that for a point $\mathsf p\in{U}$, the  $k^{\textrm{th}}$ jet  space $J^k_{\mathsf p}U$ at this point is an equivalence class of smooth curves  $c:(-\varepsilon,\varepsilon) \to U$  such that $c(0)=\mathsf p$. The parameter of the curves $c$ will always be denoted by $x$.  The equivalence relation is as follows: two  curves are equivalent if they  coincide at $c(0)$ up to terms of order $k+1$.

For example, for $k=0$  the space $J^0_{\mathsf p}U$ contains only one element  and the definition of  $J_{\mathsf p}^1U$ coincides with one of the  standard definitions of the tangent space $T_{\mathsf p}U$.

It is known that  $J^k_{\mathsf p}U$ is naturally equipped with the structure of a vector space of dimension $n\times k$ with coordinates  denoted  by 
\begin{equation} 
\label{eq:intro:2}
(u^1_x,...,u^n_x,u^1_{x^2},...,u^n_{x^2},...,u^1_{x^k},...,u^n_{x^k}).
\end{equation}
  
Namely,  a curve $c(x)= (u^1(x),...,u^n(x))$ with $c(0)={\mathsf p}$ viewed as an element of $J_{\mathsf p}^kU$ has coordinates 
\begin{equation} 
\label{eq:intr:1} 
\begin{array}{ll} 
&\left(u^1_x,...,u^n_x,u^1_{x^2},..., u^n_{x^2},...,u^1_{x^k},...,u^n_{x^k}\right) \\
=&\left(\tfrac{\ddd}{\ddd x}(u^1),...,\tfrac{\ddd}{\ddd x}(u^n),\tfrac{\ddd^2}{\ddd x^2}(u^1),..., \tfrac{\ddd^2}{\ddd x^2}(u^n),..., \tfrac{\ddd^k}{\ddd x^k}(u^1),...,\tfrac{\ddd^k}{\ddd x^k}(u^n)\right)_{|x=0}.
\end{array} 
\end{equation}

We denote by $J^kU$ the union $  \bigcup_{{\mathsf p}\in U} J^k_{\mathsf p}U$. It   has a natural structure of a $k\times n$-dimensional vector bundle over $U$. The coordinates  $(u^1,...,u^n)$ on $U$  and \eqref{eq:intro:2} on $J^k_{\mathsf p}U$ generate  a coordinate system 
$$
(u^1,...,u^n, u^1_x,...,u^n_x,u^1_{x^2},...,u^n_{x^2},...,u^1_{x^k},...,u^n_{x^k} ) 
$$
on $J^kU$ adapted to the bundle structure. Any $C^\infty$ curve  $c:[a,b] \to U, \ x\mapsto (u^1(x),...,u^n(x)) $  naturally lifts  to a  curve $\widehat c$ on  $J^kU$ by  
\begin{equation}
\label{eq:intro:3}
\widehat c:[a,b]\to J^kU\ , \  \     x_0\mapsto \left(u^1,...,u^n,  \tfrac{\ddd}{\ddd x}(u^1),...,\tfrac{\ddd}{\ddd x}(u^n),..., \tfrac{\ddd^k}{\ddd x^k}(u^1),...,\tfrac{\ddd^k}{\ddd x^k}(u^n)\right)_{|{x=x_0}}. 
\end{equation}

Next, for every ${\mathsf p}\in U$ denote by ${\Pi}[J^k_{\mathsf p}U]$ the algebra  of polynomials in variables  \eqref{eq:intro:2} on $J^k_{\mathsf p}U$. It has a natural structure of an infinite-dimensional vector bundle over $U$. Let $\mathfrak{A}_k$ denote the algebra of $C^\infty$-smooth sections of the  bundle ${\Pi}[J^k_{\mathsf p}U]$.  Notice that we have natural inclusion $\mathfrak{A}_k \subset \mathfrak{A}_{k+1}$  and set $\mathfrak{A}=\bigcup_{k=0}^\infty \mathfrak{A}_k$.   In simple terms, the elements of $\mathfrak{A}$ are finite sums  of  finite products of coordinates 
\begin{equation} 
\label{eq:intro:2bis}
(u^1_x,...,u^n_x,u^1_{x^2},...,u^n_{x^2},...,u^1_{x^k},...,u^n_{x^k},... ) 
\end{equation}
with coefficients being $C^\infty$-functions on $U$. The summands in this sum, i.e., terms of the form 
$a_{i_1...i_n}^{j_1...j_n}(u) (u^1_{x^{i_1}})^{j_1}... (u^n_{x^{i_n}})^{j_n}  $ with $a_{i_1...i_n}^{j_1...j_n}(u)\not\equiv 0$
will be called {\it differential monomials}. The   {\it differential degree} of such a differential monomial is the number $i_1j_1+ i_2j_2+...+i_nj_n$. For example, $f(u) u^1_{x^2} (u^2_{x})^2$ has differential degree $1\times 2+ 2\times 1=4$. 
 Differential degree of an element of $\mathfrak{A}$ is the maximum of the differential degrees of its  differential monomials, it is a nonnegative integer number.  Elements of $\mathfrak{A}$ will be called {\it differential polynomials}.

Generators of this algebra are coordinates  $u^i_{x^j}$ and functions on $U$. Every element of  $\mathfrak{A}$ can be obtained from finitely many  generators using finitely  many summation and multiplication operations.

The following two linear mappings  will be important for us. The first one,  called the {\it total $x$-derivative} and denoted by $D$ (another standard notation used in literature is $\tfrac{d}{dx}$) is  defined as follows. One requires  that $D$  satisfies the Leibnitz rule and then defines it on the generators of $\mathfrak{A}$, i.e., on functions $f(u)$   and coordinates \eqref{eq:intro:2bis}, by setting
$$
D(f)= \sum_{i=1}^n \frac{\partial f}{\partial u^i} u^i_x \  , \   \ D ( u^i_{x^j})= u^i_{x^{j+1}}.
$$ 
Clearly, the operation $D$ increases the differential degree by one at most.

Next, denote by $\tilde{\mathfrak{A}}$ the quotient algebra $\mathfrak{A}/{D(\mathfrak{A})}$.  The tautological projection $\mathfrak A \to  \tilde{\mathfrak{A}}$ is traditionally denoted by $\mathcal H \mapsto \int \mathcal H dx \in \tilde{\mathfrak{A}}$.
In simple terms it means that we think that two differential polynomials $\mathcal{H}, \bar {\mathcal{H}}$  are  equal, if their difference is a total derivative of a differential polynomial.

Note that by construction, the operation $D$ has the following remarkable property, which  explains its name and  also 
the  	notation  $\tfrac{d}{dx}$ used for $D$ sometimes in  literature. For any curve $c:[a,b]\to U$ whose lift \eqref{eq:intro:3} will be denoted by $\widehat c$ and for any  element  $\mathcal{H} \in  {\mathfrak{A}}$ we have: 
\begin{equation}
\label{eq:intro:4}
\tfrac{\ddd}{\ddd x}\left(	\mathcal{H}(\widehat c)\right)= \left(D\mathcal{H}\right)(\widehat c).
\end{equation}

The second mapping is the mapping from $\mathfrak{A}$ to an $n$-tuple  of elements of $\mathfrak{A}$. 
The mapping will be denoted by $\delta$ and will be called {\it the  variational derivative}. 
Its $i^{\textrm{th}}$ component will be denoted by $\tfrac{\delta}{\delta u^i}$ and for an element $\mathcal{H}\in \mathcal{A}$  it is given by the Euler-Lagrange formula:
$$
 \frac{\delta \mathcal{H}}{\delta u^i}= \sum_{k =0}^\infty (-1)^k D^k\left(\frac{\partial \mathcal{H}}{\partial u_{x^k}^i}\right)
$$
 (only finitely many elements in the sum are different from zero so the result is again a differential polynomial).  It is known, see e.g. \cite{gdi}, that for an element $\mathcal{H}\in \mathfrak{A}$ we have $\delta \mathcal{H} =0$ if and only if $\mathcal H$ is a total $x$-derivative. Then, we again see that the variational derivative does not depend on the choice of  differential polynomial  in the equivalence class  $\mathcal{H} \subset \mathfrak{A}$.  Then, the mapping $\delta$ induces a well-defined mapping on $\tilde{\mathfrak{A}}, $ which will be denoted  by the same letter $\delta$.  One can think of $\delta \mathcal{H}$ as a covector  with entries from $\tilde {\mathfrak{A}}$, because the  transformation rule of its entries  under the change of $u$-coordinates is a natural generalisation of  the transformation rule for (0,1)-tensors.

Following \cite{dn, dn2}, let us define a (homogeneous, nondegenerate) Poisson bracket of order $1$. We choose  a  contravariant flat  
metric $g=g^{ij}$ of any signature whose   Levi-Civita connection will be denoted by 
$\nabla= (\Gamma_{jk}^i)$. Next,  
 consider the following operation 
$\Pone_g: \tilde{\mathfrak{A}} \times \tilde{\mathfrak{A}} \to \tilde{\mathfrak{A}}$: for two elements $\mathcal{H}, {\bar {\mathcal{H}}} \in \tilde{\mathfrak{A}} $ we set
\begin{equation} 
\label{eq:intro:5}
\Pone_g( \mathcal{H}, \bar{\mathcal{H}}) =   \int \tfrac{\delta \bar{\mathcal{ H}}}{\delta u^\alpha}  
\left(  g^{ \alpha\beta} D \left(\tfrac{\delta \mathcal{ H}}{\delta u^\beta}\right)-\Gamma^{\alpha\beta}_\gamma  \tfrac{\delta \mathcal{ H}}{\delta u^\beta}
 u^\gamma_x\right) \ddd x. 
\end{equation}
In the formula above and later in the text,  we sum over repeating indices and assume 
 $\Gamma^{is}_{j} = \Gamma^s_{pj} g^{pi}$. The components $\Gamma^{is}_{j}$ will be called {\it contravariant Christoffel symbols}, when we speak about different metrics we always  raise the index {\it by the own metric}.   A common way to write the operation $\Pone_g$ which we also will use in our paper assumes applying it to  $\tfrac{\delta \mathcal{ H}}{\delta u^\beta}$ and multiplication with 
$\tfrac{\delta \bar{\mathcal{ H}}}{\delta u^\alpha}$ (and  of course summation and projection to $\tilde{\mathfrak A}$):   
\begin{equation}
\label{eq:intro:6}
\Pone_g =  g^{\alpha\beta} D  - \Gamma^{\alpha\beta}_\gamma  u^\gamma_x.  
\end{equation}

It is known, see e.g. \cite{dn,dn2,dub1}, that the operation $\Pone_g$ given by  \eqref{eq:intro:5} defines a Poisson bracket on $\tilde {\mathfrak{A}}$, that is, it is skew-symmetric and satisfies the Jacobi identity.  Moreover, one can show that the operation constructed by $g$  and $\Gamma$  via  \eqref{eq:intro:5} defines a Poisson bracket if and only if $g$ is flat, that is, its curvature is zero, and $\Gamma_{jk}^i$ is the Levi-Civita connection of $g$.  
It is also known that  the construction \eqref{eq:intro:5} does not depend on the coordinate system on $U$.

Next, let us define a (nondegenerate, homogeneous)  Darboux-Poisson structure of  order $3$. We choose a nondegenerate contravariant flat metric $h= h^{ij}$ of arbitrary signature and define the operation $\Pthree_h:\tilde{\mathfrak{A}} \times \tilde{\mathfrak{A}} \to \tilde{\mathfrak{A}}$ by the formula:  
\begin{equation}
\label{intro:third}
    \begin{aligned}
    \Pthree_h & =  
		h^{\alpha q} \left(\delta^p_q D - \Gamma_{qm}^p u^m_x\right) \left(\delta^r_p D - \Gamma_{pk}^r u^k_x\right) \left(\delta^\beta_r D - \Gamma_{rs}^\beta u^s_x\right)=
  \\  &=  h^{\alpha \beta} D^3 - 3 h^{\alpha q} \Gamma^{\beta}_{q s} u^s_{x}D^2 + \\
    & + 3 \Bigg(  h^{\alpha q}\Big(\Gamma^p_{qs} \Gamma^{\beta}_{pr} - \pd{\Gamma^{\beta}_{qs}}{u^r}\Big)u^s_{x} u^r_{x} - h^{\alpha q}\Gamma^{\beta}_{qs} u^s_{x^2} \Bigg) D + \\
    & + \Bigg( h^{\alpha q} \Big( 2 \Gamma^a_{qs} \pd{\Gamma^{\beta}_{ar}}{u^p} + \pd{\Gamma^a_{qs}}{u^r}\Gamma^{\beta}_{ap} - \Gamma^a_{qs} \Gamma^b_{ar} \Gamma^{\beta}_{bp} - \frac{\partial^2 \Gamma^{\beta}_{qs}}{\partial u^r \partial u^p}\Big) u^s_{x}u^r_{x}u^p_{x} + \\
    & + h^{\alpha q} \Big( 2 \Gamma^a_{qs} \Gamma_{ar}^{\beta} + \Gamma^a_{qr} \Gamma^{\beta}_{as} - 2 \pd{\Gamma^{\beta}_{qr}}{u^s} - \pd{\Gamma^{\beta}_{qs}}{u^r}\Big) u^s_{x} u^r_{x^2} - h^{\alpha q} \Gamma^{\beta}_{qs} u^s_{x^3} \Bigg)
    \end{aligned}
\end{equation}
In the formula we have used the same conventions as above, i.e., assume summation over repeating indices. Moreover, similar to formula \eqref{eq:intro:6}, we  did not write $\mathcal{H}, {\bar {\mathcal{H}}}$ in the formula.   They are assumed there as follows: the differential operator  \eqref{intro:third}  is applied to $\tfrac{\delta \mathcal{ H}}{\delta u^\alpha}$,  the result is 
 multiplied by $\tfrac{\delta \bar{\mathcal{ H}}}{\delta u^\beta}$, and then we perform summation  with respect to  the repeating indices  $\alpha, \beta$.

As in the case of order 1, the operation   $\Pthree_h$ given by  \eqref{intro:third}
 defines a Poisson bracket on $\tilde {\mathfrak{A}}$. The construction of this Poisson bracket   is independent on the choice of coordinate system on $U$. 
However, in contrast to the case of order 1, the form \eqref{intro:third} is not the most general form for a local Poisson bracket on $\tilde {\mathfrak{A}}$ of order  $3$. 
In fact, the word {\it Darboux} indicates that in a certain coordinate system (flat coordinate system for $h$ in our case) 
 the coefficients  of the Poisson structure are constants\footnote{The terminology `Darboux-Poisson'
is motivated by \cite{doyle}.}. In this {\it Darboux} coordinate system the Christoffel symbols $\Gamma^i_{jk}$ are all zero and  formula \eqref{intro:third} reduces to\footnote{In fact, \eqref{intro:third} is just the formula \eqref{eq:intro:7} rewritten in an arbitrary (not necessarily `flat') coordinate system.} 
\begin{equation}
\label{eq:intro:7}
\Pthree_h(\mathcal{H}, \bar{\mathcal{H}}) =   \tfrac{\delta \bar{\mathcal{ H}}}{\delta u^\beta}  h^{\alpha \beta} D^3\left(\tfrac{\delta {\mathcal{ H}}}{\delta u^\alpha}\right),
\end{equation}
with the components $h^{\alpha\beta}$ of the metric $h$ being constants. 

Poisson structures $\mathcal P_1$ of order $1$ are always Darboux-Poisson, but there are examples, see e.g. \cite{fpv,fpv1, pot},  of Poisson structures $\mathcal P_3$ of order $3$ which are not Darboux-Poisson.

Similar to the  finite-dimensional case,  a Poisson structure $\mathcal{P}$  and  choice of a `Hamiltonian' $\mathcal{H}\in \tilde{\mathfrak{A}}$  allows one  to define the Hamiltonian flow, which in our setup  is a system of $n$ PDEs on $n$ functions $u^i(t,x)$ 
of two variables, $t$ and $x$. It is given by:
\begin{equation}
\label{eq:intro:8}
\tfrac{\partial u^\beta }{\partial t} = \mathcal{P}^{\alpha\beta}\left(\tfrac{\delta {\mathcal{ H}}}{\delta u^\alpha}\right).
\end{equation} 
For example, in the case of the Poisson structure  \eqref{eq:intro:5}  for a Hamiltonian of degree $0$ (i.e., for a function $H$  on $U$) the Hamiltonian flow is  given by 
\begin{equation}
\label{eq:intro:9}
 \tfrac{\partial u^\beta }{\partial t} = g^{ \beta\alpha} \tfrac{ \partial^2 H}{\partial u^\alpha \partial u^\gamma} u^\gamma_x -\Gamma^{\beta\alpha}_\gamma \tfrac{ \partial H}{\partial u^\alpha }   u^\gamma_x=
\tfrac{\partial u^\gamma}{\partial x}\nabla^\beta\nabla_\gamma H.
\end{equation}
Such systems of PDEs are called {\it  Hamiltonian systems of hydrodynamic type}.

In our paper we study compatibility of nonhomogeneous Poisson structures of type $\mathcal P_3+\mathcal P_1$ such that the part of order $3$ is Darboux-Poisson. That is, we have 4 nondegenerate  Poisson structures: $\Pone_g$ and $\Pone_{\bar g}$ constructed by flat metrics $g$ and $\bar g$ by \eqref{eq:intro:5}, and $\Pthree_h$ and $\Pthree_{\bar h}$ constructed by flat metrics $h$ and $\bar h$ by 
\eqref{intro:third}. We assume that $\Pone_g + \Pthree_h$  and  $\Pone_{\bar g} + \Pthree_{\bar h}$ are (nonhomogeneous) Poisson structures and  ask the question when these structures   are compatible in the sense that any of their linear combinations is a Poisson structure \cite{magri}. Since it is automatically skew-symmetric, the compatibility  is equivalent to the Jacobi identity for each linear combination of $\Pone_g + \Pthree_h$ and $\Pone_{\bar g} + \Pthree_{\bar h}$.

The meaning of  the word `nondegenerate'  relative to the Poisson structures under discussion is as follows: the metrics $g,\bar g, h, \bar h$ which we used to construct them are nondegenerate, i.e., they are given by matrices with nonzero determinant. Additional nondegeneracy condition, natural from the viewpoint of mathematical physics, is as follows: the operators $R_h= \bar h h^{-1}$ and $R_g= \bar g g^{-1}$ have $n$ different eigenvalues.  Under these conditions, we solve the problem completely: we find explictly all pairs of such Poisson structures.

Let us comment on the assumption that  $\mathcal P_3$ is {\it Darboux-Poisson}. The compatibility of two geometric Poisson brackets ${\mathcal P}_3 +\mathcal P_1$ and $\bar{\mathcal P}_3 + \bar{\mathcal P}_1$ amounts to a highly overdetermined PDE system which is expected to  imply additional conditions on the third order parts, from which {\it Darboux-Poisson} is a natural  candidate.     
Indeed, in the literature  we have not found any example of compatible Poisson brackets  ${\mathcal P}_3 +\mathcal P_1$ and $\bar{\mathcal P}_3 + \bar{\mathcal P}_1$ such that  
  $\mathcal P_3$ and $\bar{\mathcal P}_3$ are not Darboux-Poisson and nonproportional. Even in the case when $\bar {\mathcal P}_3=0$, only  few  
 examples are known, namely the compatible brackets for WDVV system from   \cite{FGM} (this example is three-dimensional and moreover, 
$\mathcal P_1=0$) and a family of compatible  brackets ${\mathcal P}_3 +\mathcal P_1$ and $\bar{\mathcal P}_1$  constructed in \cite{lor} in dimensions 1 and 2.

Our  main motivation came  from the theory of integrable systems, in which   many famous integrable systems have been constructed  and analysed using compatible Poisson brackets. Those of the form $\mathcal P_3 + \mathcal P_1$, described and classified in the present paper,  
generalise compatible Poisson brackets related to  KdV,  Harry Dym,  Camassa-Holm and Dullin-Gottwald-Holm equations.
The applications of such new brackets will be developed  in a series of  separate papers, of which the first one has already appeared,  \cite{NijenhuisAppl4}. In this paper we have generalised the above equations  for an arbitrary number of components, and have constructed new integrable PDE systems  that have no low-component analogues. Note that in  \cite{NijenhuisAppl4} we used the simplest 
  pencil  of compatible Poisson structures of type $\mathcal P_3 + \mathcal P_1$ (so-called AFF-pencil from Section \ref{subsect:2.3}); more complicated pencils will lead to more new families of integrable multicomponent PDEs.  

 Let us also comment on a more physics-oriented approach to the  construction above, see e.g. \cite{doyle}. Physicists often 
view  $x$ as  a space coordinate, and $(u^1,...,u^n)$ as field coordinates. In the simplest situation, 
 the values $u^1,...,u^n$ at $x$ may  describe some physical values (e.g., pressure, temperature, charge, density, momenta). The total energy of the system  is the integral over the $x$ variable of some differential polynomial in $u^1,...,u^n$, and the 
Hamiltonian functions $\mathcal{H}\in \mathfrak{A}$ have  then the physical meaning of the density of the energy, i.e., of the integrand in the formula $\textrm{{\bf Energy}}(c)= \int \mathcal{H}(\widehat c)dx$.
 Further, it is assumed that the physical system is either periodic in $x$, or one is interested in fast decaying  solutions as $x\to \pm \infty$.  The integration by parts implies then that the differential polynomial is defined up to 
an addition of the total derivative in $x$  which allows one to pass to  $\tilde{\mathfrak{A}}=   \mathfrak{A}/{D(\mathfrak{A})}$. 
The natural analog of the differential of  a function in this setup is the variational derivative $\tfrac{\delta }{\delta u^\alpha}$, and actually the equation \eqref{eq:intro:8} is the natural analog of the finite-dimensional equation $\dot u= X_H$ (where $X_H$ is the Hamiltonian vector field of a function $H$; it is given by $X_H^j= P^{ij}\tfrac{\partial H}{\partial u^i}$ where $P(u)^{ij}$ is the matrix of  the Poisson structure; please note similarity with  \eqref{eq:intro:8}).  
Generally, it is  useful  to keep in mind the physical interpretation and analogy with the finite-dimensional case.



\subsection{Brief description of main results, structure of the paper and conventions} \label{sebsect:1.3}

In this paper we address the following problems:

\begin{itemize}

\item[(A)] {\it Description of compatible pairs, $\Pthree_h + \Pone_g$ and    
$\Pthree_{\bar h} + \Pone_{\bar g}$, of non-homogeneous Poisson brackets in arbitrary dimension $n$.} In    Theorems \ref{thm:frobenius0} and  
\ref{prop:bols13} we 
 give an algebraic interpretation of this problem  in terms of  Frobenius algebras and reduce it to classification of Frobenius pencils, i.e. linear families of Frobenius algebras. We do it under the following nondegeneracy assumption: the (1,1)-tensor  $R_h= \bar hh^{-1}$ (connecting $h$ and $\bar h$) has $n$ different eigenvalues.

\item[(B)] {\it Description and classification of Frobenius pencils.} 
We reduce this purely algebraic problem to a differential geometric one (explicitly formulated in Section \ref{subsect:6.1})
and completely solve it  using geometric methods.    The nondegeneracy assumption is that the (1,1)-tensor  $R_g= \bar gg^{-1}$ (connecting $g$ and $\bar g$) has $n$ different eigenvalues.  
Namely, we show that each Frobenuis pencil in question is a subpencil of a certain {\it maximal} pencil.   
We explicitly describe all maximal pencils, see Theorems  \ref{thm:frobenius1},  \ref{thm:frobenius2a} and \ref{thm:frobenius2}.

\begin{itemize}\item[(B1)] A generic in a certain sense maximal pencil corresponds to the well-known multi-Poisson structure discovered by M. Antonowitz and A. Fordy in \cite{fordy} and studied by E.~Ferapontov and M.~Pavlov \cite{Ferapontov-Pavlov}, see also 
\cite{fordy2,fordy3,NijenhuisAppl2}.   We refer to  it  as to   Antonowitz-Fordy-Frobenius pencil,   {\it AFF-pencil}. In Theorem \ref{thm:frobenius1} we show that every two-dimensional Frobenius pencil with one additional genericity assumption is contained
in the {\it AFF-pencil}.

\item[(B2)]  Our main result, Theorems \ref{thm:frobenius2a} and \ref{thm:frobenius2}, give a complete   description in the most  general case.    Theorem    \ref{thm:frobenius2a} constructs all maximal Frobenius pencils  using AFF-pencils as building blocks.  Theorem    \ref{thm:frobenius2}  states that  each Frobenuis pencil is a subpencil of a certain {\it maximal} pencil from Theorem \ref{thm:frobenius2a}. 
 These maximal pencils are uniquely determined by some combinatorial data,   directed rooted in-forest  $\mathsf F$ with 
 edges labeled by numbers $\lambda_\alpha$'s and
 vertices labeled by natural numbers whose sum is the dimension of the manifold.  The AFF-pencil corresponds to the simplest case, when $\mathsf F$ consists of a single vertex. To the best of our knowledge, the other Frobenius pencils and the corresponding bi-Poisson structures are new. 

\end{itemize} 

In addition,  we show that common Frobenius coordinate systems  admit an  elegant invariant description in terms of the Nijenhuis pencil $\mathcal L$, see  Theorem \ref{thm:frobenius2a}.

\item[(C)]  {\it Dispersive perturbations of compatible Poisson brackets of hydrodynamic type.}
The general question is as follows: 
given two compatible Poisson structures   $\Pone_g$ and $\Pone_{\bar g}$
of the first order, can one find flat metrics 
$h$ and $\bar h$ such that  
  $\Pthree_h + \Pone_g$ and  $\Pthree_{\bar h} + \Pone_{\bar g}$ are compatible Poisson structures?   
  This passage from a Poisson bracket of hydrodynamic type to a non-homogeneous  Poisson bracket of higher  order is called {\it dispersive perturbation} in literature. 
  We study dispersive perturbations of bi-Hamiltonian structures assuming that the third order terms $\Pthree_h$ and  $\Pthree_{\bar h}$ are Darboux-Poisson.

We describe all such perturbations under the  assumption that both $R_h=\bar h h^{-1}$ and $R_g=\bar g g^{-1}$  have $n$ different eigenvalues, 
and in particular, answer a question from \cite{Ferapontov-Pavlov} on dispersive perturbations of the  AFF-pencil (Remark \ref{QFP}).

\end{itemize}

The results of this paper also have the following unexpected  application.
 It turns out that the {\it diagonal} coordinates for the operator $R_g=\bar g g^{-1}$ are 
 orthogonal separating coordinates for the metric  $g$.
 In  \cite{separation} we show that every   orthogonal  separating coordinates for  a flat metric $g$ of arbitrary signature can be constructed in this way. Namely, we   reduce PDEs that define orthogonal separating coordinates to  those studied in the present paper. This leads us to  an explicit description of all   orthogonal  separating coordinates for metrics of constant curvature and thus solves  a long-standing  and actively studied  problem in mathematical physics, see \cite{separation} for details.

The structure of the paper is as follows.
In Section \ref{sect:2}, we start with basic facts and constructions related to compatibility of homogeneous Poisson structures of order 1 and  3, then give description of compatible non-homogeneous structures $\Pthree_g + \Pone_h$ and $\Pthree_{\bar h} + \Pone_{\bar g}$ in terms of Frobenius algebras (Theorems \ref{thm:frobenius0} and \ref{prop:bols13}),  leading us to the classification problem for the so-called Frobenius pencils.  We conclude this section with an example of AFF-pencil.  The AFF-pencil plays later a role of a building block in our general construction. Moreover, it provides an answer under a minor  nondegeneracy assumption, see Theorem \ref{thm:frobenius1} in Section \ref{sect:3}, where we also discuss a question of Ferapontov and Pavlov.  Theorem \ref{thm:frobenius1} will be proved in  Section \ref{sect:6}. 
 
 In Section \ref{sect:4}  we formulate the  answer to the classification problem in its full  generality. Theorem \ref{thm:frobenius2} (proved in Section \ref{sect:7}) gives a description of Frobenius pencils in the `diagonal' coordinates for $g,\bar g$, and  Theorem \ref{thm:frobenius2a} (proved in Section \ref{sect:8})  describes the  corresponding Frobenius coordinates. In  Section  \ref{subsect:4.3} we discuss the case of two blocks and give explicit formulas, see Theorem \ref{thm:frobenius3}.

All objects in our paper are assumed to be of class $C^\infty$; actually our results show that most of them are necessarily real-analytic.

Throughout the paper we use  $\Pone_g$ and $\Pthree_h$ to denote the Poisson structures of order 1 and 3  given by \eqref{eq:intro:6} and \eqref{intro:third} respectively. Unless otherwise stated,  the metrics we deal with (such as $g$, $h$, $\bar g$, $\bar h$, \dots) are contravariant. 

\section{Non-homogeneous compatible brackets and Frobenius algebras}\label{sect:2}

\subsection{Basic facts and preliminary discussion}\label{subsec:2.1}

Recall that we study  compatibility of two Poisson structures $\Pthree_h+ \Pone_g$ and $\Pthree_{\bar h}+ \Pone_{\bar g}$, constructed by flat metrics $h,\bar h, g, \bar g$; our  goal is to construct all of them.  Recall that by definition it means that for any constants $\lambda, \bar \lambda$ the linear combination  
$\lambda(\Pthree_h+ \Pone_g) + \bar \lambda(\Pthree_{\bar h}+ \Pone_{\bar g})$ is a Poisson structure. Using that $\Pthree$ and $\Pone$ have different orders,  one  obtains (see e.g. \cite{doyle})

\begin{Fact}
\label{fact:1}
Let   $h$, $\bar h$, $g$ and $\bar g$  be flat metrics.  
If $\Pthree_h + \Pone_g$ and $\Pthree_{\bar h} + \Pone_{\bar g}$ are compatible Poisson structures, then  the following holds:
\begin{itemize} 

\item[(i)] $\mathcal A_g$ and $\mathcal A_{\bar g}$ are compatible,
\item[(ii)] $\mathcal B_h$ and $\mathcal B_{\bar h}$ are compatible, 
\item[(iii)] $\mathcal A_g$ and $\mathcal B_{h}$ are compatible (as well as $\mathcal A_{\bar g}$ and $\mathcal B_{\bar h}$).

\end{itemize}
\end{Fact}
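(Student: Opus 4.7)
The plan is to reformulate all three compatibilities via the Schouten bracket of local multivectors on $\tilde{\mathfrak{A}}$ and then exploit the grading by differential order. Two Poisson structures $\mathcal{P},\mathcal{Q}$ are compatible precisely when $[\mathcal{P},\mathcal{Q}]=0$ where $[\cdot,\cdot]$ is the Schouten bracket, so the hypotheses of the Fact translate into
\begin{equation*}
[P,P]=0,\qquad [\bar P,\bar P]=0,\qquad [P,\bar P]=0,
\end{equation*}
with $P=\mathcal{B}_h+\mathcal{A}_g$ and $\bar P=\mathcal{B}_{\bar h}+\mathcal{A}_{\bar g}$.

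First I would derive (i) and (ii) from the mixed condition. Expanding $[P,\bar P]=0$ bilinearly yields
\begin{equation*}
[\mathcal{B}_h,\mathcal{B}_{\bar h}]+[\mathcal{B}_h,\mathcal{A}_{\bar g}]+[\mathcal{A}_g,\mathcal{B}_{\bar h}]+[\mathcal{A}_g,\mathcal{A}_{\bar g}]=0.
\end{equation*}
The key observation, which I would invoke rather than reprove, is that the Schouten bracket respects the grading of local multivectors by differential order: the Jacobiator of two homogeneous brackets of orders $i$ and $j$ is a local trivector whose degree in this grading is determined by $i+j$. The four summands above therefore fall into three independent homogeneous components, corresponding to $i+j$ equal to $6$, $4$ and $2$, and each piece must vanish on its own. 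This gives $[\mathcal{B}_h,\mathcal{B}_{\bar h}]=0$ (statement (ii)) and $[\mathcal{A}_g,\mathcal{A}_{\bar g}]=0$ (statement (i)); the intermediate relation $[\mathcal{B}_h,\mathcal{A}_{\bar g}]+[\mathcal{A}_g,\mathcal{B}_{\bar h}]=0$ is not claimed by the Fact and requires no further analysis.

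Next I would turn to (iii). The same grading argument applied to $[P,P]=0$ gives, from the expansion $[\mathcal{B}_h,\mathcal{B}_h]+2[\mathcal{B}_h,\mathcal{A}_g]+[\mathcal{A}_g,\mathcal{A}_g]=0$, three homogeneous equations in orders $6$, $4$ and $2$. Each must vanish separately, and the middle one, $[\mathcal{B}_h,\mathcal{A}_g]=0$, is exactly compatibility of $\mathcal{A}_g$ and $\mathcal{B}_h$. The parenthetical statement for the barred pair is obtained identically from $[\bar P,\bar P]=0$.

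The only nontrivial ingredient in this proof is the homogeneity principle invoked above: that Jacobiators of local Hamiltonian operators of different total orders lie in different graded components of the space of local trivectors and therefore cannot cancel. This is standard in the Dubrovin--Novikov calculus of local Poisson brackets and is the substantive content of the sentence preceding the Fact, as already recorded in \cite{doyle}; once it is in hand, everything else is routine bookkeeping. The only place I expect even minor delicacy is being careful that the Schouten bracket is being expanded on $\tilde{\mathfrak{A}}$ (i.e.\ modulo total $x$-derivatives) rather than on $\mathfrak{A}$, but this makes no difference to the grading argument since $D$ preserves the homogeneous filtration by differential order up to a shift.
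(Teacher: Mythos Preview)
Your proposal is correct and is exactly the argument the paper has in mind: the paper does not spell out a proof but simply says ``using that $\Pthree$ and $\Pone$ have different orders, one obtains (see e.g.\ \cite{doyle})'', which is precisely your homogeneity-by-order separation of the Schouten bracket. Your write-up is a faithful expansion of that one-line justification.
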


This Fact naturally leads us to considering  pencils (\,=\,linear combinations of metrics) $\lambda h+ \bar \lambda \bar h$ and $\lambda g + \bar \lambda \bar g$. We need the following definition:

\begin{Definition}[Dubrovin, {\cite[Definition 0.5]{dub1}}]\label{def:compmetrics}{\rm
Two contravariant flat metrics $g$ and $\bar g$ are said to be \emph{Poisson compatible}, if for each (nondegenerate) linear combination $\widehat g = \lambda g + \bar \lambda  \bar g$,  $\lambda, \bar \lambda \in \mathbb R$, the following two conditions hold:
\begin{enumerate}
\item $\widehat g$ is flat; 
    \item the contravariant Christoffel symbols  for $g$, $\bar g$ and $\widehat g$ are related as
    \begin{equation}
    \label{eq:bols1}
    \widehat \Gamma^{\alpha \beta}_s = \lambda \Gamma^{\alpha \beta}_s + \bar\lambda \bar \Gamma^{\alpha \beta}_s.
    \end{equation}
\end{enumerate}
In this case, the family of metrics $\{ \lambda g + \bar \lambda  \bar g\}_{\lambda,\bar\lambda\in\R}$ is said to be a {\it flat pencil} of metrics. 
}\end{Definition}

 The next fact explains relationship between Poisson compatibility of flat metrics and compatibility of the corresponding Poisson structures.

\begin{Fact}
\label{fact:2}
Let   $h$, $\bar h$, $g$ and $\bar g$  be flat metrics.  Then, the following statements are true: 
\begin{itemize} 
\item[(i)] $\Pone_g$ and $\Pone_{\bar g}$ are compatible if and only if $g$ and $\bar g$ are Poisson compatible.
\item[(ii)] If $\Pthree_h$ and $\Pthree_{\bar h}$ are compatible, then $h$ and $\bar h$ are Poisson compatible.
\item[(iii)] If $\Pthree_h$ and $\Pone_g$ are compatible, then $h$ and $g$ are Poisson compatible. 
\end{itemize}  
\end{Fact}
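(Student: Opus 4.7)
\textbf{Proof proposal for Fact~\ref{fact:2}.} My plan for all three parts is to unpack the vanishing of the Schouten bracket $[\mathcal P,\mathcal P]$ for the relevant sum of Poisson operators, and to extract from its highest-order terms the two conditions of Definition~\ref{def:compmetrics}.

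For part~(i), if $\Pone_g$ and $\Pone_{\bar g}$ are compatible then every linear combination $\lambda \Pone_g + \bar\lambda \Pone_{\bar g}$ is Poisson. By~\eqref{eq:intro:6} this pencil has the form $(\lambda g^{\alpha\beta}+\bar\lambda \bar g^{\alpha\beta})\,D - (\lambda \Gamma^{\alpha\beta}_\gamma + \bar\lambda \bar\Gamma^{\alpha\beta}_\gamma)\,u^\gamma_x$. I would then invoke the Dubrovin--Novikov theorem, which characterizes when an operator of this shape is Poisson: its leading coefficient must be a flat contravariant metric and its subleading coefficient must equal the corresponding contravariant Levi-Civita connection. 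Applied to $\lambda g + \bar\lambda \bar g$, this yields exactly conditions~(1) and~(2) of Definition~\ref{def:compmetrics}; the converse is the same statement read backwards.

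For parts~(ii) and~(iii) I would work through the top-order part of $[\mathcal P,\mathcal P]$. In~(ii), the leading coefficient of $\lambda \Pthree_h + \bar\lambda \Pthree_{\bar h}$ at $D^3$ is $\lambda h^{\alpha\beta}+\bar\lambda \bar h^{\alpha\beta}$, and the highest-degree component of the corresponding Jacobi identity involves only this leading metric; by the characterization of when an operator of the form~\eqref{intro:third} is Poisson (see e.g.~\cite{doyle}), this forces the pencil metric to be flat. The contribution at the next differential order couples $h,\bar h$ with their Christoffel symbols through the $D^2$-coefficient of~\eqref{intro:third}, and matching it gives the linear additivity $\widehat\Gamma = \lambda\Gamma+\bar\lambda\bar\Gamma$. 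In~(iii), the Jacobi identity for $\Pthree_h + \Pone_g$ decomposes by differential order: the pure parts reduce to the Jacobi identities for $\Pthree_h$ and $\Pone_g$ separately (which hold by hypothesis), so what remains is the vanishing of the cross Schouten bracket $[\Pthree_h,\Pone_g]$. Its top-degree component is polynomial in the leading metrics $h,g$ and their Christoffel symbols, and by the same type of analysis it organises into the flatness and Christoffel-additivity conditions of Definition~\ref{def:compmetrics} for the pencil $\lambda h + \bar\lambda g$.

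\textbf{Main obstacle.} The principal technical difficulty lies in the combinatorial bookkeeping of the Schouten bracket for the cubic operator $\Pthree_h$, whose expansion~\eqref{intro:third} already has many terms for a single metric. A convenient simplification is to work, at intermediate stages, in Darboux coordinates for one of the two metrics at a time, which collapses the corresponding $\Pthree$ to the constant form~\eqref{eq:intro:7}; since the final conclusions are coordinate-invariant, they lift back to arbitrary coordinates. For~(iii) the most delicate step is verifying that the mixed top-order contributions of $[\Pthree_h,\Pone_g]=0$ really capture full Poisson compatibility of $h$ and $g$, rather than an a priori weaker PDE relation between them.
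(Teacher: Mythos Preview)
Your treatment of part~(i) is correct and matches the paper, which simply cites Dubrovin--Novikov.

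For part~(ii) there is a genuine gap. You write that you will apply ``the characterization of when an operator of the form~\eqref{intro:third} is Poisson'' to $\lambda\Pthree_h+\bar\lambda\Pthree_{\bar h}$. But the sum of two Darboux--Poisson operators is not \emph{a priori} of the form~\eqref{intro:third}: the $D^3$-coefficient is indeed $\lambda h+\bar\lambda\bar h$, but the $D^2$-coefficient is $-3(\lambda h^{\alpha q}\Gamma^\beta_{qs}+\bar\lambda\bar h^{\alpha q}\bar\Gamma^\beta_{qs})u^s_x$, and there is no reason this equals $-3(\lambda h+\bar\lambda\bar h)^{\alpha q}\widehat\Gamma^\beta_{qs}u^s_x$ for the Levi--Civita connection $\widehat\Gamma$ of $\lambda h+\bar\lambda\bar h$ until you have already proved Poisson compatibility. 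The paper (in the proof of Theorem~\ref{thm:frobenius0}) instead applies Doyle's Theorem~3.2 for \emph{general} third-order Poisson operators to the sum: this yields a symmetric flat connection $\widehat\Gamma$ satisfying $\Gamma^{\alpha\beta}_s+\bar\Gamma^{\alpha\beta}_s=(h+\bar h)^{\alpha q}\widehat\Gamma^\beta_{qs}$; one then checks directly that $\widehat\nabla(h+\bar h)=\nabla h+\bar\nabla\bar h=0$, so $\widehat\Gamma$ is the Levi--Civita connection of $h+\bar h$, whence $h+\bar h$ is flat and the Christoffel additivity~\eqref{eq:bols1} holds. Only \emph{after} this does one conclude (via Doyle's Theorem~6.2) that the sum is itself Darboux--Poisson.

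For part~(iii), your direct Schouten-bracket analysis of $[\Pthree_h,\Pone_g]$ is a reasonable plan but differs from the paper's route, which cites \cite[Theorem~2.2]{Andrey} (stated here as Fact~\ref{fact:5}). That theorem gives much more: it produces Frobenius coordinates in which $h$ is constant and $g$ is affine; in such coordinates the Poisson compatibility of $h$ and $g$ is immediate from Fact~\ref{fact:4}. Your approach, if carried out, would give only the Poisson-compatibility conclusion and would require the ``delicate step'' you flag --- namely showing that the top-order mixed terms in $[\Pthree_h,\Pone_g]=0$ encode both flatness of $\lambda h+\mu g$ and the Christoffel relation, not merely some weaker coupling.
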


The (i)-part of Fact \ref{fact:2} is in \cite{dn}, see also \cite{Fer2001, mok2,mok1}.  In view of formula \eqref{eq:intro:6}, the two conditions from Definition \ref{def:compmetrics} are nothing else but a geometric reformulation of the compatibility condition for Poisson structures of order one,  which explains the name {\it Poisson compatible}.  The (ii)-part is an easy corollary of  \cite[Theorem 3.2]{doyle}, see also proof of Theorem \ref{thm:frobenius1} below.
The (iii)-part follows from \cite[Theorem 2.2]{Andrey}.  

Notice that unlike the case of Poisson structures of order 1,  not every pair of  Poisson compatible metrics $h$ and $\bar h$  (resp. $h$ and $g$) leads to compatible Poisson structures of higher order $\Pthree_h$ and $\Pthree_{\bar h}$ as in (ii)    (resp. $\Pthree_h$ and $\Pone_g$ as in (iii)).  Some extra conditions are required. These conditions will be explained below in Fact \ref{fact:4} (for $h$ and $g$ leading to compatible $\Pthree_h$ and $\Pone_g$) and Theorem \ref{thm:frobenius1}  (for $h$ and $\bar h$ leading to compatible $\Pthree_h$ and $\Pthree_{\bar h}$).  

Let us   also recall the relation of compatible metrics to Nijenhuis geometry:

\begin{Fact}[see \cite{Fer2001,mok2,mok1}] \label{fact:3} 
If $g$ and $\bar g$ are Poisson-compatible, then 
the (1,1)-tensor  $R= \bar g g^{-1}$ is a Nijenhuis operator. Moreover, if $g$ is flat, $R$ is a nondegenerate Nijenhuis operator with $n$ different eigenvalues, and  $\bar g:= Rg$ is flat, then $\bar g$ is Poisson-compatible with $g$.
 \end{Fact}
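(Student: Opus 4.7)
The plan is to handle the two implications separately. For the forward direction, I would first reformulate the flat-pencil condition \eqref{eq:bols1} as a single tensor identity. The Levi-Civita condition for a metric, written in terms of contravariant Christoffel symbols $\Gamma^{ij}_k$, is equivalent (using torsion-freeness) to the symmetry $g^{is}\Gamma^{jk}_s = g^{js}\Gamma^{ik}_s$. Applying this to $\widehat g = \lambda g + \bar\lambda\bar g$ with $\widehat\Gamma^{jk}_s = \lambda\Gamma^{jk}_s+\bar\lambda\bar\Gamma^{jk}_s$ and isolating the $\lambda\bar\lambda$ coefficient yields the Mokhov compatibility identity
\begin{equation*}
g^{is}\bar\Gamma^{jk}_s + \bar g^{is}\Gamma^{jk}_s \;=\; g^{js}\bar\Gamma^{ik}_s + \bar g^{js}\Gamma^{ik}_s.
\end{equation*}
My next step is to contract this with the covariant metric $g_{ai}$ and reorganise it as a formula for $\nabla R$, where $\nabla$ denotes the Levi-Civita connection of $g$ and $R^i_j = \bar g^{is}g_{sj}$. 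Because $\bar g = Rg$ is symmetric, $R$ is $g$-self-adjoint; the reorganised identity then plugs into the classical expression for the Nijenhuis torsion of a $g$-self-adjoint $(1,1)$-tensor in terms of $\nabla R$, and the antisymmetrisation forces $N_R = 0$.

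For the converse, I would invoke the Nijenhuis theorem. Since $R$ has simple spectrum and vanishing torsion, there exist local coordinates $u^1,\dots,u^n$ in which $R$ is diagonal with $R^i_i = \lambda_i(u^i)$ depending only on its own variable. The $g$-self-adjointness of $R$ together with the distinctness of the $\lambda_i$ forces $g$ to be diagonal in these coordinates, and therefore so is $\bar g = Rg$, with entries $\lambda_i g^{ii}$. In this diagonal setting the contravariant Christoffel symbols of $g$, of $\bar g$, and of any linear combination $\lambda g + \bar\lambda\bar g$ can be written out explicitly in terms of the $\lambda_i$ and $g^{ii}$, and the identity \eqref{eq:bols1} reduces to an elementary verification using only that each $\lambda_i$ depends on $u^i$ alone and that both $g$ and $\bar g$ are flat. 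Flatness of every member of the pencil then follows from the Nijenhuis condition on $R$ together with the flatness of the two endpoints.

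The main obstacle is the forward direction, specifically the algebraic bookkeeping required to convert the Mokhov identity into a coordinate-free statement about $\nabla R$ and to recognise its antisymmetric part as the Nijenhuis torsion. Some care is needed with the mixed index convention $\Gamma^{ij}_k = g^{ip}\Gamma^j_{pk}$ used throughout the paper, and in particular with the fact that neither $\Gamma^{ij}_k$ nor $\bar\Gamma^{ij}_k$ is symmetric in its upper indices. The converse is essentially algorithmic: once the adapted Nijenhuis coordinates are in hand, everything reduces to one-variable calculations along the eigendirections of $R$.
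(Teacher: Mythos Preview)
The paper does not give its own proof of Fact~\ref{fact:3}; it is stated as a known result with references to Ferapontov and Mokhov. Your outline follows essentially the approach of those references. The Mokhov compatibility identity you extract is exactly what appears later in the paper as equation~\eqref{sr} (there derived for a different purpose), and rewriting it as a relation on $\nabla R$ and recognising the antisymmetric part as $N_R$ is the standard route. For the converse, passing to diagonal Nijenhuis coordinates and computing directly is again what the cited papers do.

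One point to tighten in the converse: your last sentence, that ``flatness of every member of the pencil then follows from the Nijenhuis condition on $R$ together with the flatness of the two endpoints,'' is not quite self-evident. Once you have verified \eqref{eq:bols1} in diagonal coordinates, the contravariant curvature of $\widehat g = \lambda g + \bar\lambda\bar g$ becomes a quadratic polynomial in $(\lambda,\bar\lambda)$; the $\lambda^2$ and $\bar\lambda^2$ coefficients vanish by hypothesis, but vanishing of the $\lambda\bar\lambda$ cross-term still requires an explicit check (it uses that each $\lambda_i$ depends on $u^i$ alone, i.e.\ again the Nijenhuis condition). Alternatively one observes that $\widehat g = (\lambda\,\Id + \bar\lambda R)g$ with $\lambda\,\Id + \bar\lambda R$ again Nijenhuis with simple spectrum for generic parameters, and bootstraps. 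Either way this step is routine in the diagonal coordinates you have set up, but it should be made explicit rather than asserted.
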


As already explained,   the condition that  $\Pthree_h+ \Pone_g$ is a Poisson structure is a nontrivial geometric condition on the flat metrics $h$ and $g$, stronger than Poisson compatibility in the sense of Definition \ref{def:compmetrics}.  This  condition  was studied in literature (see e.g. \cite{balnov}) and it was observed that the compatibility of homogeneous  Poisson structures of order $3$ and $1$ is sometimes related to certain algebraic structure.
 In our case, under the assumption that $\Pthree_h$ is Darboux-Poisson,  the algebraic structure which pops up naturally is {\it Frobenius algebra}.

\begin{Definition}{\rm 
Let $(\mathfrak a, \star)$ be an  $n$-dimensional commutative associative algebra  over $\mathbb R$ endowed with   
 a nondegenerate symmetric bilinear form  $b(\, , \,)$. The pair $\bigl( (\mathfrak a, \star), b\bigr)$ is called a \emph{Frobenius algebra}, if  
 \begin{equation}
 \label{frobenius}
b(\xi \star \eta, \zeta) = b(\xi, \eta \star \zeta), \quad \mbox{for all } \xi, \eta, \zeta \in \mathfrak a.    
\end{equation}
The form $b$ is then  called a \emph{Frobenius form}. 
}\end{Definition}

Notice that we do not assume that $\mathfrak a$ is unital which makes our version slightly more general than the one used in the theory of Frobenius manifolds (see e.g. \cite{dub1}), or in certain branches of Algebra.  The bilinear form $b$ may have any signature.

 Condition   \eqref{frobenius} is  linear  in $b$, so all Frobenius forms (if we allow some of them to be degenerate) on a given commutative associative algebra form a vector space.

Fix a basis $e^1, \dots, e^n $ in $\mathfrak a$. Below we will interpret $\mathfrak a$ as the dual $(\R^n)^*$ and for this reason we interchange lower and upper indices.  Consider the structure constants $a_k^{ij}$ defined by $e^i \star e^j = a_k^{ij} e^k$ and coefficients $b^{ij}:=b(e^i, e^j)$ of the Frobenius form $b$. The algebra $\mathfrak a$ is Frobenius if and only if $a_k^{ij}$ and $b^{ij}$ satisfy the following conditions:
\begin{equation}\label{constants}
    \begin{aligned}
    & a_k^{ij} = a_k^{ji} \quad\quad\quad\ \  \text{(commutativity),} \\
    & a_{\alpha}^{ij} a^{\alpha r}_k = a_k^{i \alpha} a_{\alpha}^{jr} \quad \text{(associativity),} \\
    & b^{\alpha r} a_{\alpha}^{ij} = b^{i \alpha} a_{\alpha}^{jr} \quad \text{(Frobenius condition)}.
    \end{aligned}
\end{equation}
The dual $\mathfrak a^*$ has a natural structure of an affine space $\R^n$ with $u^i\simeq e^i$ being coordinates on $\mathfrak a^*\simeq \R^n$. Thus, on $\mathfrak a^*$ we can introduce the contravariant metric
$g^{\alpha\beta}(u) = b^{\alpha\beta} + a_s^{\alpha\beta} u^s$
which is known to be flat (e.g. \cite[Lemma 4.1]{Andrey}; the result also follows from  \cite{balnov}).
  What is special here is not the metric $g$ itself, but the coordinate system $u^1,\dots, u^n$ which establishes a relationship between $g$ and the Frobenius algebra $\mathfrak a$. This leads us to

\begin{Definition}\label{def:frobcoord}
{\rm
Let $g$ be a flat metric.   We say that $u^1,\dots, u^n$ is a \emph{Frobenuis coordinate system} for $g$ if 
\begin{equation}
\label{eq:Frobcoord}
g^{\alpha\beta} (u) = b^{\alpha\beta} + a_s^{\alpha\beta} u^s,
\end{equation}
where $a_s^{\alpha\beta}$ are structure constants of a certain Frobenius algebra $\mathfrak a$ and $b=(b^{\alpha\beta})$ is a (perhaps degenerate)
Frobenius form for $\mathfrak a$  .
}\end{Definition}

Frobenius coordinates possess the following important property that can be easily checked.

\begin{Fact}[see \cite{balnov} and \cite{Andrey}]
\label{fact:4}
Let $g$ be a contravariant metric and  $u^1, \dots, u^n$ a coordinate system. The following two conditions are equivalent:
\begin{enumerate}
\item  In coordinates $u^1, \dots, u^n$,  the contravariant Christoffel symbols $\Gamma^{\alpha \beta}_s$ of $g$ are constant and symmetric in upper indices. 
 
\item   $u^1, \dots, u^n$ are Frobenius coordinates, i.e.,  $g$ is given by \eqref{eq:Frobcoord}.
\end{enumerate}
If either of these conditions holds, then $g$ is flat and $\Gamma^{\alpha\beta}_s = -\frac{1}{2} \frac{\partial g^{\alpha\beta}}{\partial s}$.
\end{Fact}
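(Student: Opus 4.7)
The plan is to read off from the Levi-Civita formalism the explicit algebraic identities relating the coefficients of $g$ in the chosen coordinate system to the structure constants of a (would-be) Frobenius algebra. Written contravariantly, the metric-compatibility and torsion-free conditions characterising the Levi-Civita connection are
\begin{equation*}
\partial_s g^{\alpha\beta} = -\Gamma^{\alpha\beta}_s - \Gamma^{\beta\alpha}_s, \qquad g^{s\nu}\Gamma^{\mu\beta}_s = g^{s\mu}\Gamma^{\nu\beta}_s,
\end{equation*}
and the whole proof is a manipulation of these two identities.

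For $(1)\Rightarrow(2)$, the symmetry hypothesis $\Gamma^{\alpha\beta}_s=\Gamma^{\beta\alpha}_s$ reduces metric compatibility to $\partial_s g^{\alpha\beta}=-2\Gamma^{\alpha\beta}_s$, giving simultaneously the formula $\Gamma^{\alpha\beta}_s=-\tfrac{1}{2}\partial_s g^{\alpha\beta}$ and, by integration of a constant right-hand side, the affine expansion $g^{\alpha\beta}(u)=b^{\alpha\beta}+a^{\alpha\beta}_s u^s$ with $a^{\alpha\beta}_s:=-2\Gamma^{\alpha\beta}_s$ and $b^{\alpha\beta}:=g^{\alpha\beta}(0)$. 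The coefficients $a^{\alpha\beta}_s$ are symmetric in $\alpha,\beta$ by assumption, so they define a commutative algebra structure. Substituting this affine form into the torsion-free identity and matching constant and linear-in-$u^t$ coefficients produces exactly the Frobenius identity $b^{\alpha r}a^{ij}_\alpha=b^{i\alpha}a^{jr}_\alpha$ and the associativity relation $a^{ij}_\alpha a^{\alpha r}_k=a^{jr}_\alpha a^{i\alpha}_k$ from \eqref{constants}, identifying $u^1,\dots,u^n$ as Frobenius coordinates. The converse $(2)\Rightarrow(1)$ runs in reverse: declaring $\Gamma^{\alpha\beta}_s:=-\tfrac{1}{2} a^{\alpha\beta}_s$ gives constants symmetric in the upper indices, metric compatibility holds by inspection, and torsion-freeness is verified by the same constant-plus-linear decomposition, now reading the Frobenius and associativity relations in the opposite direction. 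Uniqueness of the Levi-Civita connection then identifies these constants with the contravariant Christoffel symbols of $g$.

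It remains to establish flatness. Having shown that $g$ has the affine form \eqref{eq:Frobcoord} with associative $a$ and Frobenius $b$, one substitutes the constant $\Gamma^{\alpha\beta}_s=-\tfrac{1}{2} a^{\alpha\beta}_s$, the inverse-derivative formula $\partial_m g_{ja}=-g_{jp}a^{pq}_m g_{qa}$, and $\Gamma^\beta_{ps}=g_{p\alpha}\Gamma^{\alpha\beta}_s$ into the Riemann curvature tensor and checks termwise that it vanishes as a polynomial identity in $a$, $b$, and $u$; associativity and the Frobenius condition are precisely the two relations needed to collapse all terms. This calculation is routine and already worked out in \cite{balnov}, so one may simply cite it. The main obstacle is the middle step: recognising that when the torsion-free identity is expanded against the affine metric $b+a\cdot u$, its constant part becomes the Frobenius condition and its linear part becomes associativity (up to one or two uses of the commutativity of $\star$). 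Once this dictionary is in place, both directions of the equivalence, the explicit formula for $\Gamma^{\alpha\beta}_s$, and the flatness conclusion follow essentially mechanically.
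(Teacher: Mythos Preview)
The paper does not supply a proof of Fact~\ref{fact:4}; it is stated as a known result with citations to \cite{balnov} and \cite{Andrey} and the remark that it ``can be easily checked.'' Your proposal fills in those details correctly and along the expected lines: the contravariant metric-compatibility identity $\partial_s g^{\alpha\beta}=-\Gamma^{\alpha\beta}_s-\Gamma^{\beta\alpha}_s$ together with the upper-index symmetry yields both the affine form of $g$ and the formula $\Gamma^{\alpha\beta}_s=-\tfrac12\partial_s g^{\alpha\beta}$, and the torsion-free identity $g^{s\nu}\Gamma^{\mu\beta}_s=g^{s\mu}\Gamma^{\nu\beta}_s$, expanded against $g=b+a\cdot u$, splits into the Frobenius condition (constant part) and associativity (linear part), up to uses of commutativity. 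The converse and the flatness claim are handled appropriately, the latter by citation, which matches what the paper itself does.
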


The relation of Frobenius coordinate systems to our problem is established by the following remarkable and fundamental  statement:

\begin{Fact}[$\mbox{\cite[Theorem 2.2]{Andrey}}$]
\label{fact:5}  
Let $g$ and $h$ be two flat metrics.
Then  $\Pthree_h+\Pone_g$ is  Poisson 
    if and only if there exists  a coordinate system $u^1,\dots, u^n$ such that the following holds:  
\begin{enumerate}

\item  
$g^{\alpha\beta} (u) = b^{\alpha\beta} + a_s^{\alpha\beta} u^s,$
where $a_s^{\alpha\beta}$ are structure constants of a certain Frobenius algebra $\mathfrak a$, and $b$ is a Frobenius form for $\mathfrak a$;

\item  the entries $h^{\alpha \beta}$ of $h$ in this coordinate system are constant;

\item $h=\bigl(h^{\alpha\beta}\bigr)$  is a Frobenius form for $\mathfrak a$, that is,   $h^{\alpha q} a^{\beta \gamma}_q = h^{\gamma q} a^{\beta \alpha}_q$.

\end{enumerate}
\end{Fact}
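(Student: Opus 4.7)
My strategy is to work in a distinguished coordinate system in which the condition that $\Pthree_h + \Pone_g$ be Poisson becomes a manageable mixture of algebraic and differential identities on the coefficients of $g$ and $h$. The natural choice is a Darboux coordinate system for $h$, i.e.,\ flat coordinates in which $h^{\alpha\beta}$ is constant and $\Pthree_h$ reduces to the simple form $h^{\alpha\beta} D^3$ as in \eqref{eq:intro:7}. Since both sides of the equivalence are coordinate-invariant statements about the pair $(g, h)$, it suffices to prove equivalence after fixing such coordinates and then reinterpret the outcome geometrically.

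For the forward direction, I would write the Jacobi identity for the non-homogeneous bracket $\Pthree_h + \Pone_g$ as a single differential identity on trivector-valued expressions, and split it by total differential order. Because $\Pthree_h$ has order $3$ and $\Pone_g$ has order $1$, the Jacobiator decomposes into homogeneous pieces of orders $0$ through $6$ that must vanish independently: the purely cubic piece (Jacobi for $\Pthree_h$ alone, automatic in Darboux coordinates since $h^{\alpha\beta}$ is constant), the purely linear piece (Jacobi for $\Pone_g$ alone, equivalent to flatness of $g$ by Dubrovin--Novikov), and the mixed cross-terms that encode the actual compatibility. The highest mixed order compares the constants $h^{\alpha\beta}$ with the contravariant Christoffel symbols $\Gamma^{\alpha\beta}_s(u)$ of $g$; exploiting nondegeneracy of $h$ to invert these relations should force $\Gamma^{\alpha\beta}_s$ to be independent of $u$ and symmetric in its upper indices $\alpha,\beta$. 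By Fact~\ref{fact:4} this already exhibits $u^1,\dots, u^n$ as Frobenius coordinates with $g^{\alpha\beta}(u) = b^{\alpha\beta} + a^{\alpha\beta}_s u^s$ and $a^{\alpha\beta}_s = -2\Gamma^{\alpha\beta}_s$. Flatness of $g$ then collapses to associativity of the constants $a^{\alpha\beta}_s$, symmetry in the upper indices gives commutativity, so $a^{\alpha\beta}_s$ are structure constants of a commutative associative algebra $\mathfrak a$. The remaining lower-order cross-compatibility conditions yield precisely the two Frobenius identities: the leading one is $h^{\alpha q} a_q^{\beta\gamma} = h^{\gamma q} a_q^{\beta\alpha}$ (condition (3)) and the next one is the analogous identity for $b$, meaning that $b$ is itself a Frobenius form for $\mathfrak a$.

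For the converse, I would assume (1)--(3), again work in the coordinates $u^1,\dots, u^n$ where $h^{\alpha\beta}$ is constant and $g^{\alpha\beta}$ is affine, and verify that the same order-by-order identity, now read in reverse, holds term by term: Jacobi for $\Pthree_h$ is immediate, Jacobi for $\Pone_g$ follows from flatness of $g$ (itself a consequence of associativity and commutativity of $a^{\alpha\beta}_s$ together with the Frobenius form property of $b$, cf.\ Fact~\ref{fact:4}), and the mixed condition reduces to the Frobenius identities we have assumed for $h$ and for $b$ relative to $\mathfrak a$. The main obstacle is the order-by-order bookkeeping in the forward direction: one must carefully track how the non-constant coefficients of $\Pone_g$, namely $\Gamma^{\alpha\beta}_\gamma u^\gamma_x$ and its $D$-derivatives, interact with the constant symbol $h^{\alpha\beta} D^3$, and show at each stage that the nondegeneracy of $h$ permits the inversions needed to pass from a differential identity involving $\Gamma$ and its $u$-derivatives to a purely algebraic conclusion about the constants $b^{\alpha\beta}$ and $a^{\alpha\beta}_s$ and the Frobenius structure they define.
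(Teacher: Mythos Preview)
The paper does not prove this statement: it is stated as a cited result (Theorem~2.2 of \cite{Andrey}), with the remark that the `if' direction also follows from \cite{str} and \cite[Theorem~5.12]{dorfman}. So there is no in-paper proof to compare against.

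Your plan is the right one and matches the natural route to such a result: pass to flat coordinates for $h$, where $\Pthree_h = h^{\alpha\beta} D^3$ with constant coefficients, and analyse the Schouten bracket $[\Pthree_h,\Pone_g]$ by differential degree. A couple of points to tighten. First, since $h$ and $g$ are assumed flat, $\Pthree_h$ and $\Pone_g$ are already Poisson individually, so the entire content of the Jacobi identity is the single cross condition $[\Pthree_h,\Pone_g]=0$; your orders $0$ through $6$ framing is a bit misleading here. Second, the very top degree of that cross term involves $g^{\alpha\beta}$ itself (the principal symbol of $\Pone_g$), not $\Gamma^{\alpha\beta}_s$; at that level one gets a relation of the type $h^{\alpha q}\,\partial_q g^{\beta\gamma}$ symmetric under $\alpha\leftrightarrow\gamma$, which via $\partial_s g^{\alpha\beta} = -\Gamma^{\alpha\beta}_s - \Gamma^{\beta\alpha}_s$ feeds into the next stage. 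It is only after combining the top two degrees and using nondegeneracy of $h$ that one isolates the constancy of $\Gamma^{\alpha\beta}_s$ and its symmetry in $\alpha,\beta$, at which point Fact~\ref{fact:4} takes over. Once $g$ is affine with constant symmetric $\Gamma^{\alpha\beta}_s = -\tfrac12 a^{\alpha\beta}_s$, the remaining degrees collapse exactly to the Frobenius conditions on $h$ and $b$ as you describe. So the architecture is sound; the only real work you have not yet displayed is the explicit bookkeeping at the top two degrees that produces ``$\Gamma$ constant and symmetric'' from the raw differential identity.
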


This fact was independently obtained by P.\,Lorenzoni and R.\,Vitolo in their unpublished paper.  The `if' part of the statement follows from   \cite{str}  by I.\,Strachan and B.\,Szablikowski, see also \cite[Theorem 5.12]{dorfman}.

The coordinates $(u^1,\dots ,u^n)$ from Fact \ref{fact:5} will be called {\it Frobenius coordinates} for the nonhomogeneous Poisson structure $\Pthree_h+\Pone_g$. 
Of course, Frobenius coordinates are not unique; indeed, they remain to be Frobenius after any affine coordinate change.   
This is the only freedom  since the components of $h$  are constant in Frobenius coordinates.

\subsection{Reduction of our problem to an algebraic one and Frobenius pencils}\label{subsect:2.2}

\begin{Definition}{\rm 
 Let $(\mathfrak a, \star)$ and $(\bar{\mathfrak a},\bar\star)$ be Frobenius algebras defined on the same vector space $V$ and  $h, \bar h: V\times V \to \R$ the corresponding Frobenius forms.  We will say that $(\mathfrak a, h)$ and $(\bar{\mathfrak a}, \bar h)$ are  \emph{
 compatible}
 if the operation 
\begin{equation}
\label{eq:bols11}
\xi, \eta  \mapsto  \xi\star \eta + \xi \, \bar\star \, \eta, \qquad  \xi,\eta\in V,
\end{equation}
defines the structure of a Frobenius algebra with the Frobenuis form $h + \bar h$. 

Similarly,   if  $\mathfrak a$ and $\bar {\mathfrak a}$ are Frobenius algebras each of which is endowed with  two Frobenius forms $b, h$ and $\bar b, \bar h$ respectively, then we say that the triples $(\mathfrak a, b, h)$ and $(\bar{\mathfrak a}, \bar b, \bar h)$ are \emph{ compatible}  if  \eqref{eq:bols11} defines a Frobenius algebra for which $b+\bar b$ and $h + \bar h$ are both Frobenuis forms.  
}\end{Definition}

Formally, the definition requires that  $b + \bar b$ and  $h + \bar h$ are  nondegenerate. It is not essential. Indeed, if the operations 
  $\star$ and $\bar \star$ are associative, and also 
	the operation $\widehat \star:=\star+ \bar \star$  given by   \eqref{eq:bols11}
	is  associative, then any linear combination $ \lambda \star+ \bar \lambda\bar \star$ is associative. Moreover, if 
	$\widehat b:= b + \bar b$, possibly degenerate, 
		satisfies the condition \eqref{frobenius} for $\widehat \star$, 
		then the linear combination  $\lambda b + \bar \lambda \bar b$ also satisfies the condition \eqref{frobenius} with respect to $ \lambda \star+ \bar \lambda\bar \star$. Thus, passing to a suitable linear combination we can  make $\widehat b$ and also $\widehat h$   nondegenerate.

 In view of  Facts \ref{fact:4} and  \ref{fact:5},  compatible Frobenius triples 
$(\mathfrak a, b, h)$ and $(\bar{\mathfrak a}, \bar b, \bar h)$ naturally define compatible Poisson structures $\Pthree_g+\Pone_h$ and  $\Pthree_{\bar g}+\Pone_{\bar h}$.   The next theorem shows that the converse is also true under the assumption that $R_h= \bar h h^{-1}$ has $n$  different eigenvalues.

\begin{Theorem}
\label{thm:frobenius0}
Consider two non-homogeneous Poisson structures $\Pthree_h + \Pone_g$ and $\Pthree_{\bar h} + \Pone_{\bar g}$ and suppose that 
$R_h= \bar h h^{-1}$ has $n$  different eigenvalues. 

Then,  they are compatible if and only if $(g, h)$ and $(\bar g, \bar h)$ admit a common Frobenius coordinate system $u^1,\dots, u^n$ in which
\begin{enumerate}
\item $h^{\alpha\beta}$ and $\bar h^{\alpha\beta}$ are constant,
\item $g^{\alpha\beta}(u) = b^{\alpha\beta} +  a^{\alpha\beta}_s u^s$ and  $\bar g^{\alpha\beta}(u) = \bar b^{\alpha\beta} +  \bar a^{\alpha\beta}_s u^s$,
\item $(\mathfrak a, b, h)$ and $(\bar{\mathfrak a}, \bar b, \bar h)$ are compatible Frobenius triples  (here $\mathfrak a$ and $\bar{\mathfrak a}$ denote the algebras with structure constants $a^{\alpha\beta}_s$ and $\bar a^{\alpha\beta}_s$ respectively).
\end{enumerate}
\end{Theorem}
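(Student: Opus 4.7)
For the \emph{if} direction, my plan is to evaluate any linear combination $\lambda(\Pthree_h+\Pone_g)+\mu(\Pthree_{\bar h}+\Pone_{\bar g})$ in the postulated common Frobenius chart $u$. There the Christoffel symbols of $g,\bar g$ are constant and symmetric, and $h,\bar h$ are constant, so the sum literally reads as $\Pthree_{\lambda h+\mu\bar h}+\Pone_{\lambda g+\mu\bar g}$ with combined structure constants $\lambda a+\mu\bar a$ and forms $\lambda b+\mu\bar b$, $\lambda h+\mu\bar h$. The compatibility of the Frobenius triples $(\mathfrak a,b,h)$ and $(\bar{\mathfrak a},\bar b,\bar h)$ is precisely what is needed to invoke Fact \ref{fact:5} for this linear combination, giving the Poisson property and hence compatibility.

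For the \emph{only if} direction, I would first invoke Fact \ref{fact:1} to get pairwise compatibility of the homogeneous pieces, then apply Fact \ref{fact:5} to $\Pthree_h+\Pone_g$ to fix Frobenius coordinates $u^1,\dots,u^n$ in which $h^{\alpha\beta}$ is constant and $g^{\alpha\beta}(u)=b^{\alpha\beta}+a_s^{\alpha\beta}u^s$ with $a_s^{\alpha\beta}$ the structure constants of a Frobenius algebra $\mathfrak a$ and Frobenius form $b$. The central step is to arrange, via an affine reparametrization that keeps $u$ Frobenius for $(g,h)$, that $\bar h$ is also constant in these coordinates. Here I would combine Fact \ref{fact:2}(ii) and Fact \ref{fact:3} to view $R_h=\bar h h^{-1}$ as a Nijenhuis operator and, using the $n$-distinct-eigenvalue assumption together with the classification of compatible third-order Darboux--Poisson brackets from \cite{doyle}, produce a coordinate chart that is simultaneously Darboux for $h$ and $\bar h$; since any two Darboux charts for the flat metric $h$ differ by an affine transformation, relabeling $u$ by this common chart preserves the Frobenius property for $(g,h)$ and makes $\bar h$ constant.

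Once $h$ and $\bar h$ are simultaneously constant in $u$, I would apply Fact \ref{fact:5} to $\Pthree_{\bar h}+\Pone_{\bar g}$ to produce its Frobenius coordinates $\bar u$; these also make $\bar h$ constant, so $u\mapsto\bar u$ is affine, and constancy plus symmetry of contravariant Christoffels are preserved under such changes, so Fact \ref{fact:4} forces $\bar g^{\alpha\beta}(u)=\bar b^{\alpha\beta}+\bar a_s^{\alpha\beta}u^s$ with $\bar a_s^{\alpha\beta}$ structure constants of a Frobenius algebra $\bar{\mathfrak a}$ and Frobenius form $\bar b$. To close, I would apply Fact \ref{fact:5} to the Poisson linear combination $\Pthree_{\lambda h+\mu\bar h}+\Pone_{\lambda g+\mu\bar g}$ in the chart $u$: by Dubrovin's additivity of contravariant Christoffels (Fact \ref{fact:2}(i)), the Christoffels of $\lambda g+\mu\bar g$ are still constant and symmetric there, so Fact \ref{fact:5} identifies $\lambda a+\mu\bar a$ as structure constants of a Frobenius algebra admitting both $\lambda b+\mu\bar b$ and $\lambda h+\mu\bar h$ as Frobenius forms; the choice $(\lambda,\mu)=(1,1)$ is exactly the claimed compatibility of triples.

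I expect the main obstacle to be the construction of the common Darboux chart for $h$ and $\bar h$: it is at this point that the $n$-distinct-eigenvalues hypothesis on $R_h$ is indispensable and one must extract genuine information beyond the pointwise Poisson compatibility of the two metrics. Everything downstream of that step is bookkeeping with affine changes of coordinates and linear combinations of Frobenius data.
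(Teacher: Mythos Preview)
Your overall architecture matches the paper's proof: the ``if'' direction via Fact~\ref{fact:5} applied to linear combinations, and the ``only if'' direction by first establishing common Darboux coordinates for $h,\bar h$, then reading off the Frobenius data for $g,\bar g$ in that chart, then applying Fact~\ref{fact:5} to the sum. The later bookkeeping steps (affine changes preserve Frobenius coordinates, additivity of contravariant Christoffels, etc.) are fine and essentially what the paper does.

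The place where your sketch is too thin is precisely the step you flag as the main obstacle. You write that the common Darboux chart for $h$ and $\bar h$ comes from ``the classification of compatible third-order Darboux--Poisson brackets from \cite{doyle}'' together with the distinct-eigenvalue assumption. But Doyle's results, as used here, only give you that $\mathcal B_h+\mathcal B_{\bar h}=\mathcal B_{h+\bar h}$ is again Darboux--Poisson and that $h,\bar h$ are Poisson compatible; they do \emph{not} directly yield simultaneous constancy. The paper closes this gap by an explicit obstruction-tensor argument: set $S^{\beta}_{rq}=\Gamma^\beta_{rq}-\bar\Gamma^\beta_{rq}$; from Poisson compatibility one gets $S^\beta_{pq}R^q_s=R^q_pS^\beta_{qs}$; then, comparing the $D^2$-coefficients in the identity $\mathcal B_h+\mathcal B_{\bar h}=\mathcal B_{h+\bar h}$ and manipulating, one derives
\[
S^{\alpha}_{qr}\,h^{pq}\,S^{\beta}_{ps}\,\bar h^{s\gamma}=0.
\]
Only at this point does the $n$-distinct-eigenvalue hypothesis on $R_h$ enter: passing to the diagonal chart for $R_h$ (where $h,\bar h$ are diagonal and $S^\beta_{pq}$ is diagonal in $p,q$ for each $\beta$), the above relation with $\alpha=\beta$ forces $S\equiv 0$, hence $h$ and $\bar h$ share flat coordinates. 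You should expect to need exactly this computation; it is not a consequence of the cited structural theorems alone.
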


\begin{Corollary}{
In more explicit terms,   compatibility of $\Pthree_h + \Pone_g$ and $\Pthree_{\bar h} + \Pone_{\bar g}$  such that 
$R_h= \bar h h^{-1}$ has $n$  different eigenvalues, is equivalent to  reducibility of these operators, in an appropriate coordinate system $u^1,\dots, u^n$,  to the following simultaneous canonical form  
$$
    \Pthree_h + \Pone_g= h^{\alpha \beta} D^3 + b^{\alpha \beta} D +  a^{\alpha \beta}_s u^s D + \tfrac{1}{2}\, a^{\alpha \beta}_s u^s_{x},
    $$
    $$
    \Pthree_{\bar h} + \Pone_{\bar g} = \bar h^{\alpha \beta} D^3 + \bar b^{\alpha \beta} D +  \bar a^{\alpha \beta}_s u^s D + \tfrac{1}{2}\, \bar a^{\alpha \beta}_s u^s_{x},
    $$
where $h^{\alpha \beta}, \bar h^{\alpha \beta}, b^{\alpha \beta}, \bar b^{\alpha \beta}, a^{\alpha \beta}_s, \bar a^{\alpha \beta}_s$ are constants  symmetric in upper indices and satisfying the conditions:
\begin{equation}
\label{eq:r12}
\begin{aligned}
    & a^{\alpha \beta}_q a^{q \gamma}_s = a^{\gamma \beta}_q a^{q \alpha}_s, \quad \bar a^{\alpha \beta}_q \bar a^{q \gamma}_s = \bar a^{\gamma \beta}_q \bar a^{q \alpha}_s, \quad \bar a^{\alpha \beta}_q a^{q \gamma}_s + a^{\alpha \beta}_q \bar a^{q \gamma}_s = \bar a^{\gamma \beta}_q a^{q \alpha}_s + a^{\gamma \beta}_q \bar a^{q \alpha}_s, \\
    & h^{\alpha q} a^{\beta \gamma}_q = h^{\gamma q} a^{\beta \alpha}_q, \quad b^{\alpha q} a^{\beta \gamma}_q = b^{\gamma q} a^{\beta \alpha}_q, \quad  \bar h^{\alpha q} \bar a^{\beta \gamma}_q = \bar h^{\gamma q} \bar a^{\beta \alpha}_q, \quad \bar b^{\alpha q} \bar a^{\beta \gamma}_q = \bar b^{\gamma q} \bar a^{\beta \alpha}_q, \\
    & \bar h^{\alpha q} a^{\beta \gamma}_q + h^{\alpha q} \bar a^{\beta \gamma}_q = \bar h^{\gamma q} a^{\beta \alpha}_q + h^{\gamma q} \bar a^{\beta \alpha}_q, \quad \bar b^{\alpha q} a^{\beta \gamma}_q + b^{\alpha q} \bar a^{\beta \gamma}_q = \bar b^{\gamma q} a^{\beta \alpha}_q + b^{\gamma q} \bar a^{\beta \alpha}_q.
\end{aligned}
\end{equation}
}\end{Corollary}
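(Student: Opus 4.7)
The plan is to deduce this Corollary as an explicit unpacking of Theorem~\ref{thm:frobenius0} in coordinates, using Fact~\ref{fact:4} to convert the constant structure constants of the Frobenius algebras into the component form of the Poisson structures. Almost all the work is algebraic bookkeeping; the structural content has already been established.

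First, I would invoke Theorem~\ref{thm:frobenius0} (with the hypothesis that $R_h=\bar h h^{-1}$ has $n$ distinct eigenvalues) to choose a common Frobenius coordinate system $u^1,\dots,u^n$ in which simultaneously $h^{\alpha\beta},\bar h^{\alpha\beta}$ are constants, $g^{\alpha\beta}(u)=b^{\alpha\beta}+a^{\alpha\beta}_s u^s$, and $\bar g^{\alpha\beta}(u)=\bar b^{\alpha\beta}+\bar a^{\alpha\beta}_s u^s$, with $(\mathfrak a,b,h)$ and $(\bar{\mathfrak a},\bar b,\bar h)$ being compatible Frobenius triples.

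Next I would specialise formulas \eqref{eq:intro:6} and \eqref{intro:third} to these coordinates. Since $h$ has constant components in Frobenius coordinates, its Levi-Civita connection vanishes identically; hence all contravariant Christoffel symbols in \eqref{intro:third} vanish and $\mathcal B_h$ collapses to $h^{\alpha\beta}D^3$. For $\mathcal A_g$, Fact~\ref{fact:4} yields $\Gamma^{\alpha\beta}_s=-\tfrac12\partial g^{\alpha\beta}/\partial u^s=-\tfrac12 a^{\alpha\beta}_s$, and substitution into \eqref{eq:intro:6} gives
\[
\mathcal A_g \;=\; b^{\alpha\beta}D+a^{\alpha\beta}_s u^s D+\tfrac12 a^{\alpha\beta}_s u^s_x.
\]
The same computation applied to the barred metrics yields the corresponding canonical form for $\mathcal B_{\bar h}+\mathcal A_{\bar g}$. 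Summing, one obtains exactly the expressions displayed in the Corollary.

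Finally, I would translate the conditions that $(\mathfrak a,b,h)$ and $(\bar{\mathfrak a},\bar b,\bar h)$ are compatible Frobenius triples into the system \eqref{eq:r12}. The symmetry $a^{\alpha\beta}_s=a^{\beta\alpha}_s$ (commutativity) is the ``symmetric in upper indices'' clause. Associativity of $\star$ and of $\bar\star$ gives the first two relations of the first line of \eqref{eq:r12}; associativity of $\star+\bar\star$ expands bilinearly and, after cancelling the pure $\star\star$ and $\bar\star\bar\star$ contributions, gives the third (cross-term) relation. The Frobenius identities \eqref{frobenius} for the forms $b,h$ w.r.t.~$\star$ and $\bar b,\bar h$ w.r.t.~$\bar\star$ give the second line; the Frobenius identities for $b+\bar b$ and $h+\bar h$ w.r.t.~$\star+\bar\star$ give, after the same cancellation, the third line. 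For the converse, given constants satisfying \eqref{eq:r12} one reads off two Frobenius triples that are compatible in the sense of Definition in Section~\ref{subsect:2.2}, so the reverse implication of Theorem~\ref{thm:frobenius0} returns compatibility of the Poisson structures.

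The only real obstacle is the bookkeeping in the last step, where one must carefully match index symmetrizations (for instance rewriting $a^{\alpha\beta}_q a^{q\gamma}_s = a^{\gamma\beta}_q a^{q\alpha}_s$ in the associativity-plus-commutativity form that appears in \eqref{eq:r12}); this is a mechanical but tedious check rather than a conceptual difficulty.
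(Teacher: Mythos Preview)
Your proposal is correct and matches the paper's (implicit) argument: the Corollary is presented in the paper immediately after Theorem~\ref{thm:frobenius0} with the words ``In more explicit terms'' and is given no separate proof, precisely because it is the straightforward coordinate unpacking you describe --- specialising \eqref{eq:intro:6} and \eqref{intro:third} to Frobenius coordinates via Fact~\ref{fact:4}, and reading off \eqref{eq:r12} from the compatible Frobenius triples.
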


Notice that the coordinates $u^1,\dots, u^n$ from Theorem \ref{thm:frobenius0} are just flat coordinates for $h$  (or equivalently, for $\bar h$ as these metrics have common flat coordinates  by Theorem \ref{thm:frobenius0}).

We see that Theorem \ref{thm:frobenius0} reduces the problem of description and classification of pairs of compatible 
 Poisson structures   $\Pthree_h+ \Pone_g$ and $\Pthree_{\bar h}+ \Pone_{\bar g}$ such that $R_g=\bar h h^{-1}$ has $n$ different eigenvalues to a purely algebraic problem. 
As we announced above, we will reformulate it in differential geometric terms in Section \ref{subsect:6.1}, and solve it under the assumption 
that $R_g=\bar g g^{-1}$ has $n$ different eigenvalues.

We have not succeeded in solving the problem by purely algebraic means. Like many other problems in Algebra, it reduces to   a system of quadratic and linear equations (see relations \eqref{eq:r12}).  For example,  classification of Frobenius algebras is a problem of the same type. This  problem is solved under the additional assumption that the Frobenius form is positive definite in \cite{Draisma}, and in our opinion  is out of reach otherwise. Of course, for a fixed dimension one can find complete 
 or partial  answers. In particular, in \cite{poon} it is shown, that up to dimension 6 there is a finite number of isomorphism classes of commutative associative algebras and for $n > 6$ the number of classes is infinite. In \cite{kay} the classification of nilpotent commutative associative algebras up to dimension 6 is given.
See also \cite{lor,str}.

In the situation discussed in Theorem \ref{thm:frobenius0}, consider the pencil of first order Poisson structures $\Pone_{\lambda g + \mu \bar g}$, which is sometimes referred to as quasiclassical limit \cite{Ferapontov-Pavlov} of the non-homogeneous pencil $\Pthree_{\lambda h+\mu \bar h} + \Pone_{\lambda g + \mu \bar g}$. We can ask the inverse question: {\it Given a flat pencil $\{ \lambda g + \mu \bar g\}$, does the corresponding Poisson pencil $\{\Pone_{\lambda g + \mu \bar g}\}$  admit a perturbation with nondegenerate Darboux-Poisson structures of order three of general position?}

Theorem \ref{thm:frobenius0} basically shows that the main condition for the related quadruple of metrics $(h, \bar h, g, \bar g)$ is the existence of a common Frobenius coordinate system for $g$ and $\bar g$.  Indeed, if this condition holds true and this Frobenuis coordinate system is given, then the other two metrics $h$ and $\bar h$ can be `reconstructed' by solving a system of linear equations.  More precisely, we have the following

\begin{Theorem} 
\label{prop:bols13}
Let $g$ and $\bar g$ be Poisson-compatible flat metrics that admit a common Frobenius coordinate system $u^1,\dots, u^n$, that is
$$
g^{\alpha\beta}(u) =b^{\alpha\beta} +  a^{\alpha\beta}_s u^s \quad\mbox{and}\quad   \bar g^{\alpha\beta}(u) = \bar b^{\alpha\beta} +  \bar a^{\alpha\beta}_s u^s,
$$
where $(\mathfrak a, b)$ and $(\bar{\mathfrak a}, \bar b)$ are Frobenius pairs  (here $\mathfrak a$ and $\bar{\mathfrak a}$ denote the algebras with structure constants $a^{\alpha\beta}_s$ and $\bar a^{\alpha\beta}_s$ respectively).
Then
\begin{itemize}

\item[(i)] the corresponding Frobenius algebras are compatible,

\item[(ii)]  there exist nondegenerate metrics $h$ and $\bar h$  (with $h^{\alpha\beta}$ and $\bar h^{\alpha\beta}$ being constant in coordinates $u^1,\dots,u^n$),  such that $\Pthree_h + \Pone_g$ and $\Pthree_{\bar h} + \Pone_{\bar g}$ are compatible Poisson structures,

\item[(iii)]  in Frobenius coordinates $u^1,\dots, u^n$, the (constant) metrics $h$ and $\bar h$ can always be chosen in the form \begin{equation}
\label{eq:condforh}
h^{\alpha\beta} = m^0 \, b^{\alpha\beta} +  a^{\alpha\beta}_s m^s \quad\mbox{and}\quad   \bar h^{\alpha\beta}(u) = m^0 \, \bar b^{\alpha\beta} +  \bar a^{\alpha\beta}_s m^s, \quad (m^1,\dots, m^n)\in\R^n,  \  m^0\in\R.
\end{equation}
\end{itemize}
\end{Theorem}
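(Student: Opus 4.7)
The plan is to construct $h$ and $\bar h$ directly via the ansatz \eqref{eq:condforh} and then verify, for every $(\alpha, \beta) \in \R^2$, that the non-homogeneous bracket $\Pthree_{\alpha h + \beta \bar h} + \Pone_{\alpha g + \beta \bar g}$ satisfies the hypotheses of Fact \ref{fact:5} in the coordinates $u^1, \dots, u^n$. Compatibility in (ii) will then be immediate from the definition of Poisson-compatibility, and (i) will appear as a byproduct of the same analysis.

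I would first prove (i). By Poisson compatibility of $g$ and $\bar g$ (Definition \ref{def:compmetrics}), the contravariant Christoffel symbols of $g + \bar g$ are $\Gamma + \bar\Gamma$; in the common Frobenius coordinate system both summands are constant and symmetric in the upper indices, so the same holds for their sum. Fact \ref{fact:4} then gives that $u^1, \dots, u^n$ is automatically a Frobenius coordinate system for $g + \bar g$, with
\[
(g + \bar g)^{\alpha\beta}(u) = (b + \bar b)^{\alpha\beta} + (a + \bar a)^{\alpha\beta}_s u^s.
\]
Hence $a + \bar a$ are structure constants of a commutative associative algebra, i.e.\ the algebras $\mathfrak a$ and $\bar{\mathfrak a}$ are compatible, and $b + \bar b$ is a (perhaps degenerate) Frobenius form for it; this is exactly claim (i).

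For (ii) and (iii), I would define $h$ and $\bar h$ by \eqref{eq:condforh} and first address nondegeneracy: at $(m^0, m) = (1, 0)$ these reduce to $b$ and $\bar b$, which are nondegenerate, so the loci $\{\det h = 0\}$ and $\{\det \bar h = 0\}$ are proper algebraic subvarieties of $\R^{n+1}$ avoidable by a generic choice of $(m^0, m)$. The central algebraic step is to check that a form of the shape $h^{\alpha\beta} = m^0 b^{\alpha\beta} + a^{\alpha\beta}_s m^s$ is a Frobenius form for $\mathfrak a$. By linearity it suffices to handle the two pieces separately: for $m^0 b$ this is the assumption on $b$, and for $a^{\alpha\beta}_s m^s$ one computes, using associativity of $\mathfrak a$ once and commutativity twice,
\[
a^{\alpha q}_s \, a^{\beta\gamma}_q \;=\; a^{\alpha\beta}_q \, a^{q\gamma}_s \;=\; a^{\beta\alpha}_q \, a^{\gamma q}_s \;=\; a^{\gamma q}_s \, a^{\beta\alpha}_q,
\]
so contracting with $m^s$ yields $h^{\alpha q} a^{\beta\gamma}_q = h^{\gamma q} a^{\beta\alpha}_q$. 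The identical argument applies verbatim to $\bar h$ and $\bar{\mathfrak a}$.

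To finish, I would fix arbitrary $(\alpha, \beta) \in \R^2$ and apply Fact \ref{fact:5} to the combined data. By (i), $(\alpha a + \beta \bar a,\, \alpha b + \beta \bar b)$ are the structure constants and a Frobenius form of a commutative associative algebra, and the constant form $\alpha h + \beta \bar h = m^0 (\alpha b + \beta \bar b) + (\alpha a + \beta \bar a)^{\alpha\beta}_s m^s$ is of the very shape \eqref{eq:condforh} with respect to this combined algebra. The previous calculation, applied to the combined algebra, then shows that $\alpha h + \beta \bar h$ is a Frobenius form for it. All three conditions of Fact \ref{fact:5} hold for the triple $\bigl(\alpha g + \beta \bar g,\, \alpha h + \beta \bar h\bigr)$ in the coordinates $u^1, \dots, u^n$, so $\Pthree_{\alpha h + \beta \bar h} + \Pone_{\alpha g + \beta \bar g}$ is Poisson; as $(\alpha, \beta)$ was arbitrary, compatibility follows. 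The only delicate ingredient is the associativity identity for the form $a\cdot m$ above; once it is in place, everything else extends linearly from a single algebra to the entire pencil.
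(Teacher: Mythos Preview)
Your proof is correct and follows essentially the same route as the paper: part (i) is obtained by observing that the contravariant Christoffel symbols of $g+\bar g$ are the sum of those of $g$ and $\bar g$ (by Poisson compatibility) and then invoking Fact \ref{fact:4}, while parts (ii)--(iii) are handled by defining $h,\bar h$ via the ansatz \eqref{eq:condforh}, checking that $a^{\alpha\beta}_s m^s$ is a Frobenius form from associativity, and applying Fact \ref{fact:5} to each linear combination. The only cosmetic differences are that the paper phrases the nondegeneracy of $h,\bar h$ via genericity of $(m^0,m)$ using nondegeneracy of $g,\bar g$ rather than evaluating at $(1,0)$, and it cites the Frobenius property of the ``trivial'' form $\xi,\eta\mapsto\langle\xi\star\eta,m\rangle$ rather than writing out the associativity identity; also, you may want to avoid reusing $\alpha,\beta$ both as pencil parameters and as tensor indices in the final paragraph.
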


\subsection{AFF-pencil}\label{subsect:AFF}\label{subsect:2.3}

Consider a real affine space $V\simeq\R^n$ with coordinates $u^1, \dots, u^n$ and define the (Nijenhuis) operator $L$ and contravariant metric $g_0$ on it by:
\begin{equation}\label{norm}
    L = \left(
    \begin{array}{ccccc}
    u^1 & 1 & 0 & \dots & 0 \\
    u^2 & 0 & 1 & \dots & 0 \\
    & & & \ddots & \\
    u^{n - 1} & 0 & 0 & \dots & 1 \\
    u^n & 0 & 0 & \dots & 0 \\
    \end{array}
    \right), \quad g_0 = \left(
    \begin{array}{cccccc}
         0 & 0 & \dots & 0 & 0 & 1 \\
         0 & 0 & \dots & 0 & 1 & - u^1 \\
         0 & 0 & \dots & 1 & - u^1 & -u^2 \\
         & & \iddots & & & \\
         0 & 1 & \dots & - u^{n - 4} & - u^{n - 3} & - u^{n - 2}\\
         1 & - u^1 & \dots & - u^{n - 3} & - u^{n - 2} & - u^{n- 1} \\
    \end{array}
    \right).
\end{equation}
Next, introduce $n+1$ contravariant metrics $g_{\mathrm i} = L^i g$ for $i = 0, \dots, n$.  In matrix form, we have 
\begin{equation}\label{p3}
    g_{\mathrm i} = \left(
    \begin{array}{cc}
         a_{\mathrm{n - i}} & 0  \\
         0 & b_{\mathrm i}
    \end{array}
    \right),
\end{equation}
where $a_{\mathrm{n - i}}$ is a $(n - i) \times (n - i)$ matrix 
\begin{equation*}
    a_{\mathrm{n - i}} = \left(
    \begin{array}{ccccc}
         0 & \dots & 0 & 0 & 1 \\
         0 & \dots & 0 & 1 & - u^1 \\
         0 & \dots & 1 & - u^1 & -u^2 \\
         & \dots & & & \\
         1 & \dots & - u^{n - i - 3} & - u^{n - i - 2} & - u^{n - i - 1} \\
    \end{array}
    \right)
\end{equation*}
and $b_{\mathrm i}$ is $i \times i$ matrix of the form
\begin{equation*}
    b_{\mathrm i} = \left(
\begin{array}{ccccc}
     u^{n - i + 1} &  u^{n - i + 2} & \dots & u^{n - 1} & u^n \\
     u^{n - i + 2} & u^{n - i + 3} & \dots & u^n & 0 \\
     & & \dots & & \\
     u^{n - 1} & u^n & \dots & 0 & 0 \\
     u^{n} & 0 & \dots & 0 & 0 \\
\end{array}
    \right).
\end{equation*}
In particular, $g_{\mathrm 0} = a_{\mathrm n}$ and $g_{\mathrm n} = b_{\mathrm n}$.

The metrics $g_{\mathrm 0}, g_{\mathrm 1}, \dots, g_{\mathrm n}$  are  flat and pairwise compatible, so that they generate an $n+1$-dimensional flat pencil with remarkable properties, see e.g. \cite{Ferapontov-Pavlov, NijenhuisAppl2}.  We can write this pencil as  
\begin{equation}
\label{eq:LCpencil}
\{ P(L) g_0\}, \quad\mbox{where $P(\cdot)$ is an arbitrary polynomial of degree $\le n$}
\end{equation} 
and $L$ and $g_0$ are given by \eqref{norm}. We will refer to it as an {\it AFF-pencil}. This pencil  was discovered, 
in the form   \eqref{norm} and \eqref{p3},   by M. Antonowicz and A. Fordy \cite{fordy}. As we see, the components  of  each metric $g_i$ are affine functions, moreover,  the coordinates  $(u^1,\dots, u^n)$ are common Frobenius coordinates for all of them.

The corresponding Frobenius algebras are easy to describe.  Consider two well-known examples:
\begin{itemize}

\item  the algebra $\mathfrak a_n$ of dimension $n$ with basis $e_1,e_2,\dots, e_n$ and relations
$$
e_i \star e_j = \begin{cases} 
e_{i+j}, \quad & \mbox{if $i+j\le n$},  \\ 
0 \quad & \mbox{otherwise}.    
\end{cases} 
$$ 
Notice that $\mathfrak a_n$ can be modelled as the matrix algebra $\operatorname{Span}(J,J^2,\dots, J^n)$, where $J$ is the nilpotent Jordan block of size $(n+1)\times (n+1)$.  It contains {\it no multiplicative unity} element.

\item  the algebra $\mathfrak b_n$ of dimension $n$ with basis $e_1,e_2,\dots, e_n$ and relations
$$
e_i \star e_j = \begin{cases} 
e_{i+j-1}, \quad & \mbox{if $i+j-1\le n$},  \\ 
0 \quad & \mbox{otherwise}.    
\end{cases} 
$$
This algebra can be understood as the {\it unital} matrix algebra $\operatorname{Span}(\Id, J, J^2,\dots, J^{n-1})$ where $J$ is the nilpotent Jordan block of size $n\times n$.   The difference from the previous example is that $\mathfrak b_n$, by definition, contains the identity matrix.  Equivalently, we can define  $\mathfrak b_n$ as the algebra of truncated polynomials $\R[x]/\langle x^n\rangle$  (similarly  $\mathfrak a_n\simeq  \langle x\rangle/ \langle x^{n+1}\rangle$).  
\end{itemize}

It is straightforward to see that the metric  $g_\mathrm{n}=b_\mathrm{n}$ is related to the Frobenius algebra $\mathfrak b_n$.  Similarly $g_0 = a_n$ is related to the Frobenius algebra $\mathfrak a_n$  (this becomes obvious if we reverse the order of basis vectors and mutiply each of them by $-1$).
Hence,  formula \eqref{p3} shows that the Frobenius algebra associated with $g_{\mathrm{i}}$ is isomorphic to the direct sum $\mathfrak a_{n-i}\oplus \mathfrak b_{i}$.

It is interesting that a generic metric  $g=P(L)g_0$ from the AFF-pencil \eqref{eq:LCpencil}, i.e. such that $P(L)$ has $n$ distinct roots,  corresponds to the direct sum $\R \oplus \dots \oplus \R \oplus \mathbb C \oplus \dots \oplus \mathbb C$, where each copy of $\R$ relates to a real root  and each copy of $\mathbb C$ relates to a pair of complex conjugate roots of $P(\cdot)$.

It is a remarkable fact that for each $g_{\mathrm{i}}$ we can find a partner $h_{\mathrm{i}}$ such that $\Pthree_{h_{\mathrm{i}}} + \Pone_{g_{\mathrm{i}}}$ is a Poisson structure and all these structures are pairwise compatible.  The (constant) metrics $h_{\mathrm{i}}$ take the form
\begin{equation}
\label{eq:bols_hi_2}
h_{\mathrm{i}} = (g_{\mathrm{i}})_{\bar m, m^0},  \quad  \bar m=(m^1,\dots, m^n)\in \R^n, m^0\in\R, 
\end{equation}
where  $(g_{\mathrm{i}})_{\bar m, m^0}$ is obtained from the matrix $g_{\mathrm{i}}(u)$ by replacing $u^s$ with $m^s$ and all $1$'s with $m^0$.  
In this way, we obtain an $(n+1)$-dimensional pencil of non-homogeneous Poisson structures generated by $\Pthree_{h_{\mathrm{i}}} + \Pone_{g_{\mathrm{i}}}$:
\begin{equation}
\label{eq:att3_1}
\left\{ \sum_{i=0}^{n}  c_i \bigl(\Pthree_{h_{\mathrm{i}}} + \Pone_{g_{\mathrm{i}}}\bigr)  \right\}_{c_i\in\R}
\end{equation}

Alternatively, the pencil \eqref{eq:att3_1} can be described as follows.  Fix $\bar m=(m^1,\dots, m^n)\in \R^n$, $m^0\in\R$ and  let $L(\bar m)$ denote the operator with constant entries obtained from $L=L(u)$ by replacing $u^i$ with constants $m^i\in\R$.  Similarly, $g_0(\bar m)$ denotes the metric with constant coefficients obtained from $g_0$ by replacing $u^i$ with the same constants $m^i\in\R$.

Then for $g = P(L) g_0$ we can define its partner $h$  (metric with constant entries) as
$$
h = m^0 P\left(L \left(\tfrac{1}{m^0} \,\bar m\right)\right) g_0\left(\tfrac{1}{m^0}\, \bar m\right)
$$
It can be easily checked that the correspondence $(m^0,m^1,\dots, m^n) \mapsto h$ defined by this formula is linear so that it makes sense for $m_0=0$ (the denominators cancel out).     Then the pencil \eqref{eq:att3_1} can be, equivalently, defined as
\begin{equation}
\label{eq:disppert_AFF}
\left\{
\Pthree_{m^0 P\left(L \left(\frac{1}{m^0} \,\bar m\right)\right) g_0\left(\frac{1}{m^0}\, \bar m\right)} + \Pone_{P(L) g_0}
\right\}_{\deg P(\cdot) \le n}.
\end{equation}

Notice that such a pencil is not unique, as the above construction depends on $n+1$ arbitrary parameters $m^0,m^1,\dots, m^n$.   
In other words, in \eqref{eq:disppert_AFF}, the polynomial $P(\cdot)$ serves as a parameter of the bracket within the AFF-pencil, whereas $(m_0, \bar m)$ parametrise dispersive perturbations of this pencil.

\begin{Remark}\label{Rem:2.1}
{\rm
For our purposes below it will be convenient to rewrite this pencil in another coordinate system by taking the eigenvalues of $L$ as local coordinates $x^1,\dots, x^n$. In these coordinates,  $g_0$ and $L$ from \eqref{norm} take the following diagonal form
(see e.g. \cite[p. 214]{Ferapontov-Pavlov} or \cite[\S 6.2]{nij1})\footnote{The letters $\mathsf{LC}$ in $g_{\LC}$ refer to Levi-Civita. The metric $g_{\LC}$ played the key role in his classification of geodesically equivalent metrics \cite{LeviCivita}. See also \cite{NijenhuisAppl2} for discussion on the relationship between geodesically equivalent and Poisson-compatible metrics.}:
\begin{equation}
\label{norm1}
g_{\LC}= \sum_{i=1}^n \left( \prod_{s\ne i} (x^i-x^s) \right)^{-1} \left( \tfrac{\partial}{\partial {x^i}} \right)^2, \ \   L= \textrm{diag}(x^1,...,x^n),
\end{equation} 
so that the AFF pencil  \eqref{eq:LCpencil} becomes diagonal too:
\begin{equation}
\label{eq:AFF_LC}
\{ P(L) g_{\LC} \}, \quad  \mbox{where $P(\cdot)$ is a polynomial of degree $\le n$.} 
\end{equation} 
We also notice that the transition from the diagonal coordinates $x$ to Frobenius coordinates $u$ is quite natural:  the coordinates $u^i$ are the coefficients $\sigma_i$ of the characteristic polynomial $\chi_L (t) = \det (t\cdot \Id - L)= t^n - \sigma_1 t^{n-1} - \sigma_2 t^{n-2} -\dots - \sigma_n$, so that, up to sign,  $u_i$ are elementary symmetric polynomials in $x^1,\dots, x^n$.   
}\end{Remark}

The AFF pencil provides a lot of examples of compatible flat metrics $g$ and $\bar g$ that admit a common Frobenius coordinate system: one can take any two metrics from the pencil \eqref{eq:LCpencil} or, equivalently, \eqref{eq:AFF_LC}.

\section{Compatible flat metrics with a common Frobenius coordinate system:  
generic case}\label{sect:3}

Theorems  \ref{thm:frobenius0} and  \ref{prop:bols13}  reduce the compatibility problem for two Poisson structures of the form $\Pthree_h + \Pone_g$ to a classification of all pairs of  metrics $g$ and $\bar g$ admitting a common Frobenius coordinate system.  
The next theorem solves this problem under the standard assumption that $R_g= \bar g g^{-1}$ has $n$ different eigenvalues and one minor additional condition.

\begin{Theorem}
\label{thm:frobenius1}
Let $g$ and $\bar g$ be compatible flat metrics that admit a common Frobenius coordinate system.
Assume that the eigenvalues of the operator $R_g=\bar g g^{-1}$ are all different and in
the diagonal coordinates (such that $R_g$ is diagonal) every diagonal component of $g$ depends
on all variables.  Then the flat pencil $\lambda g + \mu\bar g$ is contained in the AFF-pencil, in other words, there exists a coordinate system $(x^1,\dots, x^n)$ such that 
$$
g = P(L)  g_{\LC} \quad\mbox{and}\quad  \bar g = Q(L) g_{\LC}.
$$
for some polynomials $P(\cdot)$ and $Q(\cdot)$ of degree  $\le n$ and $g_{\LC}$ and $L$ defined by \eqref{norm1}.

Moreover,  if  $n\ge 2$  and $P(\cdot)$ and $Q(\cdot)$ are not proportional, then
the common Frobenius coordinate system for $g = P(L)  g_{\LC}$ and  $\bar g = Q(L) g_{\LC}$
is unique up to an affine coordinate change. 
\end{Theorem}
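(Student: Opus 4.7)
The plan is to exploit the Nijenhuis structure of $R_g=\bar g g^{-1}$ coming from Fact \ref{fact:3}, reduce the problem to its form in diagonal coordinates, apply the known characterization of flat diagonal pencils to get the general shape of $g^{ii}$, and then use the Frobenius condition to extract the polynomial structure of the AFF-pencil.

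First, since $R_g$ is a Nijenhuis operator with $n$ distinct eigenvalues, I pass to coordinates $x^1,\dots,x^n$ which are the eigenvalues of $R_g$ itself, so that $R_g=\operatorname{diag}(x^1,\dots,x^n)$. Because $R_g$ is $g$-self-adjoint (as $\bar g = R_g\, g$ is symmetric) and has simple spectrum, both $g$ and $\bar g$ become diagonal in these coordinates, and the hypothesis that each $g^{ii}$ depends on all variables becomes a literal statement about these diagonal entries. Then I invoke the classical analysis of flat pencils of diagonal metrics associated to a Nijenhuis operator (see \cite{Fer2001,mok1,mok2}): the flatness of every linear combination $\lambda g + \mu \bar g$ forces the diagonal entries to take the form
\begin{equation*}
g^{ii} \;=\; \frac{f_i(x^i)}{\prod_{s\ne i}(x^i - x^s)}, \qquad \bar g^{ii} \;=\; \frac{x^i f_i(x^i)}{\prod_{s\ne i}(x^i - x^s)},
\end{equation*}
for some one-variable functions $f_i$, with $f_i\not\equiv 0$ by the dependency assumption.

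Next, I bring in the common Frobenius coordinate system $u^1,\dots,u^n$. Guided by Remark \ref{Rem:2.1}, I expect (and will verify) that up to an affine change the transition $x\mapsto u$ is the elementary-symmetric-function map $u^k = (-1)^k\sigma_k(x^1,\dots,x^n)$. Carrying out the tensor transformation of $g^{ii}$ to the $u$-coordinates produces a combinatorial sum over $i$ involving $f_i(x^i)$, elementary symmetric polynomials $\sigma_{a-1}(\hat{x}^i)$, and the Vandermonde denominator. The demand that each component $g^{\alpha\beta}(u)$ be an affine function of $u^1,\dots,u^n$ turns out to be equivalent to the conjunction: (a) each $f_i$ is a polynomial of degree at most $n$; and, crucially, (b) all of them coincide, $f_i(t)=P(t)$ for a single polynomial $P$. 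The same calculation applied to $\bar g$ then yields $\bar g = Q(L)g_{\LC}$ with $Q(t) = t\, P(t)$ initially, and after allowing arbitrary linear combinations within the pencil we obtain the full statement $g=P(L)g_{\LC}$, $\bar g=Q(L)g_{\LC}$ with $P,Q$ polynomials of degree $\le n$ and $L=\operatorname{diag}(x^1,\dots,x^n)$ as in \eqref{norm1}. I expect step (b) to be the main obstacle: it requires a clean manipulation of symmetric functions and Vandermonde-type identities, and must be performed without tacitly imposing symmetry of the metric under permutations of the $x^i$, which would be circular. The separation into the two conditions (a) and (b) is what genuinely uses the \emph{joint} Frobenius condition for $g$ and $\bar g$ rather than just for $g$ alone.

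For the uniqueness statement, assume $n\ge 2$ and $P,Q$ nonproportional. Then $g$ and $\bar g$ are linearly independent in the AFF-pencil and the operator $R_g=Q(L)P(L)^{-1}$ has $n$ distinct eigenvalues generically; since it is a genuine rational function of $L$ with nonproportional numerator and denominator, it determines $L$ up to an affine reparametrization of its eigenvalues. Consequently the unordered tuple $\{x^1,\dots,x^n\}$ is intrinsically determined by the pair $(g,\bar g)$ up to a common affine change, and hence so are the elementary symmetric polynomials $u^k=(-1)^k\sigma_k$ — up to an affine transformation of the $u$-coordinates. Combining this with the observation following Fact \ref{fact:5} (any two common Frobenius coordinate systems differ by an affine change once the constant part of the metric is fixed) yields the claimed uniqueness.
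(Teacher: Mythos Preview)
Your proposal has a genuine gap at the second step. You claim that the flatness of every linear combination $\lambda g + \mu \bar g$, together with $R_g=\operatorname{diag}(x^1,\dots,x^n)$, forces
\[
g^{ii} = \frac{f_i(x^i)}{\prod_{s\ne i}(x^i-x^s)},
\]
citing \cite{Fer2001,mok1,mok2}. But those references do not prove this; Poisson compatibility of flat metrics with simple-spectrum $R_g$ is a much weaker condition than Levi--Civita form, and in fact the paper's own Theorem~\ref{thm:frobenius2} exhibits compatible flat pairs (the multi-block pencils) that are \emph{not} of this shape. Your genericity hypothesis does ultimately rule those out, but only after one has already derived the general product form~\eqref{subg} of $g_{ii}$ --- and that derivation is precisely the content of the paper's Lemma~\ref{Lem:ansatz}, which uses the Frobenius condition (via the auxiliary flat connection $\widehat\Gamma$ and the vanishing of $\widehat\nabla S$), not compatibility.

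In short, you have the logical dependence inverted. The paper's route is: (i) encode the existence of a common Frobenius coordinate system as a flat connection $\widehat\Gamma$ satisfying \eqref{eq:matv46}--\eqref{eq:matv49}; (ii) solve for $\widehat\Gamma$ and use $\widehat\nabla S=0$ and $\widehat R=0$ to obtain the product ansatz \eqref{subg}; (iii) under the genericity assumption all $C_{ij}\ne 0$, normalise to reach the Levi--Civita form \eqref{eq:solodovnikov}; (iv) finally, \emph{flatness} (Solodovnikov's Fact~\ref{rem:solodovnikov}) forces $H_i(x^i)=P(x^i)$ for a single polynomial $P$ of degree $\le n$. So the Frobenius hypothesis is what produces the Levi--Civita shape, and flatness is what makes the $f_i$ coalesce into one polynomial --- exactly opposite to your plan. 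Your step~3, where you expect the Frobenius condition to identify all the $f_i$, would not work as stated even if step~2 were granted. The uniqueness argument in the paper is also different: it shows that $\widehat\Gamma$ is uniquely determined by $g,\bar g$ (Section~\ref{sect:8a}), rather than trying to recover $L$ from the rational function $Q(L)P(L)^{-1}$, which need not be injective on the spectrum.
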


Theorem \ref{thm:frobenius1} will be proved in Section \ref{sect:6}.   The uniqueness part will be explained in Section \ref{sect:8a}, see Remark \ref{last:step:proof}.

\begin{Remark}\label{Rem:3.1bis}{\rm 
In Theorem \ref{thm:frobenius1}  we allow some of the eigenvalues $R_g$ to be complex. In this case,  we think that a 
part of the  {\it diagonal}  coordinates $(x^1,...,x^n)$ is also complex-valued. For example, the coordinates $x^1,...,x^k$ may be 
 real-valued, and the remaining coordinates  $x^{k+1}= z^1, x^{k+2}= \bar z^1$,..., $x^{n-1}= z^{\tfrac{n-k}{2} }$, $x^{n }= \bar z^{\tfrac{n-k}{2} }$, where `$\ \bar { \ } \ $'
 means  complex conjugation, are complex-valued. 
 In this case (26) gives us a well-defined (real) metric $g_{\LC}$ and a (real) Nijenhuis operator $L$.   
}\end{Remark}

The genericity condition in Theorem \ref{thm:frobenius1}  is that every diagonal component of $g$ depends
on all variables. 
In Theorems \ref{thm:frobenius2a}, \ref{thm:frobenius2} below we will solve the problem in full generality, without assuming this or any other genericity condition.

\begin{Remark}\label{QFP}{\rm
In \cite[\S 5]{Ferapontov-Pavlov} E. Ferapontov and M. Pavlov asked    whether   dispersive perturbations of the pencil \eqref{eq:LCpencil} with $g_0$ and $L$ given by 
\eqref{norm1} other than those described in Section  \ref{subsect:AFF}   are possible. Theorem \ref{thm:frobenius1} leads to a negative answer under the additional assumption that the dispersive perturbation is in the class of  nondegenerate Darboux-Poisson structures of order 3. Indeed, according to Theorem \ref{thm:frobenius0}  every dispersive perturbation $\lambda (\Pthree_h + \Pone_g) + \mu (\Pthree_{\bar h} + \Pone_{\bar g})$ of the pencil $\lambda \Pone_g + \mu\Pone_{\bar g}$  can be reduced to a simple normal form in a common Frobenius coordinate system for $g$ and $\bar g$ (assuming that $R_h=\bar h h^{-1}$ has different eigenvalues).  Moreover,  in this coordinate system $h$ and $\bar h$ are constant and represent Frobenius forms for the corresponding Frobenius algebras $\mathfrak a$ and $\bar{\mathfrak a}$.  Since by Theorem  \ref{thm:frobenius1}, such a coordinate system is unique,  it remains to solve a Linear Algebra problem of choosing suitable forms $h$ and $\bar h$, satisfying three conditions  (cf. \eqref{eq:r12}):
\begin{equation}
\begin{aligned}
h(\xi  \, \star  \, \eta,  \zeta) &= h(\xi, \eta  \, \star \,  \zeta),\\
\bar h(\xi \ \bar \star \ \eta, \zeta) &= \bar h(\xi, \eta \  \bar \star \  \zeta),\\
\bar h(\xi  \, \star \,  \eta, \zeta) + h(\xi  \ \bar \star  \ \eta, \zeta) &= \bar h(\xi, \eta  \, \star  \, \zeta) +
h(\xi, \eta  \ \bar \star \  \zeta),
\end{aligned}   
\end{equation}

It is straightforward to show for a generic pair $g$, $\bar g$ of metrics from the AFF-pencil, the forms $h$ and $\bar h$ are defined by $n+1$ parameters $m^0, m^1,\dots, m^n$ as in \eqref{eq:disppert_AFF}. No other solutions exist. In particular, formula \eqref{eq:disppert_AFF} describes all possible dispersive perturbations of the AFF-pencil by means of nondegenerate Darboux-Poisson structures of order 3. Moreover, this conclusion holds for any generic two-dimensional subpencil.  
}\end{Remark}

\section{Compatible flat metrics with a common Frobenius coordinate system:  general case}
\label{sect:4}

\subsection{General multi-block Frobenius pencils}\label{subsect:4.1}

Let us now discuss the general case without assuming that  in diagonal coordinates,  
every diagonal component of $g$ depends on all variables.   
 
Similar to Theorem \ref{thm:frobenius1},  the metrics $g$ and $\bar g$ will belong to a large Frobenius pencil built up from  several blocks each of which has a structure of an (extended) AFF pencil.  We start with constructing a series of such pencils.

We first divide our diagonal  coordinates into $B$ blocks of  positive dimensions $n_1,...,n_B$ with   $n_1+...+n_B=n$:
	\begin{equation}
	\label{eq:decomposition}
	(\underbrace{x_1^1,...,x_1^{n_1}}_{X_1},...,\underbrace{x_B^{1},...,x_B^{n_B}}_{X_B}).
	\end{equation}

Next, we consider a collection of $n_\alpha$-dimensional  Levi-Civita metrics $g^{\LC}_\alpha$ and $n_\alpha$-dimensional operators $L_\alpha$ (as in Theorem \ref{thm:frobenius1} but now for each block separately):
\begin{equation}
\label{eq:tildeg}
	g^{\LC}_\alpha= \sum_{s=1}^{n_\alpha} \left(\prod_{j\ne s} (x_\alpha^s- x_\alpha^j)\right)^{-1}  \left(\tfrac{\partial}{\partial x_\alpha^s}\right)^2\  \ , \ \  L_\alpha= \textrm{diag}(x_\alpha^1,...,x_\alpha^{n_\alpha}).
\end{equation}

Then we introduce a new block-diagonal  metric $\widehat g$ 
\begin{equation}
\label{eq:hatg}
\widehat g = \operatorname{diag}(\widehat g_1, \dots, \widehat g_B)\quad \mbox{with } \widehat g_\alpha = 
\prod_{s<\alpha} \left(\frac{1}{\det (\lambda_{s\alpha} \cdot\Id - L_s)}\right)^{c_{s\alpha}} g^{\LC}_\alpha,
\end{equation}
where $c_{s\alpha} = 0$ or $1$.  The values of the discrete parameters $c_{s\alpha}$ and numbers $\lambda_{s\alpha}$ are determined by some combinatorial data as explained below. 
 
Finally, we consider the pencil of  (contravariant) metrics of the form 	
\begin{equation}
\label{eq:genpencil}
	\{ \widehat L  \, \widehat g~|~ \widehat L  \in \mathcal L\} 
\end{equation}
where  $\mathcal L$ is a family (pencil) of block-diagonal operators of the form
$$
\widehat L =  \operatorname{diag}\bigl(P_1(L_1), P_2(L_2), \dots, P_B(L_B)\bigr).
$$
where $P_\alpha(\cdot)$ are polynomials with $\deg P_\alpha\le n_\alpha+1$ treated as parameters of this family. The coefficients of the polynomials $P_\alpha$ are not arbitrary but satisfy a collection of linear relations involving coefficients from different polynomials so that this pencil,  in general, is not a direct sum of blocks (although, direct sum is a particular example).
Notice that $\mathcal L$ is a Nijenhuis pencil whose algebraic structure is quite different from that of the pencil $\{ P(L)\}$ from Theorem \ref{thm:frobenius1}.

The numbers $c_{s\alpha}$, $\lambda_{s\alpha}$ and relations on the coefficients of $P_\alpha$'s are determined by a combinatorial object,  an oriented graph $\mathsf F$ with special properties, namely,  a  directed rooted in-forest (see \cite{wiki} for a definition) whose edges are labelled by numerical marks $\lambda_\alpha$.   This graph may consists of several connected components, each of which  is a rooted tree whose edges are oriented from its {\it leaves} to the {\it root}. An example is shown in 
Figure \ref{Fig:1}.

Each vertex of $\mathsf F$ is associated with a certain block of the above decomposition \eqref{eq:decomposition} and labelled by an integer number $\alpha \in \{1,...,B\}$. The structure of a directed graph  defines a natural strict partial order (denoted by $\prec$) on the set $\{1,...,B\}$: for two numbers $\alpha\ne \beta \in \{1,...,B\}$ we set $\alpha \prec \beta$, if there exists an oriented way from $\beta$ to $\alpha$.
For instance,  for the graph shown on Fig. \ref{Fig:1}, we have $1\prec 3$, $2\prec 4$, $5\prec 6$. Without loss of generality we can and will always assume that the vertices of $\mathsf F$ are labeled in such a way that $\alpha \prec \beta$ implies $\alpha<\beta$.

\begin{figure}
\begin{center}
 \includegraphics[width=0.7\textwidth]{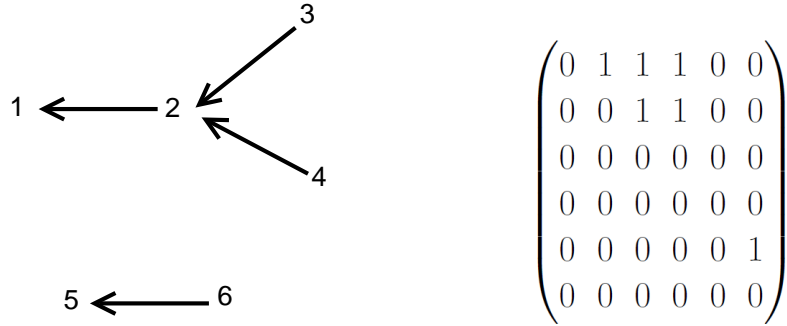} 
\end{center}
\caption{A $6\times 6$ matrix $c_{\alpha \beta}$ and the corresponding in-forest. The upper  tree corresponds to the upperleft $4\times 4$-block, the lower  tree corresponds to the downright $2\times 2$-block. 
 \label{Fig:1}}
\end{figure}

Notice that the vertices of degree one are of two types, roots and leaves:   $\alpha$ is a {\it root} if there is no $\beta$ such that $\beta  \prec \alpha$ and, conversely,  $\beta$ is a {\it leaf} if there is no $\beta$ such that $\alpha \prec \beta$.  Notice that roots of degree $\ge 2$ are also allowed, whereas all leaves have degree 1.
We say $\alpha= \operatorname{next}(\beta)$, if $\alpha\prec \beta$ and there is no $\gamma$ with $\alpha\prec  \gamma \prec \beta$. 
In the  upper tree of Fig. \ref{Fig:1} the root is $1$,   the leaves are $3$ and $4$ and we have:  $1= \textrm{next}(2)$  and $2= \textrm{next}(3)$, $2= \textrm{next}(4)$.

The numbers $c_{s \alpha}$ in \eqref{eq:hatg} are now defined from $\mathsf F$ as follows:
\begin{equation}
\label{eq:condcij}
c_{s \alpha} = \begin{cases}
1, \mbox{ if $s\prec \alpha$,}\\
0, \mbox{ otherwise}.
\end{cases}
\end{equation}
Recall that in our assumptions, $s\prec \alpha$ implies $s<\alpha$  so that the $B{\times}B$-matrix $c_{s\alpha}$ is upper triangular with zeros on the diagonal, see Figure  \ref{Fig:1}.

The parameters $\lambda_{s\alpha}$ are defined as follows.    For each vertex  $\alpha$ that is not a root there is exactly one out-going edge which we will denote by $e_\alpha$.   Notice that the correspondence $\alpha \mapsto e_\alpha$ is a bijection between the set of edges of $\mathsf F$ and the (sub)set of vertices which are not roots.  To each edge  $e_\alpha$ we now assign a number $\lambda_\alpha$ (these numbers will serve as parameters of our construction) and set
\begin{equation}
\label{eq:condlambdaij}
\lambda_{s \alpha} = \lambda_{\beta},   \quad \mbox{where  $s \prec \beta \prec \alpha$  and $s = \operatorname{next}(\beta)$   (or $\beta=\alpha$, if $s = \operatorname{next}(\alpha)$).}
\end{equation}  
Such $\beta$ exists and is unique,  if $s \prec \alpha$, i.e., $c_{s\alpha}=1$. Otherwise, $c_{s\alpha}=0$ and the value of $\lambda_{s \alpha}$ plays no role in \eqref{eq:hatg}.
\begin{Remark}{\rm
The above definitions of parameters $c_{s\alpha}$ and $\lambda_{s\alpha}$ are convenient to make our formulas shorter,  but do not quite clarify the meaning of  \eqref{eq:hatg} in terms of the graph $\mathsf F$.   Equivalently,   formula \eqref{eq:hatg}  can be rewritten as follows:
$$
\widehat g = \operatorname{diag}(\widehat g_1, \dots, \widehat g_B)\quad \mbox{with } \widehat g_\alpha = 
f_\alpha \cdot g^{\LC}_\alpha,
$$
where the function $f_\alpha$  is defined by the oriented path from the vertex $\alpha$ to a certain root $\beta$:
$$
\beta = \alpha_0  \overset{\lambda_{\alpha_1}}{\longleftarrow} \alpha_1 \overset{\lambda_{\alpha_2}}{\longleftarrow} \dots  \overset{\lambda_{\alpha_{k-1}}}{\longleftarrow} \alpha_{k-1}  \overset{\lambda_{\alpha_k}}{\longleftarrow} \alpha_k = \alpha, 
$$
each edge of which is endowed with a number $\lambda_{\alpha_i}$, $i=1,\dots,k$.  Namely, we set
$$
f_\alpha = \prod_{i=1}^k \frac{1}{\det(\lambda_{\alpha_i}\cdot\Id - L_{\alpha_{i-1}})}, 
$$
which coincides with the factor in front of  $g_\alpha^{\LC}$  in formula  \eqref{eq:hatg} written in terms of $c_{s\alpha}$ and $\lambda_{s\alpha}$.
}\end{Remark}

Finally, for a vertex  $\alpha$ we denote  the coefficients  of the  corresponding polynomial 
$P_\alpha$ by $P_\alpha(t) =   \overset{\alpha}{a}_0 + \overset{\alpha}{a}_1  t +...+  \overset{\alpha}{a}_{n_\alpha+1}t^{n_\alpha+1}$.  Then the conditions on the coefficients $P_\alpha(t)$ are

\begin{itemize}
\item[(i)] If $\alpha$ is a root, then $\overset{\alpha}{a}_{n_\alpha+1}=0$,  i.e., $\deg P_\alpha\le n_\alpha$.

\item[(ii)] If $\alpha= \operatorname{next}(\beta)$, 
then $\lambda_\beta$ is a root of $P_\alpha$ and $\overset{\beta}{a}_{n_\beta+1}= P'(\lambda_\beta)$, where $P'(t)$ denotes the derivative of $P(t)$.

\item[(iii)] If $\alpha=  \operatorname{next}(\beta)$ and $\alpha=  \operatorname{next}(\gamma)$ with $\lambda_\beta = \lambda_\gamma=\lambda$, $\beta\ne \gamma$,  then $\lambda$ is a double root of $P_\alpha$   (in view of (ii) this automatically implies 
$\overset{\beta}{a}_{n_\alpha+1} = \overset{\gamma}{a}_{n_\gamma+1} =0$).
\end{itemize}

\begin{Remark}{\rm
Each of the above conditions is linear in the coefficients of $P_\alpha$'s.  However, (i)--(iii) may imply that $P_\alpha =0$ for some $\alpha$.  This happens, for instance, if the vertex $\alpha$ has too many neighbours $\gamma_i$ such that $\alpha = \operatorname{next}(\gamma_i)$, $i=1,\dots,k$. Then all $\lambda_{\gamma_i}$ must be roots of $P_\alpha$ due to (ii). However in view of (i),   $\deg P_\alpha \le n_\alpha + 1$. If $n_\alpha + 1 < k$ and $\lambda_{\gamma_i}$ are all different, then $P_\alpha$ cannot have $k$ different roots unless $P_\alpha =0$.  Strictly speaking, such a situation should be excluded as the corresponding metrics turn out to be degenerate at every point. However,  from the algebraic point of view we still obtain an example of a good Frobenius pencil.

We also notice that the shift $L_\alpha \mapsto L_\alpha + c_\alpha \Id$ in any individual block leads to an isomorphic pencil. In particular, if at a certain vertex $\alpha$ of $\mathsf F$ we add the same number $c_\alpha$ simultaneously to all numerical parameters $\lambda_\beta$ on the incoming edges,  we get an isomorphic pencil.
}\end{Remark}

This completes the description of  the pencil \eqref{eq:genpencil}  of (contravariant) metrics and we can state our next result.

\begin{Theorem} 
\label{thm:frobenius2a}
The pencil \eqref{eq:genpencil}  {\rm(}with $c_{s\alpha}$ defined by \eqref{eq:condcij},   $\lambda_{s\alpha}$ defined by \eqref{eq:condlambdaij} and coefficients of $P_\alpha$ satisfying {\rm (i)-(iii))} is Frobenius.  In other words, 
all the metrics   
\begin{equation}
\label{eq:genpencil2}	
g = \widehat L  \, \widehat g = \operatorname{diag}\bigl(P_1(L_1)\widehat g_1, \dots, P_B(L_B)\widehat g_B\bigr)\quad \mbox{with } \widehat g_\alpha = \prod_{s<\alpha} \left(\frac{1}{\det   (\lambda_{s\alpha} \cdot \Id - L_s)}\right)^{c_{s\alpha}} g^{\LC}_\alpha,
\end{equation}
are flat, Poisson compatible and admit a common Frobenius coordinate system 
\begin{equation}
\label{eq:uFrob}
(u^1,...,u^n)= (\underbrace{u_1^1,...,u_1^{n_1}}_{U_1},...,\underbrace{u_B^1,...,u_B^{n_B}}_{U_B}  ) 
\end{equation}
which is defined as follows.
Let  $\sigma_\alpha^1,\dots,\sigma_\alpha^{n_i}$ denote the coefficients of the characteristic polynomial of $L_\alpha$
$$
\chi_{L_\alpha}(t):=\det\left(t \Id_{n_\alpha\times n_\alpha} - L_\alpha\right)= t^{n_\alpha}- \sigma_\alpha^1 t^{n_\alpha-1}-\sigma_\alpha^2 t^{n_\alpha-2}-...-\sigma_\alpha^{n_\alpha}, \quad \alpha=1,\dots,B.
$$
Then 
\begin{equation}
\label{eq:fromytou}
\begin{array}{lll}
u_1^k &= \sigma_1^k,  \quad &k=1,\dots, n_1,\\
u_2^k &= \bigl(\det (\lambda_{12}\Id - L_1)\bigr)^{c_{12}} \ \sigma_2^k, \quad & k=1,\dots, n_2, \\
u_3^k &= \bigl(\det  (\lambda_{13}\Id - L_1)\bigr)^{c_{13}}\bigl(\det  (\lambda_{23}\Id - L_2)\bigr)^{c_{23}}\ \sigma_3^k, \quad &k=1,\dots, n_3,  \\
          & \dots & \\
u_B^k &= \prod_{s< B} \bigl(\det (\lambda_{sB}\Id - L_s)\bigr)^{c_{sB}} \ \sigma_B^k, \quad &k=1,\dots, n_B.  \\
\end{array}
\end{equation} 

\end{Theorem}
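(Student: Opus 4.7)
The strategy is to verify directly that, in the coordinates $(u^1,\dots,u^n)$ defined by \eqref{eq:fromytou}, every metric $g$ from the pencil \eqref{eq:genpencil2} has contravariant components that are affine in $u$ with constant structure constants symmetric in the upper indices. By Fact~\ref{fact:4}, this simultaneously gives flatness of each such $g$ and the claim that $(u^1,\dots,u^n)$ is a common Frobenius coordinate system for the whole pencil. Poisson compatibility of the family then follows, since the resulting structure constants $a^{\mu\nu}_s$ depend linearly on the parameters of the pencil (the coefficients of the polynomials $P_\alpha$), so the whole family is generated by linear combinations of Frobenius metrics sharing the same flat coordinates, which is exactly the criterion of a flat (Poisson compatible) pencil via Fact~\ref{fact:3}.

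I would start from the single-block base $B=1$, which is the classical AFF statement of Remark~\ref{Rem:2.1}: for $L=\operatorname{diag}(x^1,\dots,x^n)$ and $u^k=\sigma^k$ the coefficients of $\chi_L(t)=\det(t\Id-L)=t^n-u^1 t^{n-1}-\dots-u^n$, the polynomial $\chi_L(t)$ is itself affine in $u$, and for every polynomial $P$ of degree $\le n$ the metric $P(L)g_{\LC}$ has contravariant components affine in $u$. The next observation is that the transformation \eqref{eq:fromytou} is block triangular along the partial order $\prec$: the coordinate $u^k_\alpha$ depends on $x$-variables only from blocks $\gamma\preceq\alpha$, and the rescaling factor $\widehat f_\alpha:=\prod_{s<\alpha}\bigl(\det(\lambda_{s\alpha}\Id-L_s)\bigr)^{c_{s\alpha}}$ is itself affine in the ancestor coordinates $\{u^j_s:s\prec\alpha\}$ by applying the single-block statement to each $L_s$. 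This triangularity is what allows an inductive treatment of the multi-block case along the in-forest $\mathsf F$, processing vertices from the leaves toward the root.

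The core computation is to expand $g^{\mu\nu}(u)$ via the Jacobian of \eqref{eq:fromytou} and check block-wise affinity. For the diagonal block $\mu,\nu\in\alpha$, the principal piece is $\widehat f_\alpha$ times the components of $P_\alpha(L_\alpha)g^{\LC}_\alpha$ written in the $\sigma_\alpha$-coordinates of block $\alpha$, and this is affine in $u$ by the single-block step combined with the affinity of $\widehat f_\alpha$. For off-diagonal blocks $\mu\in\alpha$, $\nu\in\beta$, the contribution vanishes when $\alpha$ and $\beta$ are $\prec$-incomparable (no common ancestor enters the Jacobian sum), and when $\alpha\prec\beta$ (or conversely) the Jacobian cross-terms give residue-type expressions of the form $P_\alpha(\lambda_\beta)/\chi_\alpha(\lambda_\beta)$ and $P'_\alpha(\lambda_\beta)/\chi_\alpha(\lambda_\beta)$, with higher-order analogues along longer oriented paths in $\mathsf F$. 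Conditions (i)--(iii) on the $P_\alpha$ are designed precisely to cancel these poles and to synchronise the top-degree parts of adjacent blocks: (ii) forces $P_\alpha(\lambda_\beta)=0$ for $\alpha=\operatorname{next}(\beta)$ and normalises $P'_\alpha(\lambda_\beta)=\overset{\beta}{a}_{n_\beta+1}$; (iii) upgrades this to a double root when two children of $\alpha$ carry the same edge label; (i) is the boundary condition at a root, where no further cancellation is available.

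The main obstacle is the combinatorial bookkeeping of these cross residues for a general in-forest $\mathsf F$. I plan to handle it by induction on the depth of $\mathsf F$: at each step the currently visited vertex together with its immediate subtree behaves locally like an AFF subpencil to which the single-block analysis applies after absorbing the outer rescaling $\widehat f_\bullet$ into the coordinate change \eqref{eq:fromytou}; conditions (ii)--(iii) then guarantee that the Frobenius structure produced at that step glues consistently with what has already been built further down the forest. Iterating from leaves to roots yields a globally defined affine form of $g^{\mu\nu}(u)$, and, since the whole procedure is linear in the coefficients of the $P_\alpha$'s, one obtains not just a single metric but the entire pencil \eqref{eq:genpencil2} equipped with a common Frobenius coordinate system, which is exactly the statement of the theorem.
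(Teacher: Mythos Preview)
Your approach is genuinely different from the paper's, and there is a real gap in the inductive scheme as you describe it.

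The paper does not attempt to verify directly that $JgJ^\top$ is affine in $u$. Instead it introduces the notion of \emph{pro-Frobenius} coordinates for a constant-curvature metric: coordinates in which $g^{\alpha\beta}=b^{\alpha\beta}+a^{\alpha\beta}_s u^s-4Ku^\alpha u^\beta$ and the contravariant Christoffel symbols are symmetric in the upper indices. The key observation (their Lemma~8.1) is that this symmetry is equivalent to the covariant metric being Hessian, $g_{ij}=\partial_i\partial_j F$. The main technical step (their Proposition~8.1) then shows that if $g$ and $\widehat g$ are pro-Frobenius in coordinates $u$ and $v$ respectively, and $f(u)$ is a linear function satisfying two explicit conditions (b1)--(b2), then the warp product $g+\tfrac{1}{f}\widehat g$ is again pro-Frobenius in the coordinates $(u,y)=(u,fv)$, with an explicit potential $F_{\mathsf w}=F+f\cdot\widehat F$. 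Propositions~8.2 and~8.3 identify $f=\det(\lambda\Id-L_\alpha)$ with $P_\alpha(\lambda)=0$ as satisfying (b1)--(b2) and show that conditions (i)--(iii) are exactly what is needed to iterate. At the root, condition~(i) forces $K=0$ so pro-Frobenius collapses to Frobenius.

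The gap in your plan is this: your induction runs over subtrees of $\mathsf F$, but the metric associated to a subtree rooted at a non-root vertex $\alpha$ is \emph{not} flat---it has constant curvature $-\tfrac14\overset{\alpha}{a}_{n_\alpha+1}$, and in the coordinates you propose its contravariant components are not affine but carry the quadratic term $-4Ku^\mu u^\nu$. So ``check block-wise affinity'' is not an inductive invariant; the invariant you need is precisely the pro-Frobenius form, and with it the Hessian interpretation that automatically delivers the symmetry of $\Gamma^{\alpha\beta}_s$ (which you otherwise must verify separately). Your ``residue-type expressions of the form $P_\alpha(\lambda_\beta)/\chi_\alpha(\lambda_\beta)$'' hint at the right cancellations, but without the pro-Frobenius bookkeeping the cross-block quadratic terms do not cancel step by step---they only disappear globally once you reach the root and use condition~(i). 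The paper's Hessian/warp-product formulation is what makes the iteration close at every intermediate stage.
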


Theorem \ref{thm:frobenius2a} will be proved in Section \ref{sect:8}.

The advantage of the formulas for Frobenius coordinates in Theorem \ref{thm:frobenius2a} is that they are invariant in the sense they do not depend on the  choice of  
 coordinates in blocks, but use coefficients of the characteristic polynomials of blocks $L_i$.  
 
 Let us explain how one can use this property to check algorithmically  (say, using computer algebra software) that the  coordinates in Theorem \ref{thm:frobenius2a} are indeed Frobenius for the metric $g$.

In each block (with number $\alpha$), we change from diagonal coordinates $X_\alpha= (x_\alpha^1,...,x_\alpha^{n_\alpha})$ to the coordinates 
$Y_\alpha= (y_\alpha^1,..., y_\alpha^{n_\alpha})$ given as follows:
\begin{equation}
\chi_{L_\alpha} (t)= t^{n_\alpha}-  y_\alpha^1\, t^{n_\alpha-1}- y_\alpha^2 \,t^{n_\alpha-2}-...- y_\alpha^{n_\alpha} . 
\end{equation}

Note that in the coordinates $Y_\alpha$,   the metric 
$g^{\LC}_\alpha$ and the operator $L_\alpha$ have the form \eqref{norm}  with $u^1,...,u^n$ replaced by $y_\alpha^1,..., y_\alpha^{n_\alpha}$. The iterated warped  product metric $g=(g^{ij})$ is given by the following  easy algebraic formula
$$
g = g_1 + \left(\frac{1}{
\chi_{L_1} (\lambda_{12})}
\right)^{c_{12}} g_2 +  \left(\frac{1}{
\chi_{L_1} (\lambda_{13})}\right)^{c_{13}}\left(\frac{1}{
\chi_{L_1} (\lambda_{23})}\right)^{c_{23}} g_3+... \ ,
$$
with $g_\alpha = P_\alpha(L_\alpha)  g^{\LC}_\alpha$ and $g^{\LC}_\alpha$ and $L_\alpha$ explicitly given by
\eqref{norm}.

 In order to check whether the coordinates $u$  given by \eqref{eq:fromytou}  are Frobenius, 
one needs to perform the multiplication 
$$
J g J^\top,
$$
where $J = \left( \frac{\partial u^i}{\partial y^j}\right)$ is the Jacobi matrix of the coordinate transformation\footnote{This transformation is given by \eqref{eq:fromytou}  as $y_\alpha^i = \sigma_\alpha^i$ and 
$\det(\lambda_{\alpha \beta}\Id - L_\alpha) =   \chi_{L_\alpha} (\lambda_{\alpha\beta})$.} $(y^1,..., y^n) \to (u^1,...,u^n)$  and check whether the entries of the resulting matrix $J g J^\top$ are affine
functions in $u^i$ and conditions \eqref{eq:r12} are fulfilled. All these operations can be realised by  standard computer algebra packages.

The next result gives a description of two-dimensional Frobenius pencils in the general case.

\begin{Theorem}
\label{thm:frobenius2} 
Let $g$ and $\bar g$ be compatible flat metrics that admit a common Frobenius coordinate system.  If the eigenvalues of the operator  $R_g=\bar g g^{-1}$ are all different at a point $\mathsf p$, then in a neighbourhood of this point  the pencil $\lambda g +\mu \bar g$ is isomorphic to a two-dimensional subpencil of the Frobenius pencil \eqref{eq:genpencil} with suitable parameters,  i.e.,  in a certain coordinate system 
these metrics take the form
\begin{equation}
\label{eq:thm4}
g = \operatorname{diag}\left( P_1( L_1) \widehat g_1, \dots, P_B( L_B) \widehat g_B\right) \quad \mbox{and}\quad
\bar g = \operatorname{diag}\left( Q_1( L_1) \widehat g_1, \dots,  Q_B( L_B) \widehat g_B\right)
\end{equation}
{\rm(}with parameters $c_{\alpha\beta}$ defined by \eqref{eq:condcij},  $\lambda_{s\alpha}$ defined by \eqref{eq:condlambdaij} and coefficients of $P_\alpha$ and $Q_\alpha$ satisfying {\rm (i)-(iii))}.
\end{Theorem}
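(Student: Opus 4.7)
The plan is to exploit Fact \ref{fact:3} together with Theorem \ref{thm:frobenius1} applied block by block, with the in-forest structure and the matching conditions (i)--(iii) emerging from the pattern in which the diagonal components of $g$ depend on each coordinate.

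First, I would pass to diagonal coordinates $(x^1,\dots,x^n)$ in which $R_g=\bar g g^{-1}$ is diagonal; this is possible since the eigenvalues of $R_g$ are distinct at $\mathsf p$. By Fact \ref{fact:3} the operator $R_g$ is semisimple Nijenhuis, so after a reparametrisation within each eigenline we may assume $R_g=\operatorname{diag}(x^1,\dots,x^n)$, and then $g=\operatorname{diag}(g^{11},\dots,g^{nn})$, $\bar g=\operatorname{diag}(x^1 g^{11},\dots,x^n g^{nn})$. For each $i$ let $S_i\subseteq\{1,\dots,n\}$ be the set of coordinates on which $g^{ii}$ actually depends (we always take $i\in S_i$). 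The flatness and Poisson-compatibility equations for a diagonal flat metric in these coordinates (classical Egorov-type identities for $\partial_k g^{ii}$) together with the analogous equations for $\bar g$ will be shown to imply that the relation $i\sim j\Leftrightarrow \{i\in S_j\text{ and }j\in S_i\}$ is an equivalence relation. The resulting partition of $\{1,\dots,n\}$ into equivalence classes of sizes $n_1,\dots,n_B$ defines the blocks $X_1,\dots,X_B$ appearing in \eqref{eq:decomposition}.

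Second, on each block $\alpha$ I would factor out the common dependence on outside variables. Precisely, Egorov-type equations will force $g^{ii}$, for $i$ in block $\alpha$, to have the form $f_\alpha(X_1,\dots,X_{\alpha-1})\cdot g_\alpha^{ii}(X_\alpha)$ where $f_\alpha$ is a single scalar factor common to the whole block, and a parallel statement holds for $\bar g$. Inside the block, the rescaled metric $f_\alpha^{-1}g|_\alpha$ has every diagonal entry depending on all block coordinates, so Theorem \ref{thm:frobenius1} applies and gives $g|_\alpha=P_\alpha(L_\alpha)\widehat g_\alpha$ and $\bar g|_\alpha=Q_\alpha(L_\alpha)\widehat g_\alpha$ with $\widehat g_\alpha=f_\alpha\, g^{\LC}_\alpha$ and $L_\alpha=\operatorname{diag}(X_\alpha)$ as in \eqref{eq:tildeg}. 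The polynomials have degree at most $n_\alpha+1$ because the external factor $f_\alpha$ can absorb one extra power of the block characteristic polynomial.

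Third, I would extract the in-forest $\mathsf F$ from the structure of $f_\alpha$. Substituting the above ansatz into the flat pencil equation \eqref{eq:bols1} and using that the contravariant Christoffel symbols are additive, one sees that $f_\alpha$ must be a product of factors of the form $\det(\lambda\cdot\Id-L_s)^{-1}$ with $s<\alpha$ and numerical marks $\lambda$. The set of such factors defines the incoming path of the vertex $\alpha$ in $\mathsf F$, and the multiplicative structure forces that each non-root vertex has a unique immediate ancestor, i.e. $\mathsf F$ is an in-forest with edges labelled by the numbers $\lambda_\alpha$, and the exponents $c_{s\alpha}$ are exactly those given by \eqref{eq:condcij}, \eqref{eq:condlambdaij}. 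Matching of the Christoffel symbols across blocks then translates literally into conditions (i)--(iii) on the coefficients of $P_\alpha$ and $Q_\alpha$: a root $\lambda_\beta$ of a sibling polynomial arises because the warping factor $\det(\lambda_\beta\Id-L_\alpha)^{-1}$ in the next level must combine with $P_\alpha$ to produce a regular expression, and the derivative condition in (ii) encodes the residue calculation at that pole; (iii) is then the coincidence case. Finally, Theorem \ref{thm:frobenius2a} (which is proved independently) guarantees that the resulting pencil is indeed Frobenius and provides the explicit common Frobenius coordinates \eqref{eq:fromytou}, completing the identification.

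The main obstacle will be Step two: cleanly deriving from the flatness/compatibility PDE system that the dependence pattern $\{S_i\}$ actually organises into equivalence classes with a single scalar warping factor per class, and that the inter-block dependence forms an in-forest rather than a general directed acyclic graph (i.e. each non-root vertex has a unique out-going edge). This requires a delicate analysis of the Egorov-type rotation coefficients and their mutual compatibility when restricted to pairs of blocks; the case analysis around condition (iii), when two sibling edges acquire the same label $\lambda$, and around degenerate configurations where some $P_\alpha$ would be forced to vanish, is where the argument will need the most care.
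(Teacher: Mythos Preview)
Your outline has the right macroscopic shape---partition into blocks, identify a warped-product structure, extract an in-forest, then pin down the polynomials---but two of your key steps rest on tools that do not quite do the job, and the paper's actual mechanism is different.

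First, the engine driving the block decomposition in the paper is not ``Egorov-type identities'' for a diagonal flat metric, but the flat connection $\widehat\Gamma$ that vanishes in the (hypothetical) Frobenius coordinates. Sections \ref{subsect:6.1}--\ref{subsect:6.2} encode the existence of Frobenius coordinates as the vanishing of $\widehat\nabla S$ for an explicit tensor $S$ built from $g,\bar g,\widehat\Gamma$; solving these PDEs yields Lemma~\ref{Lem:ansatz}, giving each $g_{ii}$ as a product of factors $(C_{ij}U_i + C_{ji}U_j + E_{ij})^{\alpha_{ij}}$. The combinatorics of the constant matrix $C_{ij}$ (Lemma~\ref{Lem:2}) is what produces the upper-block-triangular structure and the in-forest properties (a)--(c) of the reduced matrix $c_{\alpha\beta}$. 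Your dependency sets $S_i$ and the relation $i\sim j$ are a shadow of this, but without the $C_{ij}$ analysis you will not get the crucial one-outgoing-edge property of the in-forest; compatibility of rotation coefficients alone does not force it.

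Second, your plan to invoke Theorem~\ref{thm:frobenius1} block by block is circular as stated: that theorem requires the restricted pair $(f_\alpha^{-1}g|_\alpha,\,f_\alpha^{-1}\bar g|_\alpha)$ to admit a common Frobenius coordinate system, which you have not established for the restriction. The paper avoids this by observing instead that the iterated warped-product structure forces each block metric $g_\alpha$ to have constant curvature, and then applies Solodovnikov's result (Fact~\ref{rem:solodovnikov}) directly to obtain $g_\alpha = P_\alpha(L_\alpha)g_\alpha^{\LC}$ with $\deg P_\alpha \le n_\alpha+1$. The degree bound comes from constant curvature, not from ``absorbing one extra power'' as you suggest. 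Conditions (i)--(iii) are then read off from curvature and Casimir computations on the warped product (Facts~\ref{Fact:cas}, \ref{fact:6,5} and Lemma~\ref{lem:flat}), together with a reduction of the general in-tree to chains via totally geodesic submanifolds $M_\beta$---not from matching Christoffel symbols across blocks. One-dimensional blocks need a separate argument (equation~\eqref{eq:1dim}), which your sketch omits.
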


Theorem \ref{thm:frobenius2}  will be proved in Section \ref{sect:7}.

\begin{Remark} {\rm 
In  Theorem   \ref{thm:frobenius2} we allow complex  eigenvalues of $R_g$. The corresponding  part of diagonal coordinates is then  complex.  Moreover, the polynomials $P_\alpha$ and $Q_\alpha$  may have complex coefficients, and also the numbers  $\lambda_{s\alpha}$  may be complex. The only condition is that the metrics given by (30) should be well-defined real metrics. It is easy to see that this condition implies in particular  that every block \  $(g^{\LC}_\alpha,  L_\alpha)$ is either real or  pure complex  (= all coordinates are complex; the coefficients of the polynomials $P_\alpha$ and $Q_\alpha$ may be complex as well), and that   a pure complex block comes together with a complex-conjugate one. 
See also  \cite[\S 3]{nij1} for  discussion on Nijenhuis operators some of whose eigenvalues are complex. 
 }\end{Remark}

In certain special cases, a common Frobenius coordinate system for $g$ and $\bar g$ is not unique (up to affine transformations). This is the case  when $n_\alpha=1$,   $c_{\alpha \beta}=0$ for all $\beta$ (i.e., this block represents a leaf of the corresponding in-forest)  and the diagonal component of $R_g=\bar g g^{-1}$ corresponding to this block is constant, in other words, the (quadratic) polynomials $P_\alpha$ and $Q_\alpha$ are proportional.  The restrictions $g_\alpha$ and $\bar g_\alpha$ onto these blocks are then as follows 
$$
g_\alpha =  f \cdot  \left(a_2 (x^\alpha)^2 + a_1 x^\alpha + a_0\right) \left(\tfrac{\partial}{\partial x^\alpha} \right)^2  \quad \mbox{and} \quad
\bar g_\alpha = c \, g_\alpha = c\,  f \cdot  \left(a_2 (x^\alpha)^2 + a_1 x^\alpha + a_0\right) \left(\tfrac{\partial}{\partial x^\alpha} \right)^2, 
$$ 
where $f$ is some function of the remaining coordinates, $c\in\R$  and $L_\alpha = (x^\alpha)$  (diagonal $1\times 1$ matrix). 
However, we can do coordinate transformation   $x^\alpha \mapsto \tilde x^\alpha =  \tilde x^\alpha (x^\alpha)$  that changes the coefficients $a_1$ and $a_0$  (the highest coefficient $a_2$ is fixed by condition (ii)). 
$$
g_\alpha =  f \cdot  \left(a_2 (\tilde x^\alpha)^2 + \tilde a_1 \tilde x^\alpha + \tilde a_0\right) \left(\tfrac{\partial}{\partial \tilde x^\alpha} \right)^2  \quad \mbox{and} \quad
\bar g_\alpha = c \, g_\alpha = c\,  f \cdot  \left(a_2 (\tilde x^\alpha)^2 + \tilde a_1 \tilde x^\alpha + \tilde a_0\right) \left(\tfrac{\partial}{\partial \tilde x^\alpha} \right)^2, 
$$ 
Hence, with a new operator  $L_\alpha^{\mathsf{new}} = (\tilde x^\alpha)$  and new polynomials $P^{\mathsf{new}} _\alpha (t) = a_2 t^2 + \tilde a_1 t + \tilde a_0$, $Q^{\mathsf{new}} _\alpha(t) = c(a_2 t^2 + \tilde a_1 t + \tilde a_0)$, we still remain in the framework of our construction and \eqref{eq:thm4} still holds. This transformation will lead to another Frobenius coordinate system. 
 In Section \ref{sect:8a} we explain that only this situation allows ambiguity in   the choice of Frobenius coordinates up to affine transformations.

\begin{Remark}{\rm 
In \cite[Theorem 2]{Ferapontov-Pavlov} it was claimed that {\it under some general assumptions} for $n>2$,
there is only one equivalence class of $(n+1)$-Hamiltonian hydrodynamic systems (in the sense of \cite{Ferapontov-Pavlov}) and $n+1$ is the best possible.  The corresponding multi-Hamiltonian structure  comes from the $(n+1)$-dimensional AFF-pencil.  
In this view,  it is interesting to notice that multi-block pencils from Theorem \ref{thm:frobenius2} also provide such a structure, which may have even higher dimension. 
}\end{Remark}

\subsection{Case of two blocks} \label{subsect:4.3} 

In the case of two blocks, i.e., $B=2$, the construction explained in the previous section gives a natural and rather simple answer. 
 We have two cases: $c_{12}=0$ and $c_{12}=1$. The first case is trivial being a direct product of two blocks (possibly complex conjugate) each of which is as in Theorem \ref{thm:frobenius1};  in \eqref{eq:genpencil2} we set  $\widehat g_i=g_i^{\LC}$  and take $P_i$ to be arbitrary polynomials of degrees $\le n_i$ ($i=1,2$). 
 
 Theorem below is a special case of Theorem 4 in the non-trivial case $c_{12}=1$.

\begin{Theorem} \label{thm:frobenius3} 
Suppose $B=2$, $c_{12}=1$ and consider the metric $g$ given by the construction from Section \ref{subsect:4.1}:
\begin{equation}
\label{eq:2blockswarp}
g = g_1 +  \frac{1}{\det(-L_1)} \, g_2, \quad  \mbox{with } g_i = P_i(L_i) g_i^{\LC}.
\end{equation}
Following this construction, assume that the polynomials $P_1$ and $P_2$ have degrees no greater than $n_1 $ and $n_2+1$ respectively: $P_1=\sum_{s=0}^{n_1} a_st^s$ and  $P_2=\sum_{s=0}^{n_2+1} b_st^s$. Then the coordinates from Theorem \ref{thm:frobenius2a} are Frobenius for $g$ if and only if    $a_0=0$    and  $a_1 = b_{n_2+1}$. 
\end{Theorem}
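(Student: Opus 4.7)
The plan is to reduce the theorem to an affineness check and then compute in coordinates. By Fact \ref{fact:4}, the coordinates $u$ from Theorem \ref{thm:frobenius2a} are Frobenius for $g$ if and only if every component $g^{ij}$ is affine in $u$. The ``if'' direction is in fact an immediate specialisation of Theorem \ref{thm:frobenius2a}: with $\lambda_{12}=0$, conditions (i)--(iii) collapse to $\deg P_1 \le n_1$ (assumed), $P_1(0)=a_0=0$ and $P_1'(0) = a_1 = b_{n_2+1}$. The substance of the statement is therefore the converse: both conditions are necessary.

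I would set up the computation in the ``companion-matrix'' coordinates $y$ of each block, in which $L_\alpha$ and $g_\alpha^{\LC}$ have the canonical form \eqref{norm} and $g = \operatorname{diag}(g_1, D^{-1}g_2)$ with $D = \det(-L_1) = \chi_{L_1}(0) = -y_1^{n_1}$. The formula \eqref{eq:fromytou} then reduces to $u_1^k = y_1^k$ and $u_2^k = D\, y_2^k = -u_1^{n_1} y_2^k$, with Jacobian
\begin{equation*}
J = \begin{pmatrix} I & 0 \\ A & D\,I \end{pmatrix}, \qquad A^k_l = \delta_{l,n_1}\,\tfrac{u_2^k}{u_1^{n_1}},
\end{equation*}
so that
\begin{equation*}
g^{\mathsf{new}} = J g J^\top = \begin{pmatrix} g_1 & g_1 A^\top \\ A g_1 & A g_1 A^\top + D g_2 \end{pmatrix}.
\end{equation*}

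Then I would analyse each block. Block $11$ is $g_1 = P_1(L_1) g_1^{\LC}$, affine in $u_1$ by the AFF structure since $\deg P_1 \le n_1$. For block $12$, one has $g^{\mathsf{new},jk}_{12} = (g_1)^{j,n_1}\, u_2^k/u_1^{n_1}$, and the explicit form \eqref{p3} yields $(L_1^s g_1^{\LC})^{j,n_1} = u_1^{n_1}\,\delta_{j,n_1-s+1}$ for $1 \le s \le n_1$, whereas the $s=0$ piece is the obstruction $(g_1^{\LC})^{1,n_1}=1$, $(g_1^{\LC})^{j,n_1} = -u_1^{j-1}$ for $j>1$. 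Since $u_2^k$ appears as a factor and $u_1,u_2$ are independent, affineness of $g^{\mathsf{new},jk}_{12}$ forces $a_0 = 0$; then $(g_1)^{j,n_1} = a_{n_1-j+1} u_1^{n_1}$ and in particular $(g_1)^{n_1,n_1} = a_1 u_1^{n_1}$, giving $(A g_1 A^\top)^{kk'} = a_1\,u_2^k u_2^{k'}/u_1^{n_1}$.

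The last ingredient is $D g_2$. Writing $g_2 = \sum_{s=0}^{n_2+1} b_s L_2^s g_2^{\LC}$, the terms with $s\le n_2$ lie in the AFF pencil of block $2$, are affine in $y_2$, and $D$ times any such term is affine in $u$ via $u_1^{n_1} y_2^l = -u_2^l$. The delicate point, and the main obstacle I anticipate, is the top term $b_{n_2+1} L_2^{n_2+1} g_2^{\LC}$: using Cayley--Hamilton  $L_2^{n_2+1} = \sum_{j} y_2^j\, L_2^{n_2-j+1}$ together with the companion-matrix bookkeeping of \eqref{p3}, one checks that the quadratic-in-$y_2$ part of $(L_2^{n_2+1} g_2^{\LC})^{jk}$ is exactly $y_2^j y_2^k$, the remainder being linear in $y_2$. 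Substituting $y_2^k = -u_2^k/u_1^{n_1}$ and multiplying by $D$ produces the non-affine contribution $-b_{n_2+1}\,u_2^j u_2^k/u_1^{n_1}$, whence
\begin{equation*}
g^{\mathsf{new},jk}_{22} = (a_1 - b_{n_2+1})\,\tfrac{u_2^j u_2^k}{u_1^{n_1}} + (\text{affine in }u),
\end{equation*}
and affineness forces $a_1 = b_{n_2+1}$. Isolating this clean quadratic part $y_2^j y_2^k$ is the technical heart of the argument; once it is in hand, the two independent obstructions $a_0\ne 0$ and $a_1\ne b_{n_2+1}$ combine to give both implications of the theorem.
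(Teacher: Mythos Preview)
Your argument is correct, and it takes a genuinely different route from the paper.  The paper does not prove Theorem~\ref{thm:frobenius3} separately; it is stated as the two-block specialisation of the general machinery.  The ``if'' direction is a special case of Theorem~\ref{thm:frobenius2a}, whose proof (Section~\ref{sect:8}) goes through the pro-Frobenius/Hessian formalism of Propositions~\ref{prop:keyprop}--\ref{prop:keyprop3}.  The ``only if'' direction, in the paper's logic, comes from the curvature analysis of Section~\ref{sec:con} (Lemma~\ref{lem:flat} and Facts~\ref{Fact:cas},~\ref{fact:6,5}): Frobenius coordinates force $g$ to be flat, and flatness of the warped product forces $f=\sqrt{\det(-L_1)}$ to be a $g_1$-Casimir (giving $P_1(0)=a_0=0$) and then $g_1^*(\ddd f,\ddd f)=K_2$ (giving $a_1=b_{n_2+1}$).

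You instead compute the metric components directly in the $u$-coordinates via the Jacobian $J$ and locate the two independent non-affine obstructions.  This is more elementary and entirely self-contained for the two-block case: it avoids both the Hessian/pro-Frobenius machinery and the Casimir/curvature arguments.  The price is that the computation does not obviously generalise to many blocks, whereas the paper's approach is designed to iterate.  One small remark: your opening sentence ``Frobenius iff affine'' is a slight overstatement of Fact~\ref{fact:4} (affineness alone does not guarantee the Frobenius-algebra axioms), but your actual argument only uses the direction ``Frobenius $\Rightarrow$ affine'' for the necessity, and defers sufficiency to Theorem~\ref{thm:frobenius2a}, so the logic is sound.  Your key technical claim---that the quadratic-in-$y_2$ part of $(L_2^{n_2+1}g_2^{\LC})^{jk}$ is exactly $y_2^jy_2^k$---is in fact computed explicitly in the paper at the start of Section~\ref{subsect:8.1}.
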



\begin{Example}\label{Ex:1}{\rm 
In Theorem \ref{thm:frobenius3}, take $n_1=n_2=2$.  
In diagonal coordinates $x^1, x^2, x^3, x^4$,  the metric $g=(g^{ij})$  is as follows:
$$
g = \operatorname{diag} \left(\frac{P_1(x^1)}{x^1-x^2}, \frac{P_1(x^2)}{x^2-x^1},  \frac{P_2(x^3)}{x^1x^2(x^3-x^4)},  \frac{P_2(x^4)}{x^1x^2(x^4-x^3)}\right),  
$$
where $P_1(t) = a_1 t + a_2t^2$ and $P_2(t)= b_0 + b_1 t + b_2 t^2 + b_3 t^3$ with $b_3 = a_1$. 
Recall that $L = L_1\oplus L_2$ with $L_1=\operatorname{diag} (x^1, x^2)$, $L_2=\operatorname{diag} (x^3, x^4)$,  and the relation between the diagonal coordinates $x^i$ and the Frobenius coordinates $u^i$ given by  Theorem \ref{thm:frobenius2a} are as follows:
$$
\begin{aligned}
u^1 &=    \tr L_1 = x^1 + x^2,  \\
u^2 &=  - \det L_1 =-  x^1x^2, \\
u^3 &= \det L_1 \cdot \tr L_2 = x^1x^2 (x^3 + x^4),\\
u^4 &=   -\det L_1 \cdot \det  L_2 =-x^1x^2x^3x^4=-\det L.
\end{aligned}
$$
In these Frobenius coordinates, the metric $g = (g^{ij})$ has the following form:
$$
g= 
\left( \begin {array}{cccc} 
a_{2}u^{1}+a_{1} & a_{2}u^{2} & a_{2}u^{3} & a_{2}u^{4} \\ 
\noalign{\medskip}a_{2}u^{2} & a_{1}u^{2}& a_{1}u^{3} & a_{1}u^{4} \\ 
\noalign{\medskip}a_{2}u^{3}&a_{1}u^{3} & -a_{1}u^{4}-b_{1}u^{2}-b_{2}u^{3}&-b_{0}u^{2}-b_{2}u^{4} \\ 
\noalign{\medskip}a_{2}u^{4}&a_{1}u^{4} & -b_{0}u^{2}-b_{2}u^{4} & b_{0}u^{3}-b_{1}u^{4}
\end {array} 
\right). 
$$
This formula defines a 5-dimensional pencils of metrics (with parameters $a_1,a_2, b_0,b_1, b_2$).
For any choice of the parameters such that $g$ is nondegenerate, the coordinates  $u^i$ are Frobenius for it in the sense of Definition \ref{def:frobcoord}.  

From the algebraic viewpoint,  we may equivalently think of this formula as 5-parametric family  (pencil) of Frobenius algebras $(\mathfrak a, b)$.
The entries of $g$ define the structure constants of $\mathfrak a$. For instance,  $g^{11}=a_{2}u^{1}+a_{1}$ and 
  $g^{34} = -b_0 u^2 - b_2 u^4$ imply 
$$
e^1 \star e^2= a_2 e^1 \  \ \textrm{and} \   e^3 \star e^4 = -b_0 e^2 - b_2 e^4
$$
for a basis $e^1, e^2, e^3, e^4$ of $\mathfrak a$.  The matrix $(b^{ij})$ of the corresponding Frobenius form  $b$ is obtained from that of  $g$ by assigning to $u^i$  any constant values  $u^i = m^i\in \R$  (such that $b$ is non-degenerate for generic choice of 
$a_1,a_2, b_0,b_1, b_2$). To get a Frobenius pencil, the constants $m^i$ should be the same for all parameters  $a_1,a_2, b_0,b_1, b_2$.

In the coordinates $(u^1,...,u^4)$  the operators $L_1$ and  $L_2$  are 
 given by the matrices  
 $$
 L_1=\left( \begin {array}{cccc} u^{{1}}&1&0&0\\ \noalign{\medskip}u^{{2}}
&0&0&0\\ \noalign{\medskip}u^{{3}}&0&0&0\\ \noalign{\medskip}u^{{4}}&0
&0&0\end {array} \right)\ , \ \ 
L_2= \left( \begin {array}{cccc} 
0&0&0&0\\ 
\noalign{\medskip}0&0&0&0\\ 
\noalign{\medskip}0&{\frac {-u^{{4}}u^{{2}}-(u^{3})^{2}}{(u^{2})^{2}}}&{\frac {u^{{3}}}{u^{{2}}}}&1\\ 
\noalign{\medskip}0&-{\frac {u^
{{4}}u^{{3}}}{(u^{2})^{2}}}&{\frac {u^{{4}}}{u^{{2}}}}&0\end {array}
 \right).
 $$
The matrices of $g_1^{\LC}$ and $g_2^{\LC}$ are  
$$
g_1^{\LC}= \left( \begin {array}{cccc} 0&1&{\frac {u^{{3}}}{u^{{2}}}}&{\frac {u^
{{4}}}{u^{2}}}\\ 
\noalign{\medskip}1&-u^{{1}}&-{\frac {u^{{3}}u^{{1}
}}{u^{{2}}}}&-{\frac {u^{{1}}u^{{4}}}{u^{{2}}}}\\ 
\noalign{\medskip}{
\frac {u^{3}}{u^{2}}}&-{\frac {u^{3}u^{1}}{u^{2}}}&-{\frac {
(u^{3})^{2}u^{{1}}}{(u^{2})^{2}}}&-{\frac {u^{1}u^{4}u^{3}}{
(u^{2})^{2}}}\\ 
\noalign{\medskip}{\frac {u^{{4}}}{u^{{2}}}}&-{
\frac {u^{{1}}u^{{4}}}{u^{{2}}}}&-{\frac {u^{{1}}u^{{4}}u^{{3}}}
{(u^{2})^{2}}}&-{\frac {u^{1}(u^{4})^2}{(u^{2})^{2}}}\end {array}
 \right)\ , \ \ 
 g_2^{\LC}= \left( \begin {array}{cccc} 0&0&0&0\\ 
 \noalign{\medskip}0&0&0&0
\\ \noalign{\medskip}0&0&0&-u^{{2}}\\ \noalign{\medskip}0&0&-u^{{2}}&u^{{3}}\end {array} \right) .
$$

}\end{Example}


\section{ Proof of Theorems \ref{thm:frobenius0} and  \ref{prop:bols13}}\label{sect:5}

\begin{proof} [Proof of Theorem \ref{thm:frobenius0}]
We assume that $\mathcal B_h + \mathcal A_g$ and $\mathcal B_{\bar h} + \mathcal A_{\bar g}$ are compatible with the additional condition that the eigenvalues of $R_h = \bar h h^{-1}$ are pairwise different. We also assume that $h + \bar h$ is nondegenerate.

Recall that Theorem 7.1 from \cite{doyle} implies that $\mathcal B_h$ and $\mathcal B_{\bar h}$ are compatible Poisson structures (item (i) in Fact \ref{fact:1}). Let $\Gamma^{\alpha \beta}_s$ and $\bar \Gamma^{\alpha \beta}_s$ denote the contravariant Levi-Civita connections of $h$ and $\bar h$. From Theorem 3.2 in \cite{doyle} applied to $\mathcal B_h + \mathcal B_{\bar h}$, it follows that the connection $\widehat \Gamma^{\beta}_{qs}$  defined from
$$
\Gamma^{\alpha \beta}_s + \bar \Gamma^{\alpha \beta}_s = (h + \bar h)^{\alpha q} \widehat \Gamma^{\beta}_{qs}
$$
is symmetric and flat.
 
 By direct computation $\widehat \nabla (h + \bar h) = \nabla h + \bar \nabla \bar h = 0$, so that  $\widehat \Gamma$ is the Levi-Civita connection for $h + \bar h$ and moreover, $h + \bar h$ is flat.
According to Theorem 6.2 in \cite{doyle}, this implies that $\mathcal B_h + \mathcal B_{\bar h}$  is  Darboux-Poisson (i.e., is given by \eqref{intro:third}).  Hence, in our notations,  we obtain the formula 
\begin{equation}\label{s1}
\mathcal B_h + \mathcal B_{\bar h} = \mathcal B_{h + \bar h}.    
\end{equation}

Setting  $\widehat \Gamma^{\alpha \beta}_s = (h + \bar h)^{\alpha q} \widehat \Gamma^{\beta}_{qs}$ to be the contravariant Levi-Civita connection of $h + \bar h$,   we get 
\begin{equation}\label{s2}
    \Gamma^{\alpha \beta}_s + \bar \Gamma^{\alpha \beta}_s = \widehat \Gamma^{\alpha \beta}_s,
\end{equation}
and conclude that $h$ and $\bar h$ are Poisson compatible in the sense of Definition \ref{def:compmetrics} (in particular, this proves the (ii)-part of Fact \ref{fact:2}).  Hence, $R_h= \bar h h^{-1}$ is a Nijenhuis operator (Fact \ref{fact:3}).

For a pair of flat metrics $h$ and $\bar h$, introduce the so-called obstruction tensor 
$$
S^{\beta}_{rq} = \Gamma^\beta_{rq} - \bar \Gamma^\beta_{rq}.
$$
It vanishes if and only if $h, \bar h$ can be brought to constant form simultaneously (thus, the name). It is obviously symmetric in lower indices. Condition \eqref{s2} can be written in equivalent form (\cite{mok2}, Lemma 3.1 and Theorem 3.2) in terms of only $\Gamma^{\alpha \beta}_s, \bar \Gamma^{\alpha \beta}_s, h, \bar h$
\begin{equation}\label{sr}
    \bar{\Gamma}^{\alpha \beta}_q h^{q \gamma} - \bar{\Gamma}^{\gamma \beta}_q h^{q \alpha} + \Gamma^{\alpha \beta}_q \bar{h}^{q \gamma} - \Gamma^{\gamma \beta}_q \bar{h}^{q \alpha} = 0.
\end{equation}
After lowering both indices with $h$ and rearranging the terms we get
\begin{equation}\label{s3}
    S^{\beta}_{pq} R^q_s - R^q_p S^{\beta}_{qs} = 0.
\end{equation}
For a given metric $h$ and its Levi-Civita connection, define
$$
c^{\alpha \beta}_{rs} = h^{\alpha q} \Bigg(\Gamma_{qr}^a \Gamma_{as}^{\beta} - \pd{\Gamma^{\beta}_{qs}}{u^r} \Bigg).
$$
The $\bar c^{\alpha \beta}_{rs}, \widehat c^{\alpha \beta}_{rs}$ for $\bar h$ and $h + \bar h$ are defined in a similar way. This formula is one `half' of the formula for Riemann curvature tensor and the flatness of the metrics implies that $c^{\alpha \beta}_{rs} = c^{\alpha \beta}_{sr}$ (and similarly for metrics $\bar h, h + \bar h$).  Using this symmetry in lower indices, we apply the general formula  \eqref{intro:third}  to the Poisson structures in   \eqref{s1} and collect coefficients in front of $D^2$ to get
$$
3 c^{\alpha \beta}_{rs} u^r_x u^s_x - 3 \Gamma^{\alpha \beta}_s u^s_{xx} + 3 \bar c^{\alpha \beta}_{rs} u^r_x u^s_x - 3 \bar \Gamma^{\alpha \beta}_s u^s_{xx} = 3 \widehat c^{\alpha \beta}_{rs} u^r_x u^s_x - 3 \widehat \Gamma^{\alpha \beta}_s u^s_{xx}.
$$
Collecting all the terms  with $u^r_x u^s_x$ in this differential polynomial, in turn, implies 
$$
\widehat c^{\alpha \beta}_{rs} - c^{\alpha \beta}_{rs} - \bar c^{\alpha \beta}_{rs} = 0.
$$
Using the characteristic property of the Levi-Civita connection
$$
\pd{h^{\alpha \beta}}{u^s} + h^{\alpha q} \Gamma^{\beta}_{qs} + \Gamma^{\alpha}_{qs} h^{q \beta} = 0
$$
we rewrite $c^{\alpha \beta}_{rs}$ as
\begin{equation}\label{s4}
    c^{\alpha \beta}_{rs} = h^{\alpha q} \Bigg(\Gamma_{qr}^a \Gamma_{as}^{\beta} - \pd{\Gamma^{\beta}_{qs}}{u^r} \Bigg) = - \pd{}{u^r} \Big[ h^{\alpha q} \Gamma^{\beta}_{qs}\Big] - h^{pq} \Gamma^{\alpha}_{qr} \Gamma^{\beta}_{ps}
\end{equation}
Applying \eqref{s2}, \eqref{sr} and \eqref{s4} yields
\begin{equation*}
\begin{aligned}
0 & = \big( \widehat{c}^{\alpha \beta}_{rs} - c^{\alpha \beta}_{rs} - \bar{c}^{\alpha \beta}_{rs} \big) (h + \bar h)^{s \gamma} = \Big(h^{pq} \Gamma^{\alpha}_{qr} \Gamma^{\beta}_{ps} + \bar{h}^{pq} \bar{\Gamma}^{\alpha}_{qr} \bar{\Gamma}^{\beta}_{ps} - (h + \bar h)^{pq} \widehat{\Gamma}^{\alpha}_{qr} \widehat{\Gamma}^{\beta}_{ps} \Big)(h + \bar h)^{s \gamma} = \\
& = h^{pq} \Gamma^{\alpha}_{qr} \Gamma^{\beta}_{ps} h^{s\gamma} + h^{pq} \Gamma^{\alpha}_{qr} \Gamma^{\beta}_{ps} \bar{h}^{s \gamma} + \bar{h}^{pq} \bar{\Gamma}^{\alpha}_{qr} \bar{\Gamma}^{\beta}_{ps}h^{s\gamma} + \bar{h}^{pq} \bar{\Gamma}^{\alpha}_{qr} \bar{\Gamma}^{\beta}_{ps}\bar{h}^{s\gamma} - \big( h^{pq}\Gamma_{qr}^{\alpha} + \bar{h}^{pq}\bar{\Gamma}_{qr}^{\alpha}\big) \big( \Gamma^{\beta}_{ps} h^{s\gamma} + \bar{\Gamma}^{\beta}_{ps} \bar{h}^{s \gamma}\big) = \\
& = \Gamma^{\alpha}_{qr} \big( h^{pq} \Gamma^{\beta}_{ps} \bar{h}^{s \gamma} -  h^{pq} \bar{\Gamma}^{\beta}_{ps} \bar{h}^{s \gamma}\big) - \bar{\Gamma}^{\alpha}_{qr} \big( \bar{h}^{pq} \Gamma^{\beta}_{ps} h^{s \gamma} -  \bar{h}^{pq} \bar{\Gamma}^{\beta}_{ps} h^{s \gamma}\big) = \\
& = \big( S^{\alpha}_{qr} + \bar{\Gamma}^{\alpha}_{qr}\big)\big( h^{pq} \Gamma^{\beta}_{ps} \bar{h}^{s \gamma} -  h^{pq} \bar{\Gamma}^{\beta}_{ps} \bar{h}^{s \gamma}\big) - \bar{\Gamma}^{\alpha}_{qr} \big( \bar{h}^{pq} \Gamma^{\beta}_{ps} h^{s \gamma} -  \bar{h}^{pq} \bar{\Gamma}^{\beta}_{ps} h^{s \gamma}\big) = \\
& = S^{\alpha}_{qr} h^{pq} S^{\beta}_{ps} \bar{h}^{s \gamma} + \bar{\Gamma}^{\alpha}_{qr} \big( h^{pq} \Gamma^{\beta}_{ps} \bar{h}^{s \gamma} -  h^{pq} \bar{\Gamma}^{\beta}_{ps} \bar{h}^{s \gamma} - \bar{h}^{pq} \Gamma^{\beta}_{ps} g^{s \gamma} +  \bar{h}^{pq} \bar{\Gamma}^{\beta}_{ps} h^{s \gamma} \big) = \\
& = S^{\alpha}_{qr} h^{pq} S^{\beta}_{ps} \bar{h}^{s \gamma}.
\end{aligned}
\end{equation*}
Now consider the coordinate system in which the Nijenhuis operator $R_h$ is diagonal.  As $R_h$ by definition is self-adjoint with respect to both $h$ and $\bar h$, we get that both $h$ and $\bar h$ are also diagonal. Condition \eqref{s3} implies that for given $\beta$ the only non-zero elements of $S^{\beta}_{pq}$ are the ones that stand on the diagonal. The previous calculation yields 
$$
S^{\alpha}_{qr} h^{pq} S^{\beta}_{ps} \bar{h}^{s \gamma} = 0
$$
which, for  fixed $\alpha$ and $\beta$, is just the product of four diagonal matrices, two of which are nondegenerate. Taking $\alpha = \beta$ we see that the matrix $S^{\alpha}_{qr}$ must be zero. As $\alpha$ is arbitrary, this implies that the obstruction tensor vanishes and $h, \bar h$ have common Darboux coordinates.

Fix the coordinates in which both $h$ and $\bar h$ are flat.  Applying Fact \ref{fact:5}, we see  that these coordinates are  Frobenius for both $g$ and $\bar g$. Using \eqref{s1} and applying Fact \ref{fact:5} to the sum of our Poisson structures, we get that $a^{\alpha \beta}_s + \bar a^{\alpha \beta}_s$ define a commutative associative algebra, while $b^{\alpha \beta}_s + \bar b^{\alpha \beta}_s$ and $h^{\alpha \beta}_s + \bar h^{\alpha \beta}_s$ are Frobenius forms for this algebra, as required.

The inverse statement immediately follows from Facts \ref{fact:4} and \ref{fact:5}. \end{proof}

\begin{proof} [Proof of Theorem \ref{prop:bols13}]

Consider a pair of compatible flat metrics $g, \bar g$ in common Frobenius coordinates $u^1,\dots, u^n$
$$
g^{\alpha\beta}(u) =b^{\alpha\beta} +  a^{\alpha\beta}_s u^s \quad\mbox{and}\quad   \bar g^{\alpha\beta}(u) = \bar b^{\alpha\beta} +  \bar a^{\alpha\beta}_s u^s,
$$

Fact \ref{fact:4} implies that $- \frac{1}{2}a^{\alpha \beta}_s$ and $-  \frac{1}{2}\bar a^{\alpha \beta}_s$ are the contravariant Christoffel  symbols for $g$ and $\bar g$ respectively. Compatibility of $g$ and $\bar g$ means that the contravariant Levi-Civita symbols for the flat metric $g + \bar g$ are the sum of the corresponding symbols for $g$ and $\bar g$, that is, $- \frac{1}{2} a^{\alpha \beta}_s-\frac{1}{2} \bar a^{\alpha \beta}_s$. At the same time these symbols are constant and symmetric in upper indices. Hence the coordinates $u^1,\dots, u^n$ are Frobenius for $g + \bar g$ (Fact \ref{fact:4}).

This, in turn, implies that $a^{\alpha \beta}_s + \bar a^{\alpha \beta}_s$ are the structure constants of a commutative associative algebra and $b^{\alpha \beta}_s + \bar b^{\alpha \beta}_s$ is one of its Frobenius form. Thus, the corresponding Frobenius algebras are compatible.

As $g$ and $\bar g$ are both nondegenerate metrics, this implies that for a generic collection of constants $m^0, m^1, \dots, m^n$, the bilinear forms
$$
h^{\alpha \beta} =m^0 b^{\alpha \beta} + a^{\alpha \beta}_s m^s \text{    and    } \bar h^{\alpha \beta} = m^0 \bar b^{\alpha \beta} + \bar a^{\alpha \beta}_s m^s
$$
are both nondegenerate too. At the same time, each of them is the sum of a Frobenius form ($m^0 b^{\alpha \beta}$ and resp. $m^0 \bar b^{\alpha \beta}$) and trivial form ($a^{\alpha \beta}_s m^s$ and  resp. $\bar a^{\alpha \beta}_s m^s$), which corresponds to $m \in \mathfrak a^*$ with coordinates $m^1, \dots, m^n$ and, thus, is also Frobenius\footnote{Here we use a well known fact for any $m\in \mathfrak a^*$, the form $\xi,\eta \mapsto \langle \xi \star \eta, m\rangle$ is Frobenius, perhaps degenerate. If $\mathfrak a$ has a unity element, then every Frobenius form is of this kind. Otherwise, there might exist other (nontrivial) Frobenius forms.}. As a result, $h$ and $\bar h$ lead us to Frobenius triples $(h, b, \mathfrak a)$ and $(\bar h, \bar b, \bar{\mathfrak a}.)$.

By construction, $h + \bar h$  defines (if nondegenerate) a Frobenius form for the `sum' of the algebras. Thus, we get  compatible Frobenius triples, which yield compatible non-homogeneous Poisson structures $\Pthree_h + \Pone_g$ and $\Pthree_{\bar h}+\Pone_{\bar g}$.  \end{proof}

\section{Proof of Theorem \ref{thm:frobenius1}}  \label{sect:6}

\subsection{Rewriting the existence of Frobenius coordinates    in  a differential-geometric form} \label{subsect:6.1}

We start with the following observation related to  Frobenius coordinate systems (Fact \ref{fact:4}):  $(u^1,\dots, u^n)$  is a Frobenius coordinate system for a metric $g$  if and only if the contravariant Christoffel symbols $\Gamma^{ij}_k = \sum_s g^{si} \Gamma^j_{sk}$ in this coordinate system are constant and symmetric in upper indices.

We denote by $\Gamma, \bar \Gamma$ the Levi-Civita connections of $g$ and $\bar g$.  Assuming that a common Frobenius coordinate system $u^1, \dots, u^n$ exists,  we let $\widehat \Gamma$ be the flat connection whose Christoffel symbols identically vanish in this coordinate system. 
Let $R^i_{\ jk\ell}$, $\bar R^i_{\ jk\ell}$, and $\widehat R^i_{\ jk\ell}$ denote the corresponding curvature tensors. We assume $n\ge 2$, the case $n=1$ is trivial.

Consider the tensors 
\begin{eqnarray*}
S^{ij}_{\ \  k} &:= & \sum_s g^{si} \left(\Gamma^j_{sk} - \widehat \Gamma^j_{sk}\right)   \\
\bar S^{ij}_{\ \ k} &:= & \sum_s \bar g^{si} \left(\bar \Gamma^j_{sk} - \widehat \Gamma^j_{sk}\right)   . 
\end{eqnarray*}

In terms of these tensors, the necessary and sufficient conditions   that the   connection $\widehat \Gamma$ determines Frobenius coordinates are:
\begin{eqnarray}
\label{eq:matv46} 0&=& \widehat R^i_{\ jk\ell} = R^i_{\ jk\ell}= \bar R^i_{\ jk\ell}\\\
\label{eq:matv47} S^{ij}_{\ \ k} &=& S^{ji}_{\ \ k} \label{eq:S} \\ 
\label{eq:matv48} \bar S^{ij}_{\ \ k} &=& \bar S^{ji}_{\ \ k}  \label{eq:barS} \\
\label{eq:matv49} 0&=& \widehat \nabla_m  S^{ij}_{\ \  k}=\widehat \nabla_m  \bar S^{ij}_{\ \  k}. 
\end{eqnarray}

Indeed,  if $(u^1,\dots, u^n)$ is a common Frobenius coordinate system for $g$ and $\bar g$, then in these coordinates $\widehat \Gamma^j_{sk}=0$, 
and  $\Gamma^{ij}_k= g^{is} \Gamma^j_{sk}$  and  $\bar\Gamma^{ij}_k= \bar g^{is} \bar\Gamma^j_{sk}$   are both constant and symmetric in upper indices  by 
Fact \ref{fact:4}. Hence,  \eqref{eq:matv46}-\eqref{eq:matv49} obviously follow.

Conversely,  if \eqref{eq:matv46}-\eqref{eq:matv49} hold, then in the flat coordinates for $\widehat \Gamma^j_{sk}$ we see that $\Gamma^{ij}_k = S^{ij}_{\ \ k}$ and $\bar \Gamma^{ij}_k =\bar S^{ij}_{\ \  k}$ are both symmetric in upper indices due to \eqref{eq:matv47} and \eqref{eq:matv48} and are also constant due to \eqref{eq:matv49}.  Therefore, by Fact \ref{fact:4},   $(u^1,\dots, u^n)$ are Frobenius coordinates for both $g$ and $\bar g$.

\subsection{ General form of the metric in diagonal coordinates} \label{subsect:6.2} 

We work in the coordinates $(x^1,...,x^n)$ such that   \begin{equation}\label{eq:data} 
R_g= \bar g g^{-1} = \textrm{diag}(\ell_1(x^1),...,\ell_n(x^n)) \ , \ \  g_{ij}= \textrm{diag}(\varepsilon_1 e^{g_1},...,\varepsilon_n e^{g_n}),  \end{equation}
  where $g_i$ are local functions on our manifold and $\varepsilon_i\in \{-1,1\}$. Local existence of such coordinates follows from Facts \ref{fact:2} and \ref{fact:3} which imply that  $R_g$ is a Nijenhuis operator and therefore,  according to Haantjes theorem, is diagonalisable and $\ell_i$ depends on $x^i$ only (see also various versions of diagonalisability theorems in \cite{nij1} which, in particular, allows us to include the case of complex eigenvalues too).  	We assume  that all $\ell_i(x^i)$ are different and never vanish.

\begin{Remark}{\rm 
We allow some of the {\it diagonal}  variables  $x^i$ to be  complex. Note that if a variable $x^i$ is complex then by 
 \cite[\S 3]{nij1}  we may assume that 
the corresponding eigenvalue $\ell_i$ is a holomorphic function of $x^i$.   In the first read, we recommend to think of all the eigenvalues as real and then to carefully check that our  proofs  are based on algebraic manipulations and differentiations, which are perfectly defined over complex coordinates, so that  generalisation of the proofs  to complex eigenvalues requires  no change in formulas. See also a discussion at the end of \cite[\S 7]{NijenhuisAppl2}.

Note that  the results we use (e.g. \cite{Prvanovic, Solodovnikov}) are also based on  algebraic manipulations (essentially, on a careful  calculation of the curvature tensor and connection coefficients)  and are applicable if a part of eigenvalues is complex.

Note also that when we work over complex numbers, we may think that  the numbers $\varepsilon_i$ are all equal to $1$. If all the eigenvalues of $R_g$ are real,   objects we will introduce in the proof will automatically be real as well. 
}\end{Remark}

Let us first consider the conditions (\ref{eq:S}, \ref{eq:barS}). We view them as linear (algebraic) system of equations with unknown $\widehat \Gamma^i_{jk}$'s (satisfying also $\widehat \Gamma^i_{jk}=\widehat \Gamma^i_{kj}$)  whose coefficient matrix is constructed from the entries of  $g $ and $L$ and the free terms are constructed from $g, \bar g , \Gamma, \bar \Gamma$.
Being  rewritten in such a way that unknowns are on the left hand side and free terms are on the right hand side, it has the following  form:

\begin{equation} 
\label{eq:oben}
\begin{array} {rcl} 
\varepsilon_i e^{-g_i} \widehat \Gamma^j_{ik}-  
\varepsilon_j e^{-g_j} \widehat \Gamma^i_{jk} & = &  
\varepsilon_i e^{-g_i}   \Gamma^j_{ik}-  
\varepsilon_j e^{-g_j}  \Gamma^i_{jk}\, , \\ 
\ell_i  
\varepsilon_i e^{-g_i} \widehat \Gamma^j_{ik}- \ell_j  
\varepsilon_j e^{-g_j} \widehat \Gamma^i_{jk} & = & \ell_i  
\varepsilon_i e^{-g_i}   \bar  \Gamma^j_{ik}- \ell_j  
\varepsilon_j e^{-g_j}  \bar \Gamma^i_{jk} \, . \end{array}
\end{equation}

We see that for fixed  $i=j=k$, the system bears no information. For fixed   $i\ne j$,  the coefficient matrix $\begin{pmatrix}
 \varepsilon_i e^{-g_i}  &   - \varepsilon_j e^{-g_j}  \\  \ell_i \varepsilon_i e^{-g_i} & - \ell_j \varepsilon_j e^{-g_j}  
\end{pmatrix}$ of the linear system   (\ref{eq:oben}) is  nondegenerate,
since the eigenvalues $\ell_i$ are all different, and therefore the system has a unique solution.

The entries of the connections $\Gamma$ and $\bar \Gamma$ of the diagonal metrics $g_{ij}$ and $\bar g_{ij}:= gL^{-1}$
were calculated many times in the literature, see e.g.
\cite[Lemma 7.1]{NijenhuisAppl2},  and are given by the following formulas: 
\begin{itemize}

    \item $\Gamma^k_{ij} =\bar \Gamma^k_{ij}= 0$ for pairwise different $i, j$ and $k$, 

    \item $\Gamma^k_{kj} = \frac{1}{2}  \frac{\partial g_k}{\partial x^j} $ for arbitrary $k, j$,

    \item $\Gamma^k_{jj} = - \frac{\varepsilon_j}{\varepsilon_i}  \frac{e^{g_j-g_k}}{2} \frac{\partial g_j}{\partial x^k} $ for arbitrary  $k \neq j$,

    \item $\bar \Gamma^k_{kj} = \Gamma^k_{kj}  $ for arbitrary $k\ne  j$,

    \item $\bar \Gamma^i_{ii} = \Gamma^i_{ii}- \frac{\ell_i'}{2\ell_i}  $

    \item $\bar \Gamma^k_{jj} = \frac{\ell_k}{\ell_j}\Gamma^k_{jj}  $ for arbitrary  $k \neq j$.

\end{itemize}

 By direct calculations using these formulas   we obtain that the solution of the  system \eqref{eq:oben} is as follows:
\begin{itemize} 

\item[(A)] 
$\widehat \Gamma^{i}_{ii}  =  u_i $ for all $i$, where $u_i$'s are (yet unknown) functions on the manifold.

\item[(B)]
$\widehat \Gamma^{i}_{ij}= \widehat \Gamma^{i}_{ji}= \frac{\partial g_i}{\partial x^j}$  for $i\ne j$ 

\item[(C)]
$\widehat \Gamma^{i}_{jk}= 0 $  for  all $i\ne j $  and  $k\ne i$ (we allow the case $k=j$).
\end{itemize}

Combining  these with the formulas for $S^{ij}_{\ \ k}$ we obtain:

\begin{itemize} 

\item $S^{ii}_{\ \ i} =  \varepsilon_i e^{-g_i}\left(u_i - \frac{1}{2}\frac{\partial g_i}{\partial x^i}\right)$ for all $i$, 

\item $S^{i i}_{\ \ j} =  \frac{\varepsilon_i}{2}  e^{-g_i}  \frac{\partial g_i}{ \partial x^j},   $  for all $i \ne j$, 

\item $S^{i j}_{\ \  i} = S^{ji }_{\ \  i} =  \frac{\varepsilon_j}{2} e^{-g_j}\frac{\partial g_i}{\partial x^j}$ for all  $i \ne j$, 

\item $S^{i j}_{\ \  k} = 0 $ for all  $i\ne j\ne k\ne i$.
\end{itemize} 

By direct calculations we see that for any $i\ne j \ne k \ne i$ we have 
$\widehat \nabla_k S^{ij}_{\ \  j}= \frac{\varepsilon_i}{2}\frac{\partial^2 g_i}{\partial x^j \partial x^k}$ implying 
\begin{equation} \label{eq:liouville} 0= \frac{\partial^2 g_i}{\partial x^j \partial x^k}. \end{equation}

Next, consider the terms of the form $\widehat \nabla_jS^{ii}_{\ \ i}$ and $\widehat \nabla_iS^{ii}_{\ \ j}$ with $i\ne j$. They are given by 
$$
\widehat \nabla_jS^{ii}_{\ \ i}= 
\varepsilon_i \frac{e^{-g_i}}{2} \left( \frac{\partial g_i}{\partial  x^j }   \frac{\partial g_j}{\partial  x^i}  + \frac{\partial^2  g_i}{\partial  x^i \partial x^j}\right),
$$
$$
\widehat \nabla_iS^{ii}_{\ \ j}=- \varepsilon_i \frac{e^{-g_i}}{2} \left( \frac{\partial g_i}{\partial  x^j }   \frac{\partial g_j}{\partial  x^i}  + \frac{\partial^2  g_i}{\partial  x^i \partial x^j}- 2 \frac{\partial u_i}{\partial x^j}\right).
$$
Since    $\widehat \nabla_jS^{ii}_{\ \ i}=\widehat \nabla_iS^{ii}_{\ \ j} =0$, the  formulas  above    imply  \begin{equation} \label{eq:u} \frac{\partial u_i}{\partial x^j}=0\end{equation} so each $u_i$ is a function of $x^i$ only.

Next, we prove the following Lemma.  Denote by $U_i= U_i(x^i)$ and $U_j= U_j(x^j)$  the primitive functions for  $ e^{\tilde u_i}$ and $  e^{\tilde u_j}$, where
$\tilde u_i$ and $\tilde u_j$ are primitive functions for $u_i$ and $u_j$. By their definition, $U'_i\ne 0$ and $U'_j\ne 0.$

\begin{Lemma} \label{Lem:ansatz}
There exist constants  $C_{ij}$  and $E_{ij}= E_{ji}$ 
such that for the constants $\alpha_{ij}\in \{0, 1\}$ given by the formula 
$$
\alpha_{ij}=\alpha_{ji}= \left\{ \begin{array}{cc} 0 & \textrm{ \ if $C_{ij}=C_{ji}=0$\ }\\
1 & \textrm{ otherwise } \end{array}\right.$$  and 
for any $i\ne j$  the function $$
g_i- \ln\left(|C_{ij} U_i + C_{ji} U_j + E_{ij} |^{\alpha_{ij}} \right)  
$$
does not depend on $x^j$ (we use the convention that $0^0=1$).
\end{Lemma}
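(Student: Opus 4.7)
The plan is to combine the two constraints already established---equation \eqref{eq:liouville}, which gives $\partial_k\partial_j g_i=0$ for pairwise distinct $i,j,k$, and the relation
$$
\partial_j g_i\cdot \partial_i g_j + \frac{\partial^2 g_i}{\partial x^i \partial x^j} = 0 \qquad (i\neq j),
$$
which follows from $\widehat\nabla_j S^{ii}_{\ \ i}=0$ once \eqref{eq:u} is substituted back---with one further ingredient not yet used: the flatness $\widehat R=0$ of the auxiliary connection $\widehat\Gamma$. First, \eqref{eq:liouville} lets us decompose
$$
g_i(x) = G_i(x^i) + \sum_{j\ne i} H_{ij}(x^i,x^j),
$$
so that the displayed equation becomes $\partial_{ij} H_{ij} + \partial_j H_{ij}\cdot \partial_i H_{ji} = 0$.

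Next I would compute the curvature component $\widehat R^i_{\ jij}$ directly from the explicit form (A)--(C) of $\widehat\Gamma$. Only three kinds of terms survive: $\partial_j\widehat\Gamma^i_{ij}$, the product $\widehat\Gamma^i_{ij}\widehat\Gamma^j_{jj}$, and $\widehat\Gamma^i_{ji}\widehat\Gamma^i_{ij}$. Setting the result equal to zero gives
$$
\partial_j^2 g_i = \partial_j g_i\cdot\bigl(u_j(x^j) - \partial_j g_i\bigr).
$$
Viewing this as a Bernoulli ODE in $x^j$ for $p=\partial_j g_i=\partial_j H_{ij}$, the substitution $q=1/p$ linearises it to $q_{,j}+u_j(x^j)\,q = 1$ with integrating factor $e^{\tilde u_j}=U_j'$. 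Solving and integrating once more in $x^j$ (absorbing any purely $x^i$-dependent constant into $G_i$) yields, wherever $\partial_j g_i\not\equiv 0$,
$$
H_{ij}(x^i,x^j)=\log\bigl|U_j(x^j)+\Lambda_{ij}(x^i)\bigr|
$$
for some one-variable function $\Lambda_{ij}$, and symmetrically $H_{ji}(x^i,x^j)=\log\bigl|U_i(x^i)+\Lambda_{ji}(x^j)\bigr|$ from $\widehat R^j_{\ iji}=0$.

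Substituting these into $\partial_{ij}H_{ij}=-\partial_jH_{ij}\cdot\partial_iH_{ji}$ reduces, after division by the nonvanishing factor $U_j'/(U_j+\Lambda_{ij})$, to
$$
\frac{\Lambda_{ij}'(x^i)}{U_j(x^j)+\Lambda_{ij}(x^i)} = \frac{U_i'(x^i)}{U_i(x^i)+\Lambda_{ji}(x^j)}.
$$
Cross-multiplying yields $\Lambda_{ij}'U_i - U_i'\Lambda_{ij} = U_i'U_j - \Lambda_{ij}'\Lambda_{ji}$, whose left-hand side depends only on $x^i$. Differentiating the right-hand side in $x^j$ gives $U_i'U_j'=\Lambda_{ij}'\Lambda_{ji}'$, which separates and forces the existence of a pair-dependent constant $C\neq 0$ with $\Lambda_{ij}'=CU_i'$ and $\Lambda_{ji}'=U_j'/C$. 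A short check of the remaining constants then gives $\Lambda_{ij}=CU_i+C_0$ and $\Lambda_{ji}=(U_j+C_0)/C$, whence
$$
H_{ij}(x^i,x^j)=\log\bigl|CU_i(x^i)+U_j(x^j)+C_0\bigr|,
$$
which is precisely the desired form with $C_{ij}=C$, $C_{ji}=1$ and $E_{ij}=C_0$; the relation $E_{ij}=E_{ji}$ is immediate because $H_{ji}$ differs from $H_{ij}$ only by the additive constant $-\log|C|$, which is absorbed into $G_j$. The degenerate case $\partial_j g_i\equiv 0$ corresponds to $\alpha_{ij}=0$ with the convention $|\cdot|^0=1$, and the equality $\alpha_{ij}=\alpha_{ji}$ reflects the already observed fact that $H_{ij}$ and $H_{ji}$ coincide up to single-variable functions.

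The principal technical obstacle is the bookkeeping for $\widehat R^i_{\ jij}$: the irregular (A)--(C) description of $\widehat\Gamma$ admits a number of a priori non-zero contractions, and one must enumerate carefully which index combinations in the sums $\widehat\Gamma^i_{im}\widehat\Gamma^m_{jj}$ and $\widehat\Gamma^i_{jm}\widehat\Gamma^m_{ij}$ actually contribute. Once the Bernoulli identity is in hand, the remainder is standard linear ODE theory plus elementary separation of variables.
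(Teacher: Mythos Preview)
Your argument is correct and uses the same curvature identities as the paper, but the logical organisation differs.  The paper treats the four conditions $\widehat R^{i}_{\ iji}=\widehat R^{i}_{\ jji}=\widehat R^{j}_{\ jji}=\widehat R^{j}_{\ iji}=0$ as a closed first-order Cauchy--Frobenius system for the pair $(a,b)=(\partial_j g_i,\partial_i g_j)$, checks its integrability, and then \emph{guesses} the two-parameter family of solutions $a=\tfrac{C_j U_j'}{C_iU_i+C_jU_j+E}$, $b=\tfrac{C_i U_i'}{C_iU_i+C_jU_j+E}$; only afterwards does it invoke \eqref{eq:liouville} to kill the dependence of $C_i,C_j,E$ on the remaining variables.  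You instead spend \eqref{eq:liouville} at the outset to write $g_i=G_i(x^i)+\sum_{j\ne i}H_{ij}(x^i,x^j)$, then extract from $\widehat R^{i}_{\ jij}=0$ the single Bernoulli equation $\partial_j^2 g_i=\partial_j g_i\,(u_j-\partial_j g_i)$, linearise and solve it to get $H_{ij}=\log|U_j+\Lambda_{ij}(x^i)|$, and finally determine $\Lambda_{ij}$ from the mixed relation.  This avoids the ansatz step entirely and makes the constancy of $C,C_0$ automatic, since your $\Lambda_{ij}$ and $\Lambda_{ji}$ are one-variable from the start; the price is a somewhat longer separation-of-variables computation.

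One small point to tighten: your closing sentence identifies ``$\partial_j g_i\equiv 0$'' with ``$\alpha_{ij}=0$'', but the lemma allows the asymmetric case $\partial_j g_i\not\equiv 0$, $\partial_i g_j\equiv 0$, which corresponds to $C_{ij}=0$, $C_{ji}\ne 0$, $\alpha_{ij}=1$.  Your machinery covers it---the mixed equation then forces $\Lambda_{ij}'=0$, giving $H_{ij}=\log|U_j+E_{ij}|$---but it is not the ``degenerate'' case and deserves a separate line.
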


\begin{proof}
We consider the curvature tensor $\widehat R^i_{\ j k \ell} $ of the connection $\widehat \Gamma$. To compute it, we need to substitute $\widehat \Gamma$ given by (A,B,C) above into   
 the standard formula for the curvature
\begin{equation}  \label{standard} 
   \widehat R^\ell_{\ ijk} = \tfrac{\partial  }{\partial x^j} \widehat \Gamma^\ell_{ik} - \tfrac{\partial  }{\partial x^k} \widehat \Gamma^\ell_{ij}+  \sum_{s}\left(\widehat \Gamma^\ell_{js} \widehat \Gamma^s_{ik} - \widehat \Gamma^\ell_{ks} \widehat \Gamma^s_{ij}\right).  \end{equation}
We obtain for $i\ne j$:
\begin{eqnarray}
0=\widehat R^{i}_{\ i j i}  & = & - {\frac {\partial {g_i}}{\partial x^{{j}}}} \frac {\partial {g_j}}{\partial 
x^{{i}}}   -{\frac {\partial^{2} g_i  }{\partial x^{{j}}
\partial x^{{i}}}}  
\\
0=\widehat R^{i}_{\ j j i} &= &  
-  {\frac {\partial  g_i}{\partial x^{{j}}}} u_j +         \left( \frac {\partial { g_i} }{\partial x^{{j
}}}\right)^2+{\frac {\partial^2 g_i}{\partial {{x^{{j}}}^2 }   }}\\
0=\widehat R^{j}_{\ j j i} &= &  
  \frac {\partial g_i}{\partial x^{{j}}}   \frac {\partial g_j }{\partial x
^{{i}}} +{
\frac {\partial^{2} g_j}{\partial x^{{i}}\partial x^{{j}} }}\\
0=\widehat R^j_{ \ i j i  } &= & 
- \left( {\frac {\partial g_j}{\partial x^{{i}}}}  \right) ^{2}+ {u_i}  {\frac {
\partial g_j}{\partial x^{{i}}}}   \  -{\frac {\partial ^{2} g_j }{\partial { {x^{{i}}}^2   }}}.\end{eqnarray} 
We view 4 equations above a system of PDEs on  the unknown functions 
\begin{equation} \label{eq:sub} a= \frac{\partial g_i}{\partial x^j} \  \textrm{and} \   b= \frac{\partial g_j}{\partial x^i}.\end{equation} 
The condition \eqref{eq:u} implies that  the coefficients of the system depend on $x^i$ and $x^j$ and we may temporarily 
`forget' all other variables.  The system then has the following form:
\begin{equation}\label{eq:pdeab} 
  \frac {\partial a}{\partial x^{i}}  =-a  b  \ , \ \frac {\partial a }{\partial x^{{j}}}  =- a^{2}+a
  {u_j} \ , \ 
\frac{\partial  b}{\partial x^i} =
-  b^{2}+b u_i \  , \ \frac{\partial b}{\partial x^j}  =-a  b . 
\end{equation}
This  system is of Cauchy-Frobenius type (in the sense that all  first derivatives of unknown functions  are explicit expressions of the unknown functions and variables).   By direct computation we check that its integrability conditions hold identically.  Then, 
its solution depends on an  arbitrary choice of the values of $a$ and $b$ at one  arbitrarily chosen point   $p_0$. 
 Note that if $a(p_0)= 0$ then  $a$ is identically $0$, the same is true for $b$.

 By direct substitution we see that  for any constants  $C_i,C_j,E$,  the pair of functions 
\begin{equation}
\label{eq:sola} 
a= \frac{C_j U'_j}{C_i U_i + C_i U_j +E}, \quad b= \frac{C_i U'_i}{C_i U_i + C_j U_j+E}
\end{equation}  
satisfies the equation. In the case  $C_i=C_j=0$ we think that $a$ and $b$ given by \eqref{eq:sola} vanish identically.  
 The functions $U_i= U_i(x^i)$ and $U_j= U_j(x^j)$ used in \eqref{eq:sola}  are as defined before Lemma \ref{Lem:ansatz}. 
 By varying the constants $C_i,C_j, E$, one  can get any nonzero  initial values $a(p_0), b(p_0)$  so this is indeed a general solution.

In the case $C_i\ne 0$ or $C_j\ne 0$,    using \eqref{eq:sub}, we obtain  
 \begin{equation}
 \label{eq:solg}
g_i= \ln (|C_i U_i + C_j U_j +E |)  + D_i  \ \textrm{ and }  \   g_j= \ln (|C_i U_i + C_j U_j+E|)  + D_j
\end{equation}
with $D_i$ independent of $x^j$ and  $D_j$ independent of
$x^i$. In the case $C_i=0=C_j$ we obtain that  $g_i$ is independent of $x^j$ and  $g_j$  is independent of
$x^i$ automatically.

Note that if we `remember' all the coordinates, then $C_i$,  $D_i$, $E$   may also  depend on all other  variables  
$x^k, $  $k\not\in \{ i, j\}$.

Let us study the dependence of $C_i$,  $C_j$ and $E$ on the variable $x^k$ with $k \not\in \{i,j\}$. We first consider the case when $C^i\ne 0$ and $C^j\ne 0$.  
We  observe that by \eqref{eq:liouville},  for $i\ne j\ne k\ne i$,  we have 
$$ 
0=\frac{\partial^2   g_{i}}{\partial x^j \partial x^k}\stackrel{(\ref{eq:sub}, \ref{eq:sola})}{=} 
 \frac{\partial }{\partial x^k}\frac{C_j U'_j}{C_i U_i + C_j U_j + E}= 
-\frac{U'_j\left(U_i\frac{\partial }{\partial x^k}\frac{C_i}{C_j} + \frac{\partial }{\partial x^k} \frac{E}{C_j}\right) }{(\tfrac{C_i}{C_j} U_i + U_j + \tfrac{E}{C_j})^2}
$$ 
implying that the  ratios  $C_i/C_j$  and  $ {E}/{C_j}$  are constant. Note that  $U'_j\ne 0$ since it is a primitive function for a nonvanishing function. Then, we may assume that $C_i$, $C_j$ and $E$ are constants, since in the formula \eqref{eq:solg} 
their dependence   on other variables can be hidden in $D_i$ and $D_j$.

In the  cases $C_i=0$ or $C_j=0$ (but not $C_i=0=C_j$ both)  the dependence of $C_i$,  $C_j$  and $E$  on other variables can similarly  be hidden in $D_i$ and $D_j$.  
In the remaining case, when $C_i=0=C_j$, we already know  that  $g_i$ is independent of $x^j$ and  $g_j$ is independent of
$x^i$.

Thus,  in all cases  $g_i- \ln (|C_i U_i + C_j U_j + E|^{\alpha_{ij}})$   does not depend on $x^j$ and $g_j- \ln (|C_i U_i + C_j U_j +E |^{\alpha_{ij}})$ does not depend on $x^i$.
 Since  constants   $C_i$, $C_j$ and $E$  are constructed for fixed $i$ and  $j$, in what follows we denote  them $C_{ij}$,   $C_{ji}$ and $E_{ij}=E_{ji}$ respectively. In this notation,  the function  $g_i- \ln (|C_{ij} U_i + C_{ji} U_j +E_{ij} |^{\alpha_{ij}})$ does not depend on $x^j$,  as stated.
\end{proof}

Consequently applying the Lemma, we see that 
$$
g_i -\sum_{s\ne i} \ln \left(|C_{ij} U_i + C_{is} U_s + E_{ij} |^{\alpha_{is}}\right) 
$$
depends on $x^i$ only.

Therefore, the $i$th diagonal component $g_{ii}$ of the  metric $g$ is as follows: 
\begin{equation}
\label{subg}
g_{ii}= \varepsilon_i e^{g_i} = h_i(x^i) \left(\prod_{s\ne i} \left( C_{is} U_i(x^i)+ C_{si} U_s(x^s)+ E_{is}\right)^{\alpha_{is}}  \right)
\end{equation}
for some functions $h_i$ of one variable.

 \subsection{Last step of the proof of Theorem \ref{thm:frobenius1}: making all $C_{ij}$ equal $\pm1$} \label{sec:LC} 

In the previous section we have proved that the metric $g$  is given by \eqref{subg}. Observe that  by the assumptions of Theorem \ref{thm:frobenius1}, the diagonal coordinates depend on all variables,  so  all $\alpha_{ij}=1$ and all $C_{ij}\ne 0$ for $i\ne j$. 
First note that  in the case when all $C_{ij}=1$ for $i<j$,  $C_{ij}= -1$ for $i>j$ and $E_{ij}=0$ for $i \ne j$, the diagonal
 metric $g$ is   in the so-called  {\it Levi-Civita} form: 
 \begin{equation}
 \label{eq:LCform}
g_{ii}=  \left(\prod_{j\ne i} ( U_i(x^i)- U_j(x^j)) \right)h_i(x^i).
\end{equation}

In this section we show  that   one can bring the metric to the  form \eqref{eq:LCform} by certain  `admissible' 
 operations which include only coordinate transformations and renaming of  functions. Combining this with a result of A. Solodovnikov (Fact \ref{rem:solodovnikov})  will prove Theorem \ref{thm:frobenius1}.

 We will use  the condition 
$
\widehat \nabla_i S^{ij}_{\ \ k}=0.
$ 
Assuming  $i\ne j \ne k \ne i$, this condition reads 
\begin{equation}
\label{eq:third} 
-\frac{\varepsilon_j}{2} e^{- g_j }   \left( \frac {\partial g_i }{\partial x^j
}  
\frac{\partial g_i}{\partial x^k}  -  \frac{\partial g_i}{\partial x^j}
  \frac {\partial g_j}{\partial x^k} -  \frac {\partial g_i }{\partial x^k} 
\frac {\partial g_k }{\partial x^j}  \right)=0.
\end{equation}
Substituting \eqref{subg} in it, we see that the  following relation holds: 
 \begin{equation}
 \label{eq:detijk} 
 0=C_{{ik}}C_{{ji}}C_{{kj}}-C_{{ij}}C_{{jk}}C_{{ki}}=\textrm{det} 
 \begin{pmatrix} 
 0 & C_{ij} & C_{ik}\\ 
-C_{ji} & 0 & C_{jk} \\ -C_{ki} & -C_{kj} & 0
\end{pmatrix}. 
\end{equation}

Let us now use the relation \eqref{eq:detijk} and `make'  $C_{ij}=1$ for $i<j$ and $C_{ij}= -1$ for $i>j$. 
 We will  use the following operations for it:
\begin{itemize}
\item[(a)] 
We can multiply the  factor $(C_{ij} U_i+ C_{ji} U_j+ E_{ij})$ in  the  $i$th and $j$th  diagonal components of $g$  (given by  \eqref{subg})  by a   nonzero  constant   and correspondingly change $h_i$ and 
 $h_j$ by dividing them  by the same constant.

\item[(b)] For every fixed pair $i\ne j$ if $C_{ij}\ne 0$ 
we  can rename $C_{ij} U_i$ by $U_i$ (which we will do if $i<j$)  or by $-U_i$ (if $i>j$).

\end{itemize}

By applying the operation (a), we  make $C_{1i}=1$ for all $i\ne 1$. By applying the operation (b), we  make   and $C_{i1}=-1$
 for all $i\ne 1$.  In addition, by applying operation (a) we  make $C_{2i}=1$ with $i>2$.   Then, the first two rows and the first column of the matrix $C_{ij}$ are  as we want. 
Then, the condition \eqref{eq:detijk} with $i=1, j=2$  arbitrary $k>2$  reads $C_{k2}=-1$
 so the second column is automatically   as we want.  Then, applying operation (a) we make all $C_{3k}$ with  $k>3$ equal to $1$.
 The  condition \eqref{eq:detijk} with $i=1, j=3$  arbitrary $k>3$  reads $C_{k3}=-1$
and implies that  the third  column is  as we want. 
Repeating the  procedure, we bring the metrics in  the Levi-Civita form \eqref{eq:LCform}.

Let us now   take  $U_i(x^i)$ as a local coordinate system: $ x^i_{\mathsf{new}}:=U_i(x^{i}_{\mathsf{old}})$. We can do it  because the derivative of $U_i$ is not zero.
	In the new coordinates $R_g$ is still diagonal and the $i^{\textrm{th}}$ diagonal component depend on the variable $i$ only. 
	In these coordinates, the  metric \eqref{eq:LCform}
	is diagonal with 
\begin{equation*}
g_{ii}= \left(\prod_{j\ne i}^n (  x^i-  x^j +E_{ij}) \right)\frac{1}{H_i(x^i)}, 
\end{equation*}
where $E_{ij}$ is now a skew-symmetric constant matrix. 
In these coordinates   the condition \eqref{eq:third} is equivalent to $E_{jk} + E_{ki} + E_{ij}=0$.  An easy exercise in Linear Algebra shows the  existence of 
 constants $E_1,...,E_n$ such  that $E_{ij}=E_i-E_j$. Moreover, the    constants $E_i$ are uniquely defined up to adding a common  constant $E$ to all of them.  Hence, after  the  coordinate change 
$x_{\textrm{new}}^i= x_{\textrm{old}}^i + E_i$,  the metric is diagonal with  
\begin{equation}
\label{eq:solodovnikov}
g_{ii}= \left(\prod_{j\ne i}^n (  x^i-  x^j ) \right)\frac{1}{H_i(x^i)}. 
\end{equation}

Therefore,  the  metric $\bar g= L \bar g$ is also diagonal with  
\begin{equation}
\label{eq:solodovnikovbar}
\bar g_{ii}=  \left(\prod_{j\ne i}^n (  x^i-  x^j) \right)\frac{1}{\bar H_i(x^i)} 
\end{equation}

\begin{Fact} \label{rem:solodovnikov}\label{fact6}
 The diagonal metric of form  \eqref{eq:solodovnikov} {\rm(}resp. \eqref{eq:solodovnikovbar}{\rm)} in dimension at least two 
  has  constant curvature  if and only if there exists a polynomial  $P$ {\rm(}resp. $Q${\rm)}  of degree $\le n+1$ such that 
	$H_i(x^i)=P(x^i)$  {\rm(}resp. $\bar H_i(x^i)= Q(x^i)${\rm)}. 
	Moreover, the curvature of the  metric   vanishes if and only if the polynomial $P$ {\rm(}resp. $Q${\rm)}   has  degree $\le n$.  
\end{Fact}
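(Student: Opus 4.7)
\textbf{Proof plan for Fact \ref{fact6}.}
The statement is an equivalence for a diagonal metric, so I would proceed by a direct computation of the sectional curvatures of the coordinate $2$-planes and then separate variables. Set $V_i := \prod_{k\ne i}(x^i-x^k)$ so that $g_{ii} = V_i/H_i(x^i)$. The key structural feature is that in every variable $x^j$ with $j\ne i$, only the single factor $(x^i-x^j)$ of $V_i$ depends on $x^j$, hence
\[
\frac{\partial \ln g_{ii}}{\partial x^j}=-\frac{1}{x^i-x^j}\quad (j\ne i),\qquad
\frac{\partial \ln g_{ii}}{\partial x^i}=\sum_{k\ne i}\frac{1}{x^i-x^k}-\frac{H_i'(x^i)}{H_i(x^i)}.
\]
This makes all mixed derivatives $\partial_k g_{ii}$ with $k\ne i$ independent of $H_k$, which is what eventually separates the variables.

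The first step is to write down the standard formula for the Riemann tensor of an orthogonal metric,
\[
R_{ijij}=-\tfrac{1}{2}\bigl(\partial_i^{\,2}g_{jj}+\partial_j^{\,2}g_{ii}\bigr)+\text{quadratic in }\partial g,
\]
substitute the above logarithmic derivatives, multiply by $V_iV_j$ and clear denominators. I expect that all contributions from variables $x^k$ with $k\notin\{i,j\}$ collapse (using the partial-fraction identity $\tfrac{1}{(x^i-x^k)(x^j-x^k)}=\tfrac{1}{x^i-x^j}\bigl(\tfrac{1}{x^j-x^k}-\tfrac{1}{x^i-x^k}\bigr)$) into a clean separated form. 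After this reduction, the condition $K_{ij}=K$ for the sectional curvature of the $(\partial_i,\partial_j)$-plane should take the shape
\[
\frac{H_i(x^i)-H_j(x^j)}{x^i-x^j}\;=\;\Phi_{ij}(x^i,x^j)\;+\;K\cdot\Psi_{ij}(x^i,x^j),
\]
where $\Phi_{ij}$ and $\Psi_{ij}$ are explicit symmetric polynomials in $x^i,x^j$ (of degrees $\le n-1$ and $\le n$ respectively) with coefficients built out of the remaining variables $x^k$. Requiring that both sides be independent of every $x^k$ with $k\ne i,j$ kills the $x^k$-dependence of $\Phi_{ij}$ and $\Psi_{ij}$; comparing degrees then forces $H_i(t)$ to be a polynomial in $t$ of degree $\le n+1$, and the resulting identity $(H_i(x^i)-H_j(x^j))/(x^i-x^j)=\text{sym.\ polyn.}$ is equivalent to $H_i(t)=H_j(t)=:P(t)$ for a common polynomial $P$.

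Finally, tracking the leading coefficient $p_{n+1}$ of $P$ through the computation, I expect the formula $K=c\,p_{n+1}$ with a nonzero universal constant $c$, giving the dichotomy: $\deg P\le n+1$ for constant curvature and $\deg P\le n$ for the flat case. The converse is then a direct substitution: plugging $H_i(t)=P(t)$ back into the simplified expression for $R_{ijij}$ verifies mechanically that the metric has constant sectional curvature $K=c\,p_{n+1}$, and the off-diagonal curvature components $R_{ijik}$ and $R_{ijk\ell}$ ($i,j,k,\ell$ distinct) vanish identically because of the same logarithmic-derivative structure. The argument for $\bar g$ and $Q$ is identical.

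\textbf{Main obstacle.} The real work is in the first reduction: showing that after substitution of the specific form $g_{ii}=V_i/H_i(x^i)$, all contributions involving $H_k,H_k'$ with $k\notin\{i,j\}$ drop out of $R_{ijij}$ and the remaining expression separates into a function of $(x^i,H_i,H_i',H_i'')$ and a function of $(x^j,H_j,H_j',H_j'')$ joined by an elementary symmetric kernel. This is a delicate but purely algebraic manipulation of partial fractions in the differences $x^a-x^b$; it is precisely the calculation behind Solodovnikov's classical theorem and is insensitive to whether some of the $x^i$ are complex (so it applies in the setting of Remark \ref{Rem:3.1bis} as well).
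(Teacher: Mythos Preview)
The paper does not actually prove Fact~\ref{fact6}; it simply states that the result ``was proved in \cite[\S 5]{Solodovnikov} and easily follows from calculations in \cite[\S 7]{NijenhuisAppl2}.'' So there is no in-paper argument to compare against --- what you have written is essentially a reconstruction of Solodovnikov's classical computation, which is the intended reference.

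Your plan is the right one and the structural observations (the logarithmic derivatives of $g_{ii}$, the partial-fraction identity for $1/\bigl((x^i-x^k)(x^j-x^k)\bigr)$, and the eventual separation into a divided-difference $\bigl(H_i(x^i)-H_j(x^j)\bigr)/(x^i-x^j)$) are exactly what drives the argument. Two comments. First, the phrase ``requiring that both sides be independent of every $x^k$'' is slightly misleading: in this St\"ackel-type metric the $x^k$-dependence of $K_{ij}$ for $k\notin\{i,j\}$ cancels \emph{identically} after the partial-fraction reduction, regardless of the $H$'s --- it is not an extra condition but a feature of the Vandermonde structure. The genuine constraints come from demanding that the remaining expression in $(x^i,x^j)$ be a constant, which forces the divided difference to be a polynomial of the right degree and hence $H_i=H_j=P$. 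Second, the constant you call $c$ is $-\tfrac14$: the paper records this explicitly as $K=-\tfrac14 a_{n+1}$ in Fact~\ref{Fact:cas}. With those adjustments your outline is sound; the ``main obstacle'' you flag is indeed the only real work, and it is a finite (if tedious) algebraic verification.
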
 
Fact \ref{rem:solodovnikov} was proved in  \cite[\S 5]{Solodovnikov} and easily follows from  calculations in  \cite[\S 7]{NijenhuisAppl2}.

Taking $L=\textrm{diag}(x^1,...,x^n)$ and the (contravariant) Levi-Civita metric 
\begin{equation}
\label{eq:solodovnikovtilde}
g_{\LC}= \sum_{i} \left(\prod_{j\ne i}^n (  x^i-  x^j)\right)^{-1} \left(\tfrac{\partial}{\partial {x^i}} \right)^2,  
\end{equation}
we see that $g = P( L) g_{\LC}$ and $ \bar g = Q(L)  g_{\LC}$, which completes the proof of Theorem 
\ref{thm:frobenius1}.  (The `uniqueness' part of Theorem \ref{thm:frobenius1} will be explained in Remark \ref{last:step:proof}).

\section{  Proof of Theorem \ref{thm:frobenius2}} \label{sect:7}

\subsection{Upperblockdiagonal structure of the matrix $C_{ij}$}\label{subsect:7.1}

We assume that the metric $g$ is diagonal and its diagonal elements have the form  
\begin{equation}
\label{61}
g_{ii}=  h_i(x^i) \prod_{s\ne i} \left( C_{is} x^i+ C_{si} x^s+ E_{is}\right)^{\alpha_{is}}  , 
\end{equation}
which is the form 
\eqref{subg} in the `new' coordiante system $ x^i_{\mathsf{new}}:=U_i(x^{i}_{\mathsf{old}})$. 
  We view  $C_{ij}$ as entries of an  $n\times n$-matrix and $E_{ij}$ as entries of  an  $n\times n$ symmetric matrix.   
Since the   diagonal elements $C_{ii}, E_{ii}$  do not come into the formula for $g$, we assume they are zero.

We will first  re-arrange  the coordinates $x^1,...,x^n$  and make   the matrix
 $C$ upperblockdiagonal such that  in every diagonal block  all  nondiagonal entries are different from zero.  
We will need the following Lemma:

\begin{Lemma}  \label{Lem:2} 
If $C_{ji}=0$ for certain different $i,j\in \{1,...,n\}$,
 then  for any $k\in \{1,...,n\}$ we have $C_{jk} C_{ki}=0$. Moreover, if in addition $C_{ij}=0$, then for any   $k\in \{1,...,n\}$ we have 
$C_{ik}C_{jk}=0$.
\end{Lemma}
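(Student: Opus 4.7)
The plan is to derive both assertions from the single third-order identity \eqref{eq:third}, which was already established from $\widehat\nabla_i S^{ij}_{\ \ k}=0$ in Section \ref{sec:LC}. That derivation used only the diagonal form of $g$ together with the formulas for $\widehat\Gamma$ and $S$ obtained in Section \ref{subsect:6.2}, and did not invoke the genericity assumption of Theorem \ref{thm:frobenius1}; hence the identity remains valid for every metric of the form \eqref{61}. Explicitly, for all pairwise distinct $i,j,k$,
\[
\frac{\partial g_i}{\partial x^j}\,\frac{\partial g_i}{\partial x^k}-\frac{\partial g_i}{\partial x^j}\,\frac{\partial g_j}{\partial x^k}-\frac{\partial g_i}{\partial x^k}\,\frac{\partial g_k}{\partial x^j}=0.
\]
Differentiating \eqref{61} one computes $\partial_j g_i=\alpha_{ij}C_{ji}/(C_{ij}x^i+C_{ji}x^j+E_{ij})$, which vanishes identically on $U$ if and only if $C_{ji}=0$; otherwise it is a nonzero rational function whose numerator is the constant $C_{ji}$.

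\textbf{First claim.} Assume $C_{ji}=0$. For $k\in\{i,j\}$ the equality $C_{jk}C_{ki}=0$ is trivial by the convention $C_{ii}=C_{jj}=0$, so assume $k\ne i,j$. Since $\partial_j g_i\equiv 0$, the first two summands in the identity vanish identically, leaving $\partial_k g_i\cdot\partial_j g_k\equiv 0$. The numerators of these two rational functions are $C_{ki}$ and $C_{jk}$, so their product can vanish identically only if $C_{ki}C_{jk}=0$, which is the asserted relation.

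\textbf{Second claim.} Assume additionally $C_{ij}=0$; again only $k\ne i,j$ requires attention. Swapping the roles of $i$ and $j$ in the first claim yields $C_{kj}C_{ik}=0$. If $C_{ik}=0$ or $C_{jk}=0$ the conclusion $C_{ik}C_{jk}=0$ is immediate, so we may assume $C_{ik}\ne 0\ne C_{jk}$; the two relations just obtained then force $C_{ki}=0$ and $C_{kj}=0$. The key step, and the main subtlety of the argument, is to apply \eqref{eq:third} once more with the triple $(k,i,j)$ taking the role of $(i,j,k)$. This gives
\[
\partial_i g_k\cdot\partial_j g_k-\partial_i g_k\cdot\partial_j g_i-\partial_j g_k\cdot\partial_i g_j=0.
\]
The last two summands vanish, since $\partial_j g_i\equiv 0$ (from $C_{ji}=0$) and $\partial_i g_j\equiv 0$ (from $C_{ij}=0$). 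The remaining term has numerator $C_{ik}C_{jk}\ne 0$, while its denominator $(C_{ik}x^i+E_{ki})(C_{jk}x^j+E_{kj})$ does not vanish identically because $C_{ki}=C_{kj}=0$ and $C_{ik},C_{jk}\ne 0$. This contradicts the identity, so the case $C_{ik}\ne 0\ne C_{jk}$ is impossible and $C_{ik}C_{jk}=0$ in every case.

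The only nontrivial observation in the whole proof is the use of the cyclically relabelled form of \eqref{eq:third} with indices $(k,i,j)$ rather than $(i,j,k)$; everything else is bookkeeping of which partial derivatives vanish identically under each hypothesis. No further ingredients beyond \eqref{eq:third} and the explicit form \eqref{61} are needed.
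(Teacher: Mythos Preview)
Your proof is correct and follows essentially the same approach as the paper, deriving both claims from the identity \eqref{eq:third}. For the second claim, however, you take an unnecessary detour: the preliminary reduction to $C_{ki}=C_{kj}=0$ is never really used, since once you apply \eqref{eq:third} with the relabelled triple $(k,i,j)$ and use $\partial_j g_i=\partial_i g_j=0$, the product $\partial_i g_k\cdot\partial_j g_k$ vanishes identically, giving $C_{ik}C_{jk}=0$ directly. The paper does exactly this, phrased as ``under $C_{jk}=C_{kj}=0$ equation \eqref{eq:third} yields $C_{ji}C_{ki}=0$; now rename $i\leftrightarrow k$''.
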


\begin{proof}
For $k=i$ or $k=j$ the statement follows from our convention $C_{ii}=C_{jj}=0$, further we assume $i\ne k\ne j(\ne i)$.  

 We consider  the equation  \eqref{eq:third}: under the assumption $C_{ji}=0$ the terms 
$ \frac {\partial g_i }{\partial x^j}  \frac{\partial g_i}{\partial x^k}$ and
$\frac{\partial g_i}{\partial x^j}  \frac {\partial g_j}{\partial x^k}$   vanish. Then, the equation reads 
$
\frac {\partial g_i }{\partial x^k} 
\frac {\partial g_k }{\partial x^j} =0
$
and implies that $\frac {\partial g_i }{\partial x^k} =0$ (which in turn implies $C_{ki}=0$) or $\frac {\partial g_k }{\partial x^j} =0$
(which in turn implies $C_{jk}=0$). This proves the first statement of the lemma. Next, observe that under the assumption $C_{jk}=C_{kj}=0$ the equation 
\eqref{eq:third} reads $ \frac {\partial g_i }{\partial x^j} 
\frac {\partial g_i }{\partial x^k} =0$ implying $C_{ki}C_{ji}=0$. Renaming $i\leftrightarrow k$  finishes the proof. 
 \end{proof}

Next, consider  
  $i\in \{1,...,n\}$  such that the $i^{\textrm{th}}$ column of the matrix $C_{ij}$ contains the maximal number of zero entries.  We  assume without loss of generality that $i=1$,  that the elements $C_{21},...,C_{d 1}$ are not zero and the other 
	elements of the first column are zero.   Applying Lemma \ref{Lem:2} to the element $C_{d' 1}$ with $d'>d$, we obtain that 
	$
	C_{d'k} C_{k1}=0. 
	$
	Since $C_{k1}\ne 0$ for $k\le d$, we obtain 	$C_{d'k}=0$
	for such $k.$

  Thus, all elements of the matrix $C_{ij}$ staying under the upper left $d\times d$ block are zero. 
  If $C_{ij}=0$ with $i\ne j \in \{1,...,d\}$,    we obtain a contradiction with  the assumption that 	the $i^{\textrm{th}}$ column of the matrix $C_{ij}$ contains the maximal number of zeros. Thus, all $C_{ij}$
	with $i\ne j \in \{1,...,d\}$	are not zero.  Thus, the first $d$ columns of $C_{ij}$ 
	are as in the upperblockdiagonal matrix with the first block of dimension  $d\times d$. 
	We further have that all nondiagonal components of the  first block are different from zero.

	Next, consider the index $i\in \{d+1,...,n\}$ such that the number of zero entries in the columns of lower 
	right  $(n-d)\times (n-d)$ block is maximal. We may assume without loss of generality 
	that $i={d+1}$, that the components $C_{d+1 \  d+2}, ..., C_{d+1   \  d+ d'}$ are not  zero and the components 
$C_{d+1  \ d+d'+1}, ..., C_{d+1 \    n}$ are zero. Arguing as above, using Lemma \ref{Lem:2},  
we obtain that  for any $k\in \{d+1,...,d+d' \}$ 
the components $C_{d+k \ \ \ d+d'+1}, ..., C_{d+k \  \  n}$ are zero. Thus, the first $d+d'$ 
columns of $C_{ij}$ are as in the upperblockdiagonal matrix with the first block of dimension  $d\times d$ and the second block of dimension $d'\times d'$. Moreover, by the `maximality'  condition in our choice of the first column of the second block,  all nondiagonal elements of the second block are nonzero.

We can repeat the procedure further   and further and obtain that the matrix $C_{ij}$ is as we claimed: it is 
upperblockdiagonal and   in every block  all  nondiagonal entries are different from zero. 
Let us explain now that by the operations (a,b) from  Section \ref{sec:LC}  we can make $C_{ij}$ in every block  equal $1$ for $i<j$  and $-1$ for $j>i$. 
Indeed,  by applying the operations (a) and (b) we can make the first two rows and the first column of every block to be as we claimed.
 The condition \eqref{eq:detijk}  automatically  implies that the  second column of the block is as we want. 
 Next, applying operation (a) we make  the third row as we want. Then,  \eqref{eq:detijk} 
implies that  the third  column is  as we want and so on.
 Note that  these operations with one block do not affect other   blocks.

Next, let us construct a $B\times B$ matrix $(c_{\alpha\beta})$. 
We will denote by $B_{\alpha \beta}$ the blocks  of the matrix $C$ (corresponding to the decomposition $n=n_1+...+n_B$),   the block $B_{\alpha\beta}$ has dimension  $n_\alpha\times n_\beta$. 
Above we achieved  that   if $\alpha\ne \beta$, then either  all   entries of $B_{\alpha\beta}$  are    zero or are   equal to 1.  
In the first case we put $c_{\alpha \beta}=0$, in the second case $c_{\alpha \beta}=1$.
If $\alpha>\beta$ then all entries of the block $B_{\alpha\beta}$ are zero, so such $c_{\alpha\beta}=0$.  
We put $c_{\alpha\alpha}=0$.

Next we  show  that if  such a block $B_{\alpha \beta}$ with $\alpha<\beta$ is zero and the block $B_{\alpha \alpha'}$ is not, then all the blocks $ B_{\alpha' \beta} $  with $\alpha'\ne \beta$ are also zero. In order to do it,   we take an element $C_{ij}$ of this block. By Lemma \ref{Lem:2}, if $c_{\alpha \beta}=0$ with $\alpha\ne \beta$,  then for any $s\not \in \{\alpha,\beta\}$ we have $c_{\alpha s}c_{s \beta}=0$ implying the claim. 
Analogously one shows, using the second statement of Lemma  \ref{Lem:2}, that $c_{\alpha\beta}=0$    with $\alpha< \beta$ implies $c_{\alpha s} c_{\beta s}=0$. 


Let us summarise the properties of the $B\times B$ matrix  $(c_{\alpha\beta})$:
\begin{itemize}

\item [(a)] $c_{\alpha\beta}=0$ for $\alpha\ge \beta$, i.e., the matrix is upper triangular with zeros on the diagonal. 

\item[(b)]  If $c_{\alpha\beta}=0$, then for every $s\in \{1,...,B\}$ we have $c_{s\beta}c_{\alpha s}=0$. 

\item[(c)] If $c_{\alpha\beta}=0$  for certain $\alpha<\beta$, then for every $s\in \{1,...,B\}$ we have    $c_{\beta s}c_{\alpha s}=0$. 

\end{itemize}

Let us show that any such matrix can be constructed from a directed rooted in-forest by a procedure described in Section  4.1.
To see this, we introduce  the relation  $\prec$ on the set $\{1,...,B\}$: we define  $\alpha\prec \beta$ if and only if $ c_{\alpha \beta}=1$.
Clearly, $\alpha\prec \beta$ implies that the number  $\alpha$ is smaller than the number $\beta$. The relation $\prec$  is a strict partial order. Indeed, 
$ \alpha \not \prec \alpha$ because of (a), so the relation is irreflexive.  If $\alpha \prec \beta$, then $\alpha  < \beta $ by (a) implying   $\beta \not\prec \alpha $,
 so the relation is asymmetric.   If $\alpha\prec \beta$ and $\beta  \prec \gamma$ then by (b) we have  $\alpha \prec \gamma$, so the relation is transitive.

Moreover, for every $s \in \{1,...,B\}$ the set $S_s:= \{\alpha \mid \alpha  \prec s\}$ is a {\it chain}, i.e., is totally ordered. 
Indeed,  for  $\alpha <\beta $ $\in S_s$ we have   $c_{\alpha s}=c_{\beta s}=1$ implying $c_{\alpha \beta }=1$ in view of (c).

Next, it is easy to see that every  strict partially ordered    finite set  such that every $S_s$ is a chain can be described by a  directed rooted in-forest. 
The vertices of the forest are the numbers $1,...,B$, and two vertices $\alpha ,\gamma$ 
 are connected   by the oriented edge  $\vec{\gamma \alpha}$ if $\alpha \prec \gamma$ and if there is no $\beta $ such that $\alpha\prec \beta \prec \gamma$, each connected component of this  oriented graph is a  directed rooted in-tree.  
 The forest clearly  reconstructs the order `$\prec$' and therefore the matrix $(c_{\alpha\beta})$: 
for two numbers $\alpha \ne \beta\in \{1,...,B\}$ we have $\alpha\prec \beta$ if there exists an oriented way from $\beta$ to $\alpha$, see example on Fig. \ref{Fig:1}.

The converse is also true: every directed in-forest $\mathsf F$  (with appropriately labelled vertices) defines a matrix $c_{\alpha\beta}$ with properties (a), (b) and (c)  by \eqref{eq:condcij}.


\weg{
\begin{Example}  \label{Ex:differentc} {\rm
If $B=3$, the matrix $c_{\alpha \beta }$ is one of the following: 
$$ 
\begin{pmatrix} 
0 & 0 & 0 \\
0 & 0 & 0 \\
0 &  0 &0
\end{pmatrix}\ , \ \
\begin{pmatrix} 
0 & 1 & 0 \\
0 & 0 & 0 \\
0 &  0 &0
\end{pmatrix}
\ , \ \ \begin{pmatrix} 
0 & 0 & 1 \\
0 & 0 & 0 \\
0 &  0 &0
\end{pmatrix}\ , \ \
\begin{pmatrix} 
0 & 0 & 0 \\
0 & 0 & 1 \\
0 &  0 &0
\end{pmatrix}\ , \ \
\begin{pmatrix} 
0 & 1 & 1 \\
0 & 0 & 0 \\
0 &  0 &0
\end{pmatrix}\  , \ \ \begin{pmatrix} 
0 & 1 & 1 \\
0 & 0 & 1\\
0 &  0 &0
\end{pmatrix}.
$$
}
\weg{The first 4  cases correspond to the direct product situations. For example,  in case 3   the metric $g$ 
is the direct  product of the metrics $g_1 + \frac{1}{\det (L_1) }g_3$ and $g_2$ with  $g_1=Q_1( L_1) g_1^{\LC}$, 
 $g_2=Q_2( L_2) g_2^{\LC}$  and 
 $g_3=Q_3( L_3) g_3^{\LC}$. Notice that the metric  $g_1 + \frac{1}{\det ( L_1) }g_3$ depends on the coordinates $X_1, X_3$ only and the metric  $g_2$ depends on the coordinates $g_2$ only. }
 \end{Example} }

Let us now deal  with $E_{ij}$. Consider $\alpha\ne  \beta\in 1,...,B$. We say that the index $j$ belongs to  the $\alpha$-block (of coordinates), if  $ n_1+...+n_{\alpha-1}< j \le n_1+...+n_{\alpha}.
$ Consider  a pair $(j,i)$ such that $j$ belongs to the $\alpha$-block and $i$ to the $\beta$-block.   If  $c_{\alpha \beta}=0$, then  
	the component 
$E_{ji}$ is irrelevant for our formulas  since  when we build $g$ by \eqref{61} 
 the corresponding term $(C_{ji}x^j + C_{ij}x^i + E_{ji})  $ equals $( C_{ij}x^i + E_{ji})$ and 
    can be hidden in the factor $h_i$ of  $g_{ii}$.  Assume  now     $c_{\alpha\beta}=1$ and let us show that  then the corresponding   numbers $E_{ji}$ do  not depend on $j$ from $\alpha$-block and $i$ from $\beta$-block, i.e., all entries of the  whole $(\alpha,\beta)$-block of the matrix $(E_{ij})$ are equal to each other.

We will use  formula \eqref{eq:third}. If the $\beta$-block has more than one entry, take $k\ne i$ from the $\beta$-block.  Then,  \eqref{eq:third}
$$
\frac{1}{(x^j+ E_{ji})(x^k+E_{ki})}- \frac{1}{(x^j+E_{ji})(x^j-x^k)} + \frac{1}{(x^k+E_{ki})(x^j-x^k)} = 0, 
$$
implying $E_{ji}=E_{ki}$.  Thus, elements  of every  column  of the $(\alpha, \beta)$ block of the matrix  $E_{ij}$ are equal to each other. 

Similarly, by taking $k\ne j$ from the $\alpha$-block 
one proves that the   elements  of every  raw   of the $(\alpha, \beta)$ block of the matrix $E_{ij}$ are equal to each other. Thus, provided 
 $c_{\alpha\beta}=1$, all elements of the $(\alpha, \beta)$ block of $E_{ij}$ are equal to each other, we call them $-e_{\alpha\beta}$. 
The matrix $e=(e_{\alpha\beta})$ is a $B\times B$ constant matrix;   its element $e_{\alpha \beta}$ is  relevant  for us 
(in the sense that it is included in the formulas for the metrics) if the corresponding $c_{\alpha \beta}=1$.

Finally, let us show that $e_{\alpha \beta}= e_{\alpha\gamma}$ provided $c_{\alpha\beta}= c_{\alpha\gamma} = c_{\beta\gamma}=1$. We again use \eqref{eq:third} assuming that $j$ relates to 
the $\alpha$-block, $k$ to the $\beta$-block and $i$ to the $\gamma$-block, to get
$$
\frac{1}{(x^j- e_{\alpha\gamma})(x^k- e_{\beta\gamma})}-0 -\frac{1}{(x^k- e_{\beta\gamma})(x^j- e_{\alpha\beta})}=0
$$
implying $e_{\alpha \gamma}= e_{\beta\gamma}$.

The matrices $c_{\alpha \beta}$  and $e_{\alpha\beta}$ clearly determine the components of the matrices $C_{ij}$ and  
components of the matrix $E_{ij}$ which are relevant  for \eqref{61}. 
Plugging these $C_{ij}$  and $E_{ij}$ into  \eqref{61} and taking in account that
$\prod_{s=1}^{n^\alpha} (e_{\alpha \beta} -x_\alpha^s)=\det(e_{\alpha \beta}\Id_{n_\alpha} -L_\alpha )$,  we obtain     the  form similar to that in  
Theorem \ref{thm:frobenius2}. The difference is as follows:  we  did not prove yet that  the diagonal factors  $H_j(x^j):= \tfrac{1}{h_j(x^j)}$  are polynomials
 $P_\alpha(x^j)$ of degrees $\le n_\alpha+1$ and  we did not obtained the additional conditions on these polynomials  

Let us do  these:  first we notice that the metric $g$ has the iterated warped product structure: 
$$
g= g_1 + \sigma_1(X_1) g_2 + \sigma_2(X_1,X_2)g_3 +...+ \sigma_{B-1}(X_1,...,X_{B-1}) g_B,
$$
where the metric $g_\alpha$ is as follows: take   the metric   $g_\alpha^{\LC}$     given  by   \eqref{eq:tildeg} and multiply its $j$-th 
 diagonal element by $h_j(x^j)$ for every $j$ related to the $\alpha$-block.  The functions $\sigma_\beta$ are as follows: 
 $$
 \sigma_\beta =\left( \prod_{\alpha <\beta}   \det\left(e_{ \alpha \beta}\Id_{n_\beta} - L_\beta\right)^{c_{ \alpha \beta}}\right)^{-1}. 
 $$  
Since $g$ is flat, $g_1$ must be flat and $g_2,...,g_{B}$ of constant curvature.  Applying the result of \cite[\S 5]{Solodovnikov} (see Fact \ref{rem:solodovnikov} above) shows  that blocks $g_\alpha$ of dimension greater than one  are given by  $P_\alpha(L_\alpha)g^{\LC}_\alpha $, where $P_\alpha$ is a polynomial of degree $\le n_\alpha+1$ (in Section \ref{sec:con} we will show that coefficients of these polynomials satisfy the conditions (i--iii) from Section \ref{subsect:4.1} and also consider 1-dimensional blocks).

\subsection{ Conditions on the coefficients of   $P_\alpha$} \label{sec:con} 

To complete the proof of Theorem \ref{thm:frobenius2}, it remains to explain that  in the case of one dimensional blocks the corresponding 
 function  $H_j= 1/h_j$ is a polynomial of degree at most $2$ and that   conditions  (i)-(iii) on the coefficients of $P_\alpha$ 
stated before Theorem \ref{thm:frobenius2a} are fulfilled.  
We will need some facts and preliminary work. 

Recall that a \emph{Casimir}  of a Poisson structure  
is defined by the property that Poisson structure applied to it gives  zero. For a Poisson structure $\mathcal A_g$ given by \eqref{eq:intro:6} 
and corresponding to a (flat) metric $g$ of dimension $\ge 2$, a Casimir (of the lowest order) can be understood as 
 a function $f$ satisfying $\nabla_i \nabla_j f=0$. Of course, any constant is a Casimir and $n$ functionally independent Casimirs give us  flat coordinates for $g$ in which the components $g^{ij}$ are all constants. For  a metric $g$ of constant curvature $K$ we define \emph{Casimir}   as a function 
$\widehat f$ satisfying the equation\footnote{The functions  satisfying this equation are  indeed  Casimirs  of the (nonlocal) Poisson structure corresponding to the constant curvature metric $g$, see e.g. \cite[\S 2]{NijenhuisAppl2} and references therein.} $\nabla_i \nabla_j \widehat f + K \widehat f g_{ij} =0$.   The space of Casimirs of a constant curvature metric  (on a simply-connected manifold of dimension $\ge 2$) is a vector space of dimension $n+1$.

Dimension 1 is special. In this case, we will define a Casimir as a function satisfying $\nabla_i \nabla_j \widehat f + K \widehat f g_{ij} =0$ 
for some constant $K$, so that for each $K$, the Casimirs form a two-dimensional space.

\begin{Fact}   \label{Fact:cas} 
Consider the $n\ge 2$-dimensional metric $g=  P(L)  g_{\LC}$, where $L$ and $g_{\LC}$ are as in \eqref{eq:tildeg}, and $P$ is a polynomial 
$
P(t) = a_0+ a_1 t + ... + a_{n+1} t^{n+1}
$.  Then, the metric has constant curvature $-\tfrac{1}{4} a_{n+1}$.  Moreover, in any dimension $n\ge 1$,   
 the function $\sqrt{\det(\lambda \Id - L)}$ is a Casimir of $g$ if and only if $P(\lambda)=0$ and in this case we  have: 
\begin{equation}
\label{eq:length} 
4 g\left(\ddd\sqrt{ (\lambda\Id - L)} , \ddd\sqrt{\det  (\lambda \Id - L)}\right)=  - \frac{ \ddd P(t)}{\ddd t}_{\mid t=\lambda}  +   a_{n+1} \det  (\lambda \Id - L). 
\end{equation}
\end{Fact}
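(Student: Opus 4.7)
The constant-curvature assertion is an immediate corollary of Fact~\ref{rem:solodovnikov}: in the diagonal coordinates of \eqref{eq:tildeg}, $g=P(L)g_{\LC}$ has components $g_{ii}=\prod_{j\ne i}(x^i-x^j)/P(x^i)$, i.e.\ is precisely of Solodovnikov form with $H_i(t)=P(t)$ of degree $\le n+1$, so $g$ has constant curvature. The specific value $-\tfrac14 a_{n+1}$ I would extract by evaluating a single sectional curvature directly in the diagonal form (as in \cite[\S 7]{NijenhuisAppl2}); only the leading coefficient $a_{n+1}$ survives in the answer.

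For the Casimir characterisation, set $f:=\sqrt{\det(\lambda\Id - L)}=\prod_k(\lambda-x^k)^{1/2}$. A direct computation yields
\begin{equation*}
\partial_i f = -\frac{f}{2(\lambda - x^i)},\qquad \partial_j\partial_i f = \frac{f}{4(\lambda-x^i)(\lambda-x^j)}\ (i\ne j),\qquad \partial_i^2 f = -\frac{f}{4(\lambda-x^i)^2}.
\end{equation*}
The Christoffel symbols of a diagonal metric are standard (see Section~\ref{subsect:6.2}); in particular $\Gamma^i_{ij} = 1/(2(x^j-x^i))$ for $i\ne j$. A one-line algebraic cancellation then shows $\nabla_j\partial_i f=0$ for all $i\ne j$ \emph{identically}, independent of $P$. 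Writing out $\Gamma^i_{ii}$ and $\Gamma^k_{ii}$ and clearing denominators, the diagonal condition $\nabla_i^2 f + K f g_{ii}=0$ with $K=-\tfrac14 a_{n+1}$ reduces to a polynomial identity in the $x^j$'s of Lagrange-interpolation type; tracking the residues shows that this identity holds precisely when $P(\lambda)=0$.

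Formula \eqref{eq:length} then follows from
\begin{equation*}
4 g(df,df) \;=\; \sum_i g^{ii}\cdot\frac{f^2}{(\lambda-x^i)^2} \;=\; \sum_i \ell_i(\lambda)\,\frac{P(x^i)}{\lambda - x^i},
\end{equation*}
where $\ell_i(t)=\prod_{k\ne i}(t-x^k)/\prod_{j\ne i}(x^i-x^j)$ is the Lagrange basis for the nodes $x^1,\dots,x^n$. When $P(\lambda)=0$, we factor $P(t)=(t-\lambda)Q(t)$ with $\deg Q=n$ and leading coefficient $a_{n+1}$, so that $P(x^i)/(\lambda-x^i)=-Q(x^i)$; the Lagrange remainder identity $\sum_i\ell_i(\lambda)Q(x^i)=Q(\lambda)-a_{n+1}\prod_i(\lambda-x^i)$ combined with $Q(\lambda)=P'(\lambda)$ produces exactly \eqref{eq:length}. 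The main obstacle will be the diagonal Hessian computation: one must verify that the correction $a_{n+1}\det(\lambda\Id-L)$ and the obstruction proportional to $P(\lambda)$ arise cleanly from the Lagrange-interpolation sum without leaving uncontrolled remainders. The one-dimensional case $n=1$ is separate but immediate, since constant curvature is vacuous there and the equation $\nabla_i\nabla_j f+Kfg_{ij}=0$ with $f=\sqrt{\lambda-x^1}$ reduces to a direct verification.
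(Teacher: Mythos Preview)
Your proposal is correct and follows exactly the route the paper indicates: the paper's own ``proof'' consists of the single remark that in the explicit diagonal coordinates (with w.l.o.g.\ $\lambda=0$) this is an exercise in Vandermonde identities, left to the reader. Your outline carries out that exercise --- the off-diagonal Hessian cancellation, the Lagrange-interpolation identity $\sum_i\ell_i(\lambda)Q(x^i)=Q(\lambda)-a_{n+1}\prod_i(\lambda-x^i)$ for $\deg Q\le n$, and the factorisation $P(t)=(t-\lambda)Q(t)$ --- exactly as intended, so there is nothing to compare.
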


\begin{proof}
The metric $g_{\LC}$ and operator $L$ are explicitly  given (and w.l.o.g. one may assume $\lambda=0$) so the proof is an exercise in the Vandermonde identities and is left to the reader.  
\end{proof}

Below, we work with warped product metrics for which the `covariant language' is more convenient. For this reason, starting from Fact \ref{fact:6,5}  and till the end of the current Section \ref{sec:con},    $g$ and $g_i$ will denote covariant metrics. For the corresponding contravariant metrics  we use $g^*$ and $g_i^*$. 
                                   
\begin{Fact}\label{fact:6,5}
Suppose a warped product metric  $g=g_1 + f(X_1)^2  g_2$  has constant curvature. Then $g_1$   and $g_2$ have constant curvatures. Moreover,  the following statements hold:
\begin{enumerate}
\item     If $g$  is flat, then $g_1$ is flat.    
 \item  If $g$ is flat and $g_2$ is $n_2\ge2$-dimensional, then $K_2= g^*_1(\ddd f, \ddd f)$,   where  $K_2$ is the curvature  of $g_2$. 
 \item $f(X_1)$ is $g_1$-Casimir. \end{enumerate}
\end{Fact}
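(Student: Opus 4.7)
The plan is to reduce everything to the classical sectional-curvature formulas for a warped product $g=g_1+f(X_1)^2\,g_2$ (see e.g. O'Neill, \emph{Semi-Riemannian Geometry}, Chapter 7, or Besse, \emph{Einstein Manifolds}, 9.J). At each point of $M_1\times M_2$, a 2-plane $\sigma$ falls into one of three types, with the following $g$-sectional curvatures: (a) if $\sigma\subset TM_1$, then $K_g(\sigma)=K_{g_1}(\sigma)$; (b) if $\sigma=\mathrm{span}(X,V)$ is a mixed plane with $X\in TM_1$ a $g_1$-unit vector and $V\in TM_2$ a $g$-unit vector, then $K_g(\sigma)=-f^{-1}\,\nabla_i\nabla_j f\,X^iX^j$, where the Hessian is with respect to $g_1$; (c) if $\sigma\subset TM_2$, then $K_g(\sigma)=f^{-2}\bigl(K_{g_2}(\sigma)-g^*_1(\ddd f,\ddd f)\bigr)$.

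I would then impose $K_g\equiv K$ (the constant sectional curvature of $g$) in (a)--(c) and harvest the three conclusions in turn. Formula (a) immediately gives that $g_1$ has constant curvature $K$; setting $K=0$ yields (1). In (b), since $X$ may be any $g_1$-unit vector, the scalar identity is equivalent to the tensor equation
\[
\nabla_i\nabla_j f + K f\,(g_1)_{ij}=0,
\]
which is precisely the Casimir condition for $g_1$ recalled just before Fact \ref{Fact:cas}; this is (3). In (c), rearranging and observing that the left-hand side depends only on the $M_2$-point while the right-hand side depends only on $X_1\in M_1$ forces both to equal a single constant; hence $g_2$ has constant curvature $Kf^2+g^*_1(\ddd f,\ddd f)$, and specializing to the flat case $K=0$ gives $K_2=g^*_1(\ddd f,\ddd f)$, which is (2).

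I expect no real obstacle: the proof amounts to three readings of the same family of identities. The only mild care concerns low-dimensional degenerate cases. Sectional curvature is undefined in dimension one, but the PDE in (3) is well-posed regardless of $\dim M_1$ and matches the paper's one-dimensional definition of Casimir (where the constant $K$ is free); and (2) is stated under the explicit hypothesis $n_2\geq 2$ precisely so that $K_2$ is defined. Before writing out the proof in full I would double-check that the sign conventions in (a)--(c) agree with the ones used implicitly elsewhere in the paper for the Riemann tensor and the covariant Hessian.
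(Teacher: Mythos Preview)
Your approach is correct and rests on the same warped-product curvature identities as the paper: the formulas (4.2) from Prvanovi\'c \cite{Prvanovic} that the paper cites are the component form of the O'Neill/Besse sectional-curvature formulas you invoke. The one substantive difference is item (3) in the non-flat case. The paper reduces it to the flat case via the cone construction $\widehat g=(\ddd x^0)^2+(x^0)^2\,g$, which is flat when $g$ has constant curvature $1$, and then applies the already-established flat subcase to the new warped product $\widehat g_1+(x^0 f)^2\,g_2$. You instead read off the Casimir equation $\nabla\nabla f+Kf\,g_1=0$ directly from the mixed-plane formula (b). Your route is shorter and treats all curvatures uniformly; the paper's conification is a detour, but it illustrates a reduction technique worth knowing. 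One small caution: since the paper works in arbitrary signature, your polarization step in (b) should be phrased for non-null vectors (which still span), and the ``both sides constant'' argument in (c) implicitly uses that $Kf^2+g_1^*(\ddd f,\ddd f)$ is indeed constant, which follows at once from the Casimir equation you have just derived.
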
 

\begin{proof}
The first  statement is  well-known and immediately follows from geometric arguments. The second statement  follows  from 
the  second formula in the first line of   \cite[(4.2)]{Prvanovic}. The third  statement follows, under the additional assumption  that the curvature is zero, 
 from the second line   of \cite[(4.2)]{Prvanovic}.
If the curvature is not zero, we may assume that it is equal to $1$.
 Then, we employ the conification construction: we consider the $(n+1)$-dimensional metric $\widehat g= (dx^0)^2+ (x^0)^2\bigl( g_1 + f(X_1)^2  g_2\bigr)$.    The metric $\widehat g$ is flat  and can be viewed as a warped product metric with base  $(dx^0)^2+ (x^0)^2 g_1$ and warping function $\widehat f^2:=  \left(x^0 f(X_1)\right)^2$. Then,  the function $\widehat f:= x^0 f(X_1)$ is a Casimir of $\widehat g_1:= (dx^0)^2+ (x^0)^2  g_1$ implying that $f(X_1)$ is a Casimir of $g_1$. 
\end{proof}

Next, we need the following technical lemma: 
\begin{Lemma}\label{lem:Casimir1} 
Suppose the warped product metric  $g_1 + f(X_1)^2  g_2$   is flat and $f(X_1)$ is $g_1$-Casimir. Then, the following holds:
\begin{enumerate} 

\item  Every $g_1$-Casimir  $F(X_1)$ such that $g_1^*(\ddd f,\ddd F)=0$ is a Casimir of $g$.

\item  If $g_2$ has constant nonzero curvature $K_2$ or is one-dimensional, then for any function $\phi(X_2)$ satisfying $\nabla^{g_2} \nabla^{g_2}\phi + K_2 \phi g_2=0 $ the function $f \phi$ is a $g$-Casimir. 
\item If $g_2$ is flat, then for any function  $\phi(X_2)$ satisfying $\nabla^{g_2} \nabla^{g_2} \phi(X_2)= \const g_2 $ 
and for any $g_1$-Casimir $\tilde f$ such that $g_1^*(\ddd f, \ddd \tilde f)=1$ we have 
that 
$f \phi - \const  \tilde f $ is a $g$-Casimir. 

\end{enumerate} 
\end{Lemma}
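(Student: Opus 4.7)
The plan is to verify all three claims by a direct computation of the Hessian $\nabla^g\nabla^g F$ in the natural block decomposition induced by the warped-product structure. Recall the standard formulas for the Levi-Civita connection of $g = g_1 + f^2 g_2$: writing indices $a,b,c$ for base coordinates on $X_1$ and $i,j,k$ for fibre coordinates on $X_2$, the only non-vanishing Christoffel symbols are
$$
\Gamma^a_{bc} = (\Gamma_1)^a_{bc}, \quad \Gamma^i_{jk} = (\Gamma_2)^i_{jk}, \quad \Gamma^i_{aj} = f^{-1}(\partial_a f)\,\delta^i_j, \quad \Gamma^a_{ij} = -f f^{,a}(g_2)_{ij},
$$
where $f^{,a} = g_1^{ab}\partial_b f$. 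A function $F$ is a $g$-Casimir precisely when the three blocks $(a,b)$, $(a,i)$, $(i,j)$ of $\nabla^g\nabla^g F$ all vanish.

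For part 1, with $F = F(X_1)$ the $(a,b)$-block reduces to $\nabla^{g_1}\nabla^{g_1}F$, which vanishes by assumption. The $(a,i)$-block is zero since all fibre derivatives of $F$ vanish. The $(i,j)$-block collapses to $f f^{,a}(g_2)_{ij}\,\partial_a F = f\,g_1^*(\ddd f,\ddd F)(g_2)_{ij}$, which is zero by the orthogonality hypothesis.

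For part 2 take $F = f\phi$ with $\phi = \phi(X_2)$. The $(a,b)$-block is $\phi\,\nabla^{g_1}\nabla^{g_1}f = 0$ since $f$ is a $g_1$-Casimir. The $(a,i)$-block is $\partial_a f\cdot\partial_i\phi - f^{-1}(\partial_a f)\cdot f\partial_i\phi = 0$ by direct cancellation. The $(i,j)$-block equals
$$
f\,\nabla^{g_2}_i\nabla^{g_2}_j\phi + f\,\phi\,g_1^*(\ddd f,\ddd f)(g_2)_{ij}.
$$
When $n_2\ge 2$, Fact \ref{fact:6,5}(2) gives $g_1^*(\ddd f,\ddd f) = K_2$, so the block becomes $f\bigl(\nabla^{g_2}\nabla^{g_2}\phi + K_2\phi g_2\bigr)_{ij} = 0$; when $n_2 = 1$, flatness of $g$ forces $g_1^*(\ddd f,\ddd f)$ to be constant and we take that constant to be $K_2$, so the same cancellation applies.

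For part 3, with $g_2$ flat Fact \ref{fact:6,5}(2) forces $g_1^*(\ddd f,\ddd f) = 0$, hence the computation above shows $\nabla^g\nabla^g(f\phi)$ has only the $(i,j)$-block non-zero, equal to $f\,\nabla^{g_2}_i\nabla^{g_2}_j\phi = c f(g_2)_{ij}$. Applying part 1 to $\tilde f$ (with $g_1^*(\ddd f,\ddd\tilde f) = 1$ replacing the orthogonality condition) yields that $\nabla^g\nabla^g\tilde f$ has only its $(i,j)$-block non-zero, equal to $f\,g_1^*(\ddd f,\ddd\tilde f)(g_2)_{ij} = f(g_2)_{ij}$. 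Subtracting $c\tilde f$ therefore cancels the surviving block and proves $f\phi - c\tilde f$ is a $g$-Casimir. The only delicate point is sign-bookkeeping in the warped-product Christoffel symbols; once fixed, all three parts follow by the same three-block computation, so there is no substantive obstacle.
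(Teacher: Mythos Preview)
Your proof is correct and follows essentially the same approach as the paper: both compute the warped-product Christoffel symbols explicitly and then verify the vanishing of the three blocks of the $g$-Hessian directly. The organisation differs only cosmetically (the paper first records the Hessians of $F(X_1)$ and $\phi(X_2)$ separately before combining them for $f\phi$, whereas you compute $\nabla^g\nabla^g(f\phi)$ in one pass), and your handling of the one-dimensional fibre in part 2 matches the convention the paper adopts just before the lemma.
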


Notice that the $g$-Casimirs described in Lemma \ref{lem:Casimir1}  span a $n+1$-dimensional vector space and, therefore, form a basis of 
the space of $g$-Casimirs. 

\begin{proof} 
By direct calculations  (done many times in the literature, see e.g. \cite[(4.1)]{Prvanovic})
  one sees that  the Christoffel symbols of the warped product metric $g$ are  given by the following formulas: 
	$$
	\Gamma^a_{bc}=\overset{1}{\Gamma}{}^a_{bc} \ ,  \ \Gamma^\alpha_{\beta \gamma}= \overset{2}{\Gamma}{}^\alpha_{\beta \gamma} \ ,
	\Gamma^a_{\beta \gamma} = - f 
	\sum_s f_{,s} \overset{1}{g}{}^{sa} \overset{2}{g}_{\beta \gamma} \ , \Gamma^\alpha_{a\beta}= \frac{1}{f} f_{,a} \delta^\alpha_\beta \  , \Gamma^a_{\alpha b}= \Gamma^a_{b\alpha } =\Gamma^\alpha_{ab}=0.      
	$$
Here    $\overset{1}{\Gamma}$, $\overset{2}{\Gamma}$  relate to the Christoffel symbols of the metrics $g_1= \overset{1}{g}$ and  $g_2= \overset{2}{g}$ respectively;  $a,b,c,s $ run from $1$ to $n_1$ and $\alpha,\beta,\gamma$ from $n_1+1 $
	to $n$.    The notation $f_{,s}$  means $\tfrac{\partial f }{\partial  x^s}$.

	Therefore, for any function $F(X_1)$ we have:
	\begin{equation}\label{eq:con1} 
	\nabla_a \nabla_b F = \nabla^{g_1}_a \nabla^{g_2}_b F  \ , \quad \nabla_\alpha \nabla_a F=0 \ , \quad 	\nabla_\alpha \nabla_\beta  F=  f g_1^*(dF, df) \overset{2}{g}_{\alpha \beta}.
	\end{equation}
	In particular, if $F$ is a $g_1$-Casimir, then $\nabla^{g_1}_i \nabla^{g_1}_j F= f g_1^*(dF, df) \overset{2}{g}_{ij}$ which implies
	the first statement.  Also, if $g_2$ is flat, then 
	$\textrm{grad}_{g_1}       f$ is light-like (Fact \ref{fact:6,5}, item 2), so  $f$ is   a $g$-Casimir.

	Next,  for any function  $\phi(X_2)$  we have 
	$$
	\nabla_a\nabla_b \phi=0 \ , \quad \nabla_a \nabla_\beta  \phi= -\frac{1}{f} f_{,a} \phi_{,\beta} \ , \quad \nabla_\alpha \nabla_\beta  \phi= \nabla^{g_2}_\alpha \nabla^{g_2}_\beta  \phi.$$

	In particular, if $\phi$  satisfies  $\nabla^{g_2}_{\alpha}\nabla^{g_2}_\beta \phi = \const \overset{2}{g}_{\alpha\beta}$ and $g_2$ is flat, then
	$$
	\nabla_{a} \nabla_b (\phi f)= 0 \ ,  \quad 
	\nabla_{\alpha } \nabla_b  (\phi f)=  0 , \quad     \nabla_\alpha \nabla_\beta (\phi f) = \const \overset{2}{g}_{\alpha\beta} f.
	$$	
	Combining this with \eqref{eq:con1},  we see that   $\phi f - \const \tilde f$ is a Casimir. 
If  $\phi$ satisfies $\nabla^{g_2} \nabla^{g_2}\phi + K_2 \, \phi \, g_2=0 $, 
then $\nabla_\alpha \nabla_\beta (\phi f) = -K_2\, \phi \,  \overset{2}{g}_{\alpha\beta}\, f -\phi \, f \, g_1^*(\ddd f, \ddd f)\, \overset{2}{g}_{\alpha \beta}=0$.  
	\end{proof}

Now, we are able to describe conditions on the polynomials $P_\alpha$  that are necessary for the flatness of the metric $g$  from Theorem \ref{thm:frobenius2} given by  \eqref{eq:thm4}, and also to finish the case of one-dimensional blocks: We need to show that if  an $\alpha$-block is one-dimensional, 
then  the corresponding function $H_j(x^j)= \tfrac{1}{h_j(x^j)}$   from \eqref{eq:solodovnikov}  is a polynomial of degree $\le 2$. 
 
First we consider the most important case, when  $c_{\alpha \beta}=1$ for all  $1\le\alpha<\beta\le B$. The corresponding graph in this case is just a `path' $1\ {\longleftarrow} \ 2 \ {\longleftarrow}\cdots {\longleftarrow} \ B$ from leaf $B$ to root $1$, so that  in view of \eqref{61} 
the metric $g_{ij}$ is  given by the warped product of the form 
\begin{equation} \label{eq:ttt}
g = g_1 + f_1(X_1)^2  g_2 +f_1(X_1)^2f_2(X_2)^2g_3+...+ \left(\prod_{s=1}^{B-1} f_s(X_s)^2 \right) g_B
\end{equation}
with $f_\alpha(X_\alpha)^2= \det ( \lambda_\alpha \Id_{n_\alpha}- L_\alpha)$ for   $\lambda_\alpha= e_{\alpha\ \alpha+1}= e_{\alpha\ \alpha +2}=...=e_{\alpha\ B}$.  The metrics $g_\alpha$ are $n_\alpha$-dimensional  metrics of the form  \eqref{eq:solodovnikov}.

Suppose  that a $g_\alpha$-component is one-dimensional  and denote the corresponding coordinate by $x^j$. 
 By direct calculations,  using
 the formulas for $S^{ij}_k$    and $\widehat \Gamma^i_{jk}     $
  from Section \ref{subsect:6.2} and assuming $u_j=0$,  we see that 
the condition $\widehat \nabla_j S^{jj}_j=0$ becomes 
\begin{equation}\label{eq:1dim}
\frac{\partial^2 h_j(x^j)}{\partial {x^j}^2}  +  
8 \, g^*\left(\ddd \prod_{s=1}^{\alpha-1} f_s(X_s), \ddd \prod_{s=1}^{\alpha-1} f_s(X_s)\right)=0. 
\end{equation}
Note that the function $\prod_{s=1}^{\alpha-1} f_s(X_s)$ is a Casimir of the  metric 
$g_1 + f_1(X_1)^2  g_2 +...+ \left(\prod_{s=1}^{\alpha-1} f_s(X_s)^2 \right) g_\alpha$ by Fact \ref{fact:6,5} so 
$g^*\left(\ddd\prod_{s=1}^{\alpha-1} f_s(X_s), \ddd\prod_{s=1}^{\alpha-1} f_s(X_s)\right)$ is a constant and $h_j$ is a polynomial of degree $\le 2$ whose leading coefficient is this constant with a minus sign. 
Note that the  case $\alpha=1$ is also covered by the argumentation above by assuming that this constant is zero; in this case $h_j$ is a linear polynomial.

Thus, our metric $g$ is given by \eqref{eq:ttt} 
with each $g_\alpha = g_\alpha^{\LC}\bigl(P_\alpha( L_\alpha)\bigr)^{-1}$ and  
$f_\alpha^2=  {\det  (\lambda_\alpha\Id_{n_\alpha}- L_\alpha ) }$,  where 
 $P_\alpha$ has degree  at most $n_\alpha+1$. We denote  the coefficients of the polynomials 
by 
$$
P_\alpha(t)=  \overset{\alpha}{a}_0 + \overset{\alpha}{a}_1t +...  +\overset{\alpha}{a}_{n_\alpha+1}t^{n_1+1}.
$$

\begin{Lemma} 
\label{lem:flat}
In the above notation, we have  the following relations:
\begin{equation} \label{eq:conditions} \begin{array}{lr} \overset{1}{a}_{n_1+1}=0    \, &  \\ 
P_\alpha(\lambda_\alpha)=0 & \textrm{\ for $\alpha=1,...,B-1$.}  \\
 \frac{\ddd P_\alpha     }{\ddd t}_{|t=\lambda_\alpha}    =  \overset{\alpha+1}{a}_{n_{\alpha+1}+1}    & \textrm{\ for $\alpha=1,...,B-1$ } 
\end{array}\end{equation}
\end{Lemma}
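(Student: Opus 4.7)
My plan is to induct on $B$. The base case $B=1$ is immediate from Fact \ref{Fact:cas}: since $g = g_1$ has contravariant form $P_1(L_1) g_1^{\LC}$, which has constant curvature $-\tfrac{1}{4}\overset{1}{a}_{n_1+1}$, flatness forces $\overset{1}{a}_{n_1+1}=0$.

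For the inductive step $B \ge 2$, I peel off the outermost layer of the warped product and write $g = \widetilde g + F^2 g_B$ with base $\widetilde g = \sum_{\alpha=1}^{B-1} \prod_{s<\alpha} f_s^2 \, g_\alpha$ and warping function $F = \prod_{s=1}^{B-1} f_s$. Applying Fact \ref{fact:6,5} to this single warped product yields four ingredients: (a) $\widetilde g$ is flat; (b) $g_B$ has constant curvature $K_B$ (when $n_B\ge 2$); (c) $K_B = \widetilde g^*(dF,dF)$; and (d) $F$ is a $\widetilde g$-Casimir. From (a) together with the inductive hypothesis applied to $\widetilde g$ (an iterated warped product of the same shape with one fewer block) I obtain all conditions in \eqref{eq:conditions} involving $P_1,\dots,P_{B-1}$ and $\lambda_1,\dots,\lambda_{B-2}$, so only the two relations $P_{B-1}(\lambda_{B-1})=0$ and $P_{B-1}'(\lambda_{B-1})=\overset{B}{a}_{n_B+1}$ remain.

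To establish $P_{B-1}(\lambda_{B-1})=0$ I decompose $\widetilde g = G + \widetilde F^2 g_{B-1}$ one level further, with $\widetilde F = \prod_{s=1}^{B-2} f_s$, so that $F = \widetilde F \cdot f_{B-1}$. A direct Hessian computation in the warped-product Christoffel symbols (the same one carried out inside the proof of Lemma \ref{lem:Casimir1}) reduces the Casimir equation $\nabla^{\widetilde g}\nabla^{\widetilde g} F = 0$, given that $\widetilde F$ is already a $G$-Casimir by induction, to the single nontrivial requirement $\nabla^{g_{B-1}}\nabla^{g_{B-1}} f_{B-1} + K_{B-1}\, f_{B-1}\, g_{B-1} = 0$, where $K_{B-1} = G^*(d\widetilde F, d\widetilde F)$ is the constant curvature of $g_{B-1}$ supplied by ingredient (c) applied at the previous stage. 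By Fact \ref{Fact:cas}, this equation for $f_{B-1} = \sqrt{\det(\lambda_{B-1}\Id - L_{B-1})}$ holds precisely when $P_{B-1}(\lambda_{B-1})=0$.

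Finally I derive the derivative relation by expanding $\widetilde g^*(dF, dF)$ in the warped-product structure: $\widetilde g^*(dF, dF) = f_{B-1}^2\, G^*(d\widetilde F, d\widetilde F) + g_{B-1}^*(df_{B-1}, df_{B-1})$. The first summand equals $K_{B-1}\, f_{B-1}^2 = -\tfrac{1}{4}\overset{B-1}{a}_{n_{B-1}+1}\, f_{B-1}^2$, and by Fact \ref{Fact:cas} (using $P_{B-1}(\lambda_{B-1})=0$) the second summand equals $\tfrac{1}{4}\bigl(-P_{B-1}'(\lambda_{B-1}) + \overset{B-1}{a}_{n_{B-1}+1}\, f_{B-1}^2\bigr)$. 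The $f_{B-1}^2$ contributions cancel, leaving $\widetilde g^*(dF, dF) = -\tfrac{1}{4} P_{B-1}'(\lambda_{B-1})$, which compared with $K_B = -\tfrac{1}{4} \overset{B}{a}_{n_B+1}$ yields $P_{B-1}'(\lambda_{B-1}) = \overset{B}{a}_{n_B+1}$. The main obstacle I anticipate is the one-dimensional block cases $n_B=1$ or $n_{B-1}=1$, where Fact \ref{fact:6,5}(2) and the curvature formula in Fact \ref{Fact:cas} do not directly apply; these I would treat separately using the ODE \eqref{eq:1dim} already derived in the text, whose explicit second-derivative form reproduces exactly the same two constraints on the leading coefficient of the corresponding polynomial.
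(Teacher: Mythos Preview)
Your proposal is correct and follows essentially the same approach as the paper. The paper organizes the argument as a build-up from the inside (first analyzing $g_1+f_1^2 g_2$, then $g_1+f_1^2 g_2+f_1^2 f_2^2 g_3$, and so on) rather than as a formal induction peeling off the outermost block, but the key ingredients---Fact~\ref{fact:6,5} to extract flatness of the base and the Casimir property of the warping function, Lemma~\ref{lem:Casimir1} to reduce the product Casimir $\prod f_s$ to a Casimir condition on the individual $f_\alpha$, Fact~\ref{Fact:cas} to translate this into $P_\alpha(\lambda_\alpha)=0$, and the telescoping computation of $g^*(dF,dF)$ via \eqref{eq:length} yielding the derivative relation---are identical, as is the separate treatment of one-dimensional blocks via \eqref{eq:1dim}.
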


 Note  that in view of Theorem  \ref{thm:frobenius2a},  conditions \eqref{eq:conditions} are sufficient for the flatness of  $g$ and existence of Frobenius coordinates. 

\begin{proof} 
We view $g$ as a warped product metric over the $n_1+ n_2$ dimensional base equipped with the metric   $g_1 + f_1(X_1)^2  g_2 $. 
Then,   the metric $g_1 + f_1(X_1)^2   g_2 $ is flat. If $n_1>1$ (the case $n_1=1$ was already discussed above), 
combining Facts \ref{Fact:cas} and  \ref{fact:6,5}   we obtain $a_{n_1+1}=0$ implying the first line of \eqref{eq:conditions}.   Next, from Fact \ref{fact:6,5}  we know that $ f_1(X_1)$ is a $g_1$-Casimir implying  $P_1(\lambda_1)=0$  in view of Fact \ref{Fact:cas}. 

Let us now  show that
\begin{equation}
\label{eq:relation}  
\frac{\ddd P_1     }{\ddd t}_{|t=\lambda_1} =   \overset{2}{a}_{n_2+1}.
\end{equation}  
 Since   $g_1 + f_1(X_1)^2   g_2 $ is flat, 
 by  Fact \ref{fact:6,5}  we have  (we denote  by $K_2$ the curvature of $g_2$ and assume $n_2>1$) 
   \begin{equation} 
   \label{eq:combbist} 
   \tfrac{1}{4} g^*\bigl( \ddd f_1, \ddd f_1 \bigr) =  K_2.
   \end{equation}
Combining \eqref{eq:combbist}  with Fact \ref{Fact:cas}, we obtain \eqref{eq:relation}. In the case $n_2=1$,  \eqref{eq:relation} follows from   \eqref{eq:1dim}  and  Fact \ref{Fact:cas}.   
 
Next,  we  view $g$ as a warped product metric over the  $n_1+ n_2 + n_3$ dimensional  base equipped with the metric 
 $g_1 + f_1(X_1)^2  g_2 +f_1(X_1)^2  f_2(X_2)^2 g_3$. 
Then, the metric $g_1 + f_1(X_1) ^2 g_2 +f_1(X_1)^2 f_2(X_2)^2 g_3$ is flat. But it is itself a warped product metric over the $n_1+n_2$ dimensional base with  the metric  $g_1 + f_1(X_1) ^2 g_2$. 
By Fact  \ref{fact:6,5}, this  
implies  
that $ {f_1f_2}$ is a Casimir of this metric.  By  Lemma \ref{lem:Casimir1},  $f_2$ is a Casimir of $g_2$ so $P_2(\lambda_2)=0$. 
Moreover, by  Fact \ref{fact:6,5}  and using $g^*\bigl( \ddd f_1, \ddd f_2\bigr)=0$ we have  
   \begin{equation} 
   \label{eq:comb} 
   \tfrac{1}{4} g^*\bigl( \ddd (f_1f_2), \ddd (f_1 f_2)\bigr) =  K_3   \ \ (= \ -\tfrac{1}{4} \overset{3}{a}_{n_3+1} ).
   \end{equation}
On the other hand 
\begin{equation}
\label{eq:comb2}  
\begin{array}{ll}
 g^*\bigl( \ddd ({f_1f_2}), \ddd (f_1 f_2)\bigr) &= {f_2}^2  g_1^*( \ddd f_1, \ddd f_1) +     g_2^*( \ddd f_2, \ddd f_2)  \\  
 &  \stackrel{ \eqref{eq:length} }{ =  }   -\tfrac{1}{4}{f_2}^2 \frac{\ddd P_1     }{\ddd t}_{|t=\lambda_1}  -   
\tfrac{1}{4} \frac{\ddd P_2     }{\ddd t}_{|t=\lambda_2}  +    \overset{2}{a}_{n_2 +1} {f_2}^2 \\ & \stackrel{ \eqref{eq:relation} }{ =  } -\tfrac{1}{4} \frac{\ddd P_2     }{\ddd t}_{|t=\lambda_2}   .
\end{array}
\end{equation} 
Combining this with \eqref{eq:comb}      we obtain   $\overset{3}{a}_{n_3+1}=  \frac{\ddd P_2     }{\ddd t}_{|t=\lambda_2} $, as claimed. Iterating this procedure we   obtain     \eqref{eq:conditions}.   \end{proof} 

Lemma \ref{lem:flat} completes the proof of Theorem \ref{thm:frobenius2} under the additional assumption that for every $\alpha<\beta$ we have  $c_{\alpha\beta}=1$.  
  We now reduce the general case to this situation.

 We assume  without  loss of generality that the  combinatorial data are given by a directed rooted in-tree with $B$ vertices, i.e. the graph $\mathsf F$ is connected.  Otherwise, 
we have the direct product situation, i.e.,  the metric and all other relevant objects are direct products  of lower-dimensional metrics and relevant lower dimensional  objects.

We denote by $1,2,...,B$ the vertices of the in-tree $\mathsf F$ in such a way that $\alpha\prec \beta$ implies $\alpha <\beta$;
 of course, the vertex    $1$ is then the  root. Other vertices of degree one are called  \emph{leaves}. 
Recall that $\alpha= \operatorname{next}(\beta)$, if $\alpha\prec \beta$ and there is no $\gamma$ with $\alpha\prec  \gamma \prec \beta$. 

For every leaf $\beta$ we define the {\it chain} $S_\beta$ (oriented path to the root) as the sub-tree with vertices   $ \beta$, $\operatorname{next}(\beta)$, $   \operatorname{next}\left(\operatorname{next}(\beta)\right)$,...,$ 1$. For example, 
the  upper tree of Fig. \ref{Fig:1} has two chains, one with vertices $3,2,1$ and another with vertices $4,2,1$.
		
	Next, for the chain $S_\beta$ and for any fixed point $p$
	we consider the following submanifold $M_\beta$ passing through $p$: in the coordinates $(X_1,...,X_B)=(x^1,...,x^n)$ it is defined by the system of 
	equations 	$$X_\alpha=X_\alpha(p) \ \textrm{ for every $\alpha\not \in S_\beta$ }.$$    
This is a totally geodesic submanifold with respect to the connections $\Gamma$, $\bar \Gamma$ and  $\widehat \Gamma$. For $\widehat \Gamma$ this follows from formulas (A,B,C) of Section \ref{subsect:6.2}. For $\Gamma$ and $\bar \Gamma$ it follows from  \eqref{subg}.

 Therefore,  the restriction of $g$ and $\bar g$ onto $M_\beta$ satisfies the assumptions of Theorem \ref{thm:frobenius2}. Moreover, the components  $c_{\alpha \beta}$ corresponding to this restriction are equal  to $1$ for $\alpha<\beta$.

For example for  $\beta= 3$,  the metric $g$  corresponding  to  the  upper tree of Fig. \ref{Fig:1}   
 is given by 
$$
g= g_1 + \det ( e_{12} \Id_{n_1}-L_1) \cdot g_2 +  \det  (e_{12} \Id_{n_2}-L_1 ) \cdot ( ( e_{23} \Id_{n_3}- \det L_2)\cdot  g_3 + \det(e_{24} \Id_{n_4}-L_2  ) \cdot g_4)  
$$ 
and the restriction of the metric $g$ onto $M_3$ is   
$$
g_1 + \det (e_{12} \Id_{n_1}-L_1 )  \cdot g_2 +  \det (e_{12} \Id_{n_2}-L_1 )  ( e_{23} \Id_{n_3}-\det L_2 ) \cdot g_3.
$$

The case when   $c_{\alpha \beta}=1$ for all   $\alpha<\beta$ is described in Lemma \ref{lem:flat}  and it has been proved that the metric is as  in Theorem \ref{thm:frobenius2}. This implies that the metrics $g$ and $\bar g$ are constructed as in Section \ref{subsect:4.1} and the coefficients of $P_\alpha$ and 
$Q_\alpha$ satisfy conditions (i, ii) from Section   \ref{subsect:4.1}. It remains to show that they also satisfy condition (iii). In order to do this, suppose that $\alpha=  \textrm{next}(\beta)= \textrm{next}(\gamma)$ with $\beta\ne \gamma$. 
We consider the  sub-tree with vertices  $\alpha$, $ \beta$,  $\gamma:= \textrm{next}(\alpha)= \textrm{next}(\beta)$ 
and the corresponding  warped product  metric  with the base metric $g_\alpha$ and fibre 
  metric $g_\beta + g_\gamma$. For example,  for $\alpha=2$   in the case of  the  upper tree of Fig. \ref{Fig:1}, we consider 
		the  warped product metric $g_2 +   \det (\lambda_\alpha \Id_{n_2} - L_2) \cdot (g_3 + g_4)$. 
		
		We know that it must be of constant curvature which implies that the 
		direct product metric  $g_\beta + g_\gamma$ must  be of constant curvature which  in turn implies that it is flat. Then, by Fact  \ref{Fact:cas} 
		the   coefficients    
		$ \overset{\beta}{a}_{n_\beta+1}$ and   $\overset{\gamma}{a}_{n_\gamma+1}$  vanish implying   $\frac{\ddd P_{\alpha}}{\ddd t}_{| t=   \lambda_\alpha}=0$. 
Theorem  \ref{thm:frobenius2}  is proved.

\subsection{On  the uniqueness of Frobenius coordinates for a pair of metrics}\label{sect:8a}

We consider two flat metrics $g, \bar g$ possessing a common Frobenius coordinate system, and discuss the uniqueness of this coordinate system.  As before we assume that $R_g=\bar g g^{-1}$ has $n$ different eigenvalues. We know that $g, \bar g$ are as described in  Theorem \ref{thm:frobenius2}. In particular, in  the  corresponding coordinates, the connection  $\widehat \Gamma$ defining the Frobenius  coordinate system (i.e., the flat connection that vanishes in Frobenius coordinates) is 
 given by the formulas from Section  \ref{subsect:6.2}.  That is, 
\begin{itemize}

\item For every $i\ne j$ and $ k\ne i$,  $\widehat \Gamma^i_{jk}=0$. 

\item For the indices $i\ne j $ from one block, $\widehat \Gamma^i_{ij}=\frac{1}{x^j -x^i}$. 

\item For the index $i$ from the block number $\alpha$ and $j$ from the block number $\beta\ne \alpha$, we have  
 $\widehat \Gamma^j_{ji}=\widehat \Gamma^j_{ij}=\frac{c_{\alpha \beta}}{x^i}$.

\item For every $i$, the component  $\widehat\Gamma^i_{ii}:= u_i$ depends on $x^i$ only. 
\end{itemize}

However,  in view of \eqref{eq:sub}  and \eqref{eq:pdeab}, the function $u_i$ is also uniquely defined in terms of the metric $g$ (in fact, $u_i\equiv 0$ in the diagonal coordinates from Theorem \ref{thm:frobenius2}) unless  the only  component of $g$  which can  depend on $x^i$ is the component $g_{ii}$.  Such an exceptional situation is possible if and only if $g_{ii}$  represents a one-dimensional block which is a leaf in terms of the in-forest $\mathsf F$. 

Clearly,  the Frobenius coordinate system is determined up to affine change of coordinates by the flat connection $\widehat \Gamma$. Therefore, 
non-uniqueness of this coordinate  system only appears in the above exceptional situation.
It is easy to see that in this case the freedom in choosing it is the same as discussed in Section \ref{subsect:4.1} after Theorem \ref{thm:frobenius2}.

\begin{Remark} \label{last:step:proof} {\rm  Coming back to Theorem \ref{thm:frobenius1} (uniqueness part), we notice that   for the metrics $g=P(L) g_{\LC}$ and $\bar g=Q(L) g_{\LC}$,  every diagonal component  $g_{ii}$ depends on all variables $x^1,...,x^n$. Then, the connection  $\widehat \Gamma$ is unique implying 
 that Frobenius coordinates are unique  up to an affine coordinate change, as required. 
}\end{Remark}

\section{Pro-Frobenius coordinates and  multi-block Frobenius pencils. Proof of Theorem~\ref{thm:frobenius2a}}\label{sect:8}

\subsection{Extended AFF-pencils and pro-Frobenius coordinates} \label{subsect:8.1}

As seen from Section \ref{subsect:4.1},  the main ingredients in multi-block Frobenius pencils (Theorem \ref{thm:frobenius2})  are metrics of the form 
\begin{equation}
\label{eq:AFFextended}
P(L)g_{\LC}, \quad \mbox{where $P(\cdot)$ is a polynomial of degree $n+1$.}
\end{equation}   
If $\deg P\le n$,  then such metrics are flat  and form the AFF-pencil \eqref{eq:AFF_LC}.  However, if $\deg P = n+1$, i.e., $P(t) = a_{n+1}t^{n+1} + \dots$,  then $g=P(L)g_{\LC}$ has constant curvature  $K=-\frac{1}{4}a_{n+1}$ (see Fact \ref{fact:6,5}).  All together, the metrics \eqref{eq:AFFextended} form a pencil of compatible constant curvature metrics,  which can be thought of as one-dimensional extension of the AFF-pencil \eqref{eq:AFF_LC}, see details in \cite{Ferapontov-Pavlov}, \cite{NijenhuisAppl2}.  In Frobenuis coordinates $u^1, \dots, u^n$ from  Section \ref{subsect:AFF},   the coefficients $g^{\alpha\beta}$ of the metric $g=P(L)g_{\LC}$ with $\deg P = n+1$  are not affine functions anymore. In particular, in the notation from Section \ref{subsect:AFF}, for $P(t)=t^{n+1}$ we get:   
$$
g_{n+1} = L^{n+1} g_0 = 
\begin{pmatrix}
u^2 & u^3 & \dots & u^n & 0 \\
u^3 & \dots &u^n & 0 & 0 \\
\vdots & \iddots & \iddots & \vdots & \vdots \\
u^n & 0 & \dots & 0 & 0 \\
0& 0  & \dots & 0 & 0 \\
\end{pmatrix} +   \begin{pmatrix}
u^1 u^1  &   u^1 u^2 &  \dots &  u^1 u^n\\
u^2 u^1 & u^2u^2 &  \dots & u^2 u^n \\
u^3 u^1 & u^3u^2  &  \ddots &\vdots \\
\vdots & \vdots &  \ddots & \vdots \\
u^nu^1 & u^n u^2&  \dots & u^n u^n \\
\end{pmatrix}.
$$
All the other metrics from the extended AFF pencil \eqref{eq:AFFextended}, in coordinates $u^1,\dots, u^n$, take the form 
\begin{equation}
\label{eq:proFrobmetric}
g^{\alpha\beta} = b^{\alpha\beta} + a^{\alpha\beta}_s u^s - 4K u^\alpha u^\beta,
\end{equation}
which looks as a {\it quadratic perturbation} of \eqref{eq:Frobcoord}.  These metrics (more precisely,  the corresponding coordinates $u^1,\dots, u^n$) still possess remarkable properties, similar to those from Fact \ref{fact:4}.  We use one of them to introduce the following 

\begin{Definition}{\rm We say that $u^1,\dots, u^n$  is a {\it pro-Frobenius coordinate system} for a metric $g$ given by \eqref{eq:proFrobmetric}, if
the contravariant Christoffel symbols $\Gamma^{\alpha \beta}_s$ of $g$ are symmetric in upper indices. 
}\end{Definition}

The Frobenuis coordinates $u^1, \dots, u^n$ for the metrics from the AFF-pencil are
pro-Frobenius for all the metrics \eqref{eq:AFFextended} from the extended AFF-pencil  (recall that these coordinates are the coefficients of the characteristic polynomial of the Nijenhuis operator $L$).  

Notice that the symmetry of $\Gamma^{\alpha\beta}_s$  in upper indices is a strong geometric condition that naturally appears in many problems and admits several equivalent interpretations. The following statement summarises some of them  (the proof is straightforward and is left to the reader).   

\begin{Lemma}\label{lem:8.1}
Let $g_{ij}$ be a metric written in a certain coordinate system $u^1,\dots, u^n$.   The  following conditions are equivalent
\begin{itemize}
\item  The contravariant Christoffel symbols $\Gamma^{ij}_k=g^{is}\Gamma^j_{sk}$ are symmetric in upper indices,
\item   $\Gamma^{ij}_k = -\frac{1}{2} \frac{\partial g^{ij}}{\partial u^k}$,
\item   The Christoffel symbols  (of the first kind) $\Gamma_{ijk} = g_{ks}\Gamma^s_{ij}$ are totally symmetric in lower indices,
\item   $\Gamma_{ijk} = \frac{1}{2}\frac{\partial g_{ij}}{\partial u^k}$,
\item   $g_{ij}$ is a Hessian metric, i.e.,  $g_{ij} = \frac{\partial^2 F}{\partial u^i\partial u^j}$ for a certain function $F(u^1,\dots, u^n)$\footnote{Following \cite{Shima} we will refer to this function $F$ as the {\it potential}.}.
\end{itemize} 
\end{Lemma}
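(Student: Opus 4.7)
The lemma is a routine exercise in index bookkeeping based on the compatibility identity $\nabla g=0$ and, for the Hessian characterisation, the Poincar\'e lemma. My plan is to reduce all five conditions to a single scalar symmetry on the metric coefficients and then chain them together.

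First, from $\nabla_k g^{ij}=0$ and $\nabla_i g_{jk}=0$ I will derive the two compatibility identities
\begin{equation*}
\Gamma^{ij}_k+\Gamma^{ji}_k=-\partial_k g^{ij}\quad\text{and}\quad \Gamma_{ijk}+\Gamma_{ikj}=\partial_i g_{jk}.
\end{equation*}
The first one immediately yields (1)$\Leftrightarrow$(2): the sum of $\Gamma^{ij}_k$ and $\Gamma^{ji}_k$ is fixed, so symmetry in $i,j$ is equivalent to each summand being $-\tfrac12\partial_k g^{ij}$. A parallel argument, together with the fact that under the convention $\Gamma_{ijk}=g_{ks}\Gamma^s_{ij}$ the symbol $\Gamma_{ijk}$ is automatically symmetric in $i,j$, gives (3)$\Leftrightarrow$(4): total symmetry in $i,j,k$ amounts to the extra symmetry $\Gamma_{ijk}=\Gamma_{ikj}$, which by the identity above is equivalent to $\Gamma_{ijk}=\tfrac12\partial_i g_{jk}$, and under total symmetry this is the same as $\tfrac12\partial_k g_{ij}$.

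Next I will verify that conditions (1)--(4) are all equivalent to the single scalar symmetry
\begin{equation*}
(\star)\qquad \partial_i g_{jk}=\partial_k g_{ij}\qquad\text{for all }i,j,k.
\end{equation*}
Using the explicit formula $\Gamma_{ijk}=\tfrac12(\partial_i g_{kj}+\partial_j g_{ki}-\partial_k g_{ij})$, the requirement $\Gamma_{ijk}=\Gamma_{jki}$ reduces directly to $(\star)$, establishing (3)$\Leftrightarrow(\star)$. For (1), a short index manipulation gives $\Gamma^{ij}_k-\Gamma^{ji}_k=g^{ia}g^{jb}(\partial_a g_{bk}-\partial_b g_{ak})$, and nondegeneracy of $g$ implies that this vanishes for all $i,j,k$ precisely when $(\star)$ holds.

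Finally, (5)$\Leftrightarrow(\star)$ is the Poincar\'e lemma applied twice. If $g_{ij}=\partial_i\partial_j F$, total symmetry of $\partial_k g_{ij}$ is manifest. Conversely, $(\star)$ says that for each fixed $j$ the 1-form $\sum_i g_{ij}\,du^i$ is closed, so locally $g_{ij}=\partial_i\omega_j$ for some functions $\omega_j$; the symmetry $g_{ij}=g_{ji}$ then forces $\partial_i\omega_j=\partial_j\omega_i$, so $\omega_j\,du^j$ itself is closed and locally equals $dF$ for a potential $F$, giving $g_{ij}=\partial_i\partial_j F$. No substantive obstacle is expected; the only pitfall is keeping track of the paper's convention, which places the lowered index of $\Gamma_{ijk}$ last rather than first.
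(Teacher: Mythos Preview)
Your argument is correct. The paper does not actually prove this lemma: it states that ``the proof is straightforward and is left to the reader''. Your chain of equivalences via the compatibility identities and the scalar symmetry $(\star)$, followed by the double application of the Poincar\'e lemma for the Hessian characterisation, fills in exactly the kind of routine verification the authors had in mind.
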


This lemma immediately implies the following explicit formula for the contravariant Christoffel symbols of $g$ in pro-Frobenius coordinates:
        \begin{equation}\label{cw3}
        \Gamma^{\alpha \beta}_s = - \frac{1}{2}  \frac{\partial g^{\alpha\beta}}{\partial u^s} = - \frac{1}{2}    a^{\alpha \beta}_s + 2K u^{\alpha} \delta^{\beta}_s + 2K \delta^{\alpha}_s u^{\beta}.   
        \end{equation}

\begin{Remark}{\rm
Notice that for $K = 0$,   a pro-Frobenius coordinate system is Frobenius  (see Fact \ref{fact:4}).   Also it can be checked that a pro-Frobenius metric \eqref{eq:proFrobmetric}  has constant curvature $K$.   For this reason,  pro-Frobenius metrics can naturally be understood as generalisation of Frobenius metrics \eqref{eq:Frobcoord} to the case of constant curvature metrics.
}\end{Remark}

\subsection{Warp product of pro-Frobenius metrics}

The proof of Theorem \ref{thm:frobenius2a} is based on the following key statement.

\begin{Proposition}\label{prop:keyprop}
Let $g$ and $\widehat g$ be two constant curvature metrics written in pro-Frobenius coordinates $u=(u^1,\dots, u^{n_1})$ and $v=(v^1,\dots , v^{n_2})$ respectively.  
Let $f(u)$ be a non-homogeneous linear function satisfying the following relations
\begin{itemize}
\item[\rm(b1)]
$\operatorname{grad} f(u) = f(u) (\alpha - 4K u)$, where $\alpha = \sum \alpha^i\frac{\partial}{\partial u^i}$ is a constant vector field and $u=  \sum u^i\frac{\partial}{\partial u^i}$,

\item[\rm(b2)]
$\frac{1}{f(u)} g(\ddd f, \ddd f) =  4\widehat K - 4K f(u)$, where $K$ and $\widehat K$ are the curvatures of $g$ and $\widehat g$ respectively.
\end{itemize}

Then the warp product metric\footnote{This metric was also discussed in Section \ref{sect:7}, see Fact \ref{fact:6,5} et seq. Note the difference in notation and settings used.  Unlike  
Fact \ref{fact:6,5}, now we are working with contravariant metrics,   and our {\it new} function $f$ was previously denoted by $f^2$.}
$g_{\mathsf w} = g + \frac{1}{f} \, \widehat g$ has constant curvature and the coordinate system $u^1,\dots, u^{n_1},   y^1= f v^1, \dots , y^{n_2} = f v^{n_2}$  is pro-Frobenius for $g_{\mathsf w}$.
\end{Proposition}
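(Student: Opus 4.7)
The plan is to change coordinates from $(u^\alpha, v^j)$ to $(U^\alpha, Y^j) := (u^\alpha, f(u) v^j)$, recompute $g_{\mathsf w}^{AB}$ in the new coordinates, and then read off the pro-Frobenius structure directly. Write $f(u) = c_0 + c_s u^s$, so that $v^j = Y^j/f$; the Jacobian has entries $\partial Y^j/\partial u^\gamma = v^j\partial_\gamma f$ and $\partial Y^j/\partial v^k = f\delta^j_k$, and the transformation law for contravariant tensors gives
\begin{align*}
g_{\mathsf w}^{\alpha\beta} = g^{\alpha\beta}, \qquad g_{\mathsf w}^{\alpha j} = v^j\, g^{\alpha\gamma}\partial_\gamma f, \qquad g_{\mathsf w}^{ij} = v^iv^j\, g(\ddd f, \ddd f) + f\,\widehat g^{ij}.
\end{align*}
The first substantive step is to substitute (b1) in the form $g^{\alpha\gamma}\partial_\gamma f = f(\alpha^\alpha - 4Ku^\alpha)$ and (b2) in the form $g(\ddd f, \ddd f) = f(4\widehat K - 4Kf)$, together with the pro-Frobenius expression for $\widehat g^{ij}$; the seemingly singular factors of $1/f$ cancel identically, producing
\begin{align*}
g_{\mathsf w}^{\alpha\beta} &= b^{\alpha\beta} + a^{\alpha\beta}_s u^s - 4K\, u^\alpha u^\beta,\\
g_{\mathsf w}^{\alpha j} &= \alpha^\alpha Y^j - 4K\, u^\alpha Y^j,\\
g_{\mathsf w}^{ij} &= c_0\widehat b^{ij} + c_s\widehat b^{ij}\, u^s + \widehat a^{ij}_s Y^s - 4K\, Y^iY^j.
\end{align*}

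Setting $U^S := (u^s, Y^j)$, these formulas exhibit $g_{\mathsf w}^{AB}$ in exactly the shape \eqref{eq:proFrobmetric}: its quadratic part is $-4K\, U^A U^B$, while the constant plus linear part assembles into data $B^{AB}$ and $A^{AB}_S$, and the symmetry $A^{AB}_S = A^{BA}_S$ is inherited from the symmetry of $g_{\mathsf w}$. Formula \eqref{cw3} then yields $\Gamma^{AB}_S = -\tfrac12 A^{AB}_S + 2K\bigl(U^A\delta^B_S + \delta^A_S U^B\bigr)$, which is manifestly symmetric in the upper indices, so by Lemma \ref{lem:8.1} the coordinates $(U^s)$ are pro-Frobenius for $g_{\mathsf w}$. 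Constant curvature of $g_{\mathsf w}$ follows at once from the remark preceding the proposition, which asserts that every metric of shape \eqref{eq:proFrobmetric} has constant curvature $K$; alternatively, it can be read off from Fact \ref{fact:6,5} applied to the warped structure $g + \tfrac{1}{f}\widehat g$, since (b2) supplies precisely the identity between the squared gradient of the warping factor and the fibre curvature required there.

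I expect the genuinely non-trivial step to be the $1/f$-cancellation above: it is the particular normalisations in (b1) and (b2) that force the a priori singular expressions $v^j g^{\alpha\gamma}\partial_\gamma f$ and $v^iv^j g(\ddd f, \ddd f) + f\widehat g^{ij}$ to reduce to polynomials of total degree $\le 2$ in $(u, Y)$. A secondary bookkeeping task, should one wish to avoid invoking the preceding remark and derive constant curvature intrinsically, is to verify associativity of $A^{AB}_S$ and the Frobenius identity for $B^{AB}$; both follow by combining the corresponding laws for $(a, b)$ and $(\widehat a,\widehat b)$ with the linear consequences $b^{ij}c_j = c_0\alpha^i$, $a^{ij}_s c_j = c_s\alpha^i - 4Kc_0\delta^i_s$ and $c_i\alpha^i = 4\widehat K - 4K c_0$ that are extracted by matching coefficients in (b1) and (b2).
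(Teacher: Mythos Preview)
Your coordinate-change computation is correct and coincides with the first half of the paper's proof: both pass to $(u,y)$ with $y^j=fv^j$ and show that $g_{\mathsf w}^{AB}$ acquires the shape $B^{AB}+A^{AB}_S U^S-4K\,U^AU^B$. The gap is in the step that follows. You invoke \eqref{cw3} to write $\Gamma^{AB}_S=-\tfrac12 A^{AB}_S+2K(U^A\delta^B_S+\delta^A_S U^B)$ and then observe this expression is symmetric in $A,B$. But \eqref{cw3} is not a general identity for metrics of the form \eqref{eq:proFrobmetric}; it is the formula one obtains \emph{after} assuming the coordinates are pro-Frobenius, via the second bullet of Lemma \ref{lem:8.1}. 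So the argument is circular. Merely having the shape \eqref{eq:proFrobmetric} does not force $\Gamma^{AB}_S=\Gamma^{BA}_S$: already for $K=0$ this is the content of Fact \ref{fact:4}, where the extra Frobenius-algebra identities on $a^{\alpha\beta}_s$ and $b^{\alpha\beta}$ are what makes the equivalence work, and for $K\ne 0$ the paper offers no algebraic substitute.

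The paper closes this gap through the last equivalence in Lemma \ref{lem:8.1}: it proves directly that the \emph{covariant} metric $(g_{\mathsf w})_{ij}$ is Hessian in the coordinates $(u,y)$, exhibiting the potential $F_{\mathsf w}=F(u)+f(u)\,\widehat F\bigl(y/f(u)\bigr)$, where $F$ and $\widehat F$ are Hessian potentials for $g$ and $\widehat g$. That computation uses only that $f$ is affine in $u$, not conditions (b1)--(b2); the role of (b1)--(b2) is exactly what you identified, namely to produce the form \eqref{eq:proFrobmetric} after the coordinate change. Your proposed ``secondary bookkeeping'' (checking associativity of $A$ and the Frobenius identity for $B$) would, via Fact \ref{fact:4}, repair the argument when $K=0$, but does not obviously cover $K\ne 0$; the Hessian route handles all $K$ uniformly and is shorter. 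Note also that your appeal to the preceding Remark for constant curvature, and your alternative via Fact \ref{fact:6,5}, both presuppose what is at issue: the Remark needs pro-Frobenius already established, and Fact \ref{fact:6,5} runs in the opposite direction.
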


\begin{proof}
The matrix of  $g_{\mathsf w}$ in coordinates $u, y$ takes the form
$$
g_{\mathsf w} = \begin{pmatrix}  \Id & 0 \\ v\cdot \ddd f & f \cdot \Id \end{pmatrix}
\begin{pmatrix}  g & 0 \\ 0 & \frac{1}{f} \widehat g  \end{pmatrix}
\begin{pmatrix} \Id &  \ddd f^\top \cdot v^\top \\ 0 & f \cdot \Id \end{pmatrix} =
\begin{pmatrix}
g &  \operatorname{grad} f \cdot v^\top \\
v \cdot (\operatorname{grad} f)^\top &  g(\ddd f,\ddd f) v \cdot v^\top {+} f \, \widehat g
\end{pmatrix}
$$
Hence the off-diagonal block has the following form
$$
 \operatorname{grad} f \cdot v^\top  =  f(u) (\alpha - 4K u) \cdot v^\top = (\alpha - 4K u) \cdot y^\top.
$$ 
The lower diagonal block is 
$$
g(\ddd f,\ddd f) v \cdot v^\top + f\, \widehat g = (4\widehat Kf - 4K f^2 ) v \cdot v^\top + f \, \widehat g   
$$
Recall that  $\widehat g$ consists of three parts, constant, linear and quadratic
$$
\widehat g = \widehat b + \widehat a(v) - 4\widehat K v\cdot v^\top \quad \mbox{or, in more detail,} \quad 
\widehat g^{ij} = \widehat b^{ij} + \widehat a^{ij}_s v^s -  4\widehat K v^i v^j.
$$
Substitution gives:
$$
\begin{aligned}
(&4\widehat Kf - 4K f^2 ) v \cdot v^\top + f \, \widehat g =
(4\widehat Kf - 4K f^2 ) v \cdot v^\top + f (\widehat b + \widehat a(v) - 4\widehat K v\cdot v^\top ) = \\
& f \widehat b + \widehat a(y) - 4K y \cdot y^\top
\end{aligned}
$$
Summarising  (and denoting  $g = b + a(u) - 4K u\cdot u^\top$)  we get 
$$
g_{\mathsf w} = \begin{pmatrix} b & 0 \\ 0 & f(0) \widehat b  \end{pmatrix}
+ \begin{pmatrix}  a(u)   &     \alpha \cdot y^\top \\ y \cdot \alpha^\top &  \widehat a(y) + m(u) \widehat b  \end{pmatrix}  - 4K \begin{pmatrix}  u \\ y \end{pmatrix}
\begin{pmatrix} u^\top \  y^\top \end{pmatrix},
$$ 
where $m(u) = f(u) - f(0)$ is the (homogeneous) linear part of the function $f$.  
This shows that $g_{\mathsf w}$ takes form \eqref{eq:proFrobmetric}  in coordinates $(u,y)$. 

To complete the proof we only need to show that the (covariant) metric  $(g_{\mathsf w})_{ij}$ is Hessian (see Lemma \ref{lem:8.1}).  This fact follows from    

\begin{Lemma}
Suppose $g_{ij}(u)=\frac{\partial^2 F}{\partial u^i\partial u^j}$ is a Hessian metric in coordinates $u^1,\dots, u^{n_1}$ and $\widehat g_{\alpha\beta}(v)=\frac{\partial^2 \widehat F}{\partial v^\alpha\partial v^\beta}$ is a Hessian metric in coordinates $v^1,\dots, v^{n_2}$.   Then the (covariant) warp product metric 
\begin{equation}
\label{eq:wprod1}
(g_{\mathsf w})_{ij} =  g_{ij}(u)\ddd u^i \ddd u^j + f(u) \widehat g_{\alpha\beta}(v) \ddd v^\alpha \ddd v^\beta
\end{equation}
where $f(u)$ is an arbitrary linear function in $u$,  is Hessian in coordinates $u^1,\dots, u^{n_1}, y^1,\dots, y^{n_2}$, where $y^i = f(u) v^i$.  The corresponding potential is  $F_{\mathsf w} = F + f \cdot \widehat F = F(u^1,\dots, u^{n_1}) + 
f(u) \widehat F \left(\frac{y^1}{f(u)}, \dots , \frac{y^{n_2}}{f(u)}\right)$.
\end{Lemma}

\begin{proof}
Let us rewrite the warp product metric \eqref{eq:wprod1} in coordinates $u, y$: 
$$
g_{ij} \ddd u^i \ddd u^j + f \,\widehat g_{\alpha\beta} \, \ddd v^\alpha \ddd v^\beta =
g_{ij} \ddd u^i \ddd u^j + f \,\widehat g_{\alpha\beta} \, \ddd \left(\frac{y^\alpha}{f}\right) \ddd\left(\frac{y^\beta}{f}\right) =
$$
$$
g_{ij} \ddd u^i \ddd u^j + f \,\widehat g_{\alpha\beta} \,  \left(\frac{\ddd y^\alpha}{f} -  \frac{y^\alpha \ddd f}{f^2}\right) \left(\frac{\ddd y^\beta}{f} -  \frac{y^\beta \ddd f}{f^2}\right) =
$$
\begin{equation}
\label{eq:wprod2}
\left(g_{ij} +    \frac{1}{f^3} \,\widehat g_{\alpha\beta}y^\alpha y^\beta  \frac{\partial f}{\partial u^i}\frac{\partial f}{\partial u^j}   \right)\ddd u^i \ddd u^j   
- 2 \frac{1}{f^2}  \widehat g_{\alpha\beta}y^\alpha\frac{\partial f}{\partial u^j} \ddd y^\beta \ddd u^j 
+ \frac{1}{f}\widehat g_{\alpha\beta} \ddd y^\alpha\ddd y^\beta.
 \end{equation}
 
 Then we compute the second derivatives of the function 
$F + f \cdot \widehat F$  (here  we use  $v^\alpha = \frac{y^\alpha}{ f(u)}$ and $\frac{\partial \widehat F}{\partial u^j} = \sum_\alpha \frac{\partial \widehat F}{\partial v^\alpha} \frac{\partial v^\alpha}{\partial u^j}=
- \sum_\alpha \frac{\partial \widehat F}{\partial v^\alpha} \frac{y^\alpha}{ f^2} \frac{\partial f}{\partial u^j}$ )  :
$$
\frac{\partial^2}{\partial u^i\partial u^j}\left(F + f \cdot \widehat F\right) = 
\frac{\partial^2 F}{\partial u^i\partial u^j} + \frac{\partial }{\partial u^i}\left( \frac{\partial f}{\partial u^j} \cdot \widehat F
+  f \cdot \frac{\partial \widehat F}{\partial u^j}\right) =
$$  
$$
\frac{\partial^2 F}{\partial u^i\partial u^j} + \frac{\partial }{\partial u^i}\left( \frac{\partial f}{\partial u^j} \cdot \widehat F
- \frac{1}{f} \sum_\alpha \frac{\partial \widehat F}{\partial v^\alpha} y^\alpha \frac{\partial f}{\partial u^j} \right)=  \mbox{\small (we use $\frac{\partial f }{\partial u^j}=\mathrm{const}$ as $f$ is linear)}
$$  
$$
\frac{\partial^2 F}{\partial u^i\partial u^j} + \frac{\partial f}{\partial u^j}  \left( - \sum_\alpha \frac{\partial \widehat F}{\partial v^\alpha} \frac{y^\alpha}{ f^2} \frac{\partial f}{\partial u^j} 
+ \frac{1}{f^2} \frac{\partial f}{\partial u^i} \cdot \sum_\alpha \frac{\partial \widehat F}{\partial v^\alpha} y^\alpha -
\frac{1}{f} \sum_\alpha \frac{\partial}{\partial u^i} \left(\frac{\partial \widehat F}{\partial v^\alpha}\right) y^\alpha
\right)=
$$  
$$
\frac{\partial^2 F}{\partial u^i\partial u^j} - \frac{\partial f}{\partial u^j}  \,  
\frac{1}{f} \sum_{\alpha,\beta} \frac{\partial^2 \widehat F}{\partial v^\beta \partial v^\alpha} \left( - \frac{1}{f^2} \right)   \frac{\partial f}{\partial u^i}  y^\alpha y^\beta=g_{ij} +  \frac{1}{f^3} \, \sum_{\alpha,\beta}\widehat g_{\alpha\beta}y^\alpha y^\beta  \frac{\partial f}{\partial u^i}\frac{\partial f}{\partial u^j}.
$$ 

Next,
$$
\frac{\partial^2}{\partial y^\beta \partial u^j}\left(F + f \cdot \widehat F\right) =
\frac{\partial }{\partial y^\beta}\left( \frac{\partial f}{\partial u^j} \cdot \widehat F
- \frac{1}{f} \sum_\alpha \frac{\partial \widehat F}{\partial v^\alpha} y^\alpha \frac{\partial f}{\partial u^j} \right) =
\frac{\partial f}{\partial u^j} \cdot \frac{\partial }{\partial y^\beta}\left(  \widehat F
- \frac{1}{f} \sum_\alpha \frac{\partial \widehat F}{\partial v^\alpha} y^\alpha  \right)=
$$  
$$
\frac{\partial f}{\partial u^j}\left(  \frac{\partial \widehat F}{\partial v^\beta} \frac{1}{f} - \frac{1}{f} \frac{\partial \widehat F}{\partial v^\beta} - \frac{1}{f}\sum_{\alpha} \frac{\partial^2 \widehat F}{\partial v^\alpha\partial v^\beta}  y^\alpha \frac{1}{f} 
\right) = -\frac{\partial f}{\partial u^j} \frac{1}{f^2} \sum_\alpha \widehat g_{\alpha\beta} y^\alpha.  
$$
And finally,
$$
\frac{\partial^2}{\partial y^\alpha \partial y^\beta}\left(F + f \cdot \widehat F\right) =
f \frac{\partial^2 \widehat F}{\partial y^\alpha \partial y^\beta} = f \frac{\partial^2 \widehat F}{\partial v^\alpha \partial v^\beta } \frac{1}{f^2} = \frac{1}{f} \,\widehat g^{\alpha\beta}.
$$

Comparing $\frac{\partial^2}{\partial u^i \partial u^j}\left(F + f \cdot \widehat F\right)$, $\frac{\partial^2}{\partial y^\alpha \partial u^j}\left(F + f \cdot \widehat F\right) $ and $\frac{\partial^2}{\partial y^\alpha \partial y^\beta}\left(F + f \cdot \widehat F\right)$  with \eqref{eq:wprod2} shows that $g_{\mathsf w}$ is Hessian in coordinates $(u,y)$, as stated. \end{proof}

This completes the proof of Proposition \ref{prop:keyprop}.  \end{proof}

Our goal is to show that every metric from Theorem \ref{thm:frobenius2a} is Frobenius in the corresponding coordinate system. It is seen from the construction of such a metric that it can be obtained step by step by iterating the warp product construction (following the rules presctribed by the graph $\mathsf F$).     At each step we have two constant curvature metrics $g$ and $\widehat g$ written in pro-Frobenius coordinate systems and then we `glue' them by using a certain linear function $f$ as described in Proposition  \ref{prop:keyprop}.   This function must satisfy property  (b1).  (The second property (b2) should be understood as a condition on the curvature $\widehat K$ of the metric $\widehat g$.)    In order to iterate the warp product procedure, we need to describe those linear funtions  $f_{\mathsf w} (u,y)$  which satisfy (b1) for the metric $g_{\mathsf w}$.  The next statement provides such a description.

\begin{Proposition}\label{prop:keyprop2}
In the notation of Proposition \ref{prop:keyprop}, consider the warp product metric $g_{\mathsf w} = g + \frac{1}{f} \, \widehat g$.

\begin{itemize}
\item[\rm 1.]  Let  $h(u)$ be another function (independent on  $f$) satisfying condition {\rm(b1)} for $g$, that is $\operatorname{grad}_g h = h(\beta - 4Ku)$ for some constant vector $\beta$. 
Then  $h(u)$ satisfies {\rm(b1)} for $g_{\mathsf w}$, namely
$$
\operatorname{grad}_{g_{\mathsf w}}  h = h \left( \begin{pmatrix} \beta \\ 0  \end{pmatrix}   - 4K  \begin{pmatrix} u \\ y  \end{pmatrix} \right).
$$
\item[\rm 2.]  The function $f(u)$ itself  satisfies {\rm(b1)} for $g_{\mathsf w}$ under the additional condition that
$\langle \alpha, \ddd f \rangle + 4K f(0)=0$, namely
$$
\operatorname{grad}_{g_{\mathsf w}}  f = f \left( \begin{pmatrix} \alpha \\ 0  \end{pmatrix}   - 4K  \begin{pmatrix} u \\ y  \end{pmatrix} \right).
$$
\item[\rm 3.]  Let $\widehat h(v)$ be a function satisfying conditions {\rm(b1)} for the metric $\widehat g$, that is, 
$\operatorname{grad}_{\widehat g} \widehat h = \widehat h ( \widehat \beta  - 4\widehat K v)$.
Then
$f(u)\widehat h(v)=f(u) \widehat h\bigl( \frac{y}{f} )$ satisfies {\rm(b1)} for $g_{\mathsf w}$, namely
$$
\operatorname{grad}_{g_{\mathsf w}} \bigl(  f(u) \widehat h(v)   \bigr) =f(u)  \widehat h(v)  \left( \begin{pmatrix} \alpha \\ \widehat\beta  \end{pmatrix}   - 4K  \begin{pmatrix} u \\ y  \end{pmatrix} \right).
$$    

{\rm(}Notice that $f(u)\widehat h(v)$ is linear in coordinates $(u, y)$. Indeed,   if $f(u)=m_0+\sum m_iu^i$ and $\widehat h(v) =  \widehat m_0 + \sum \widehat m_jv^j$, then
$f(u)\widehat h(v)=f(u)   (\widehat m_0 + \sum \widehat m_jv^j) = \widehat m_0 f(u) +  \sum \widehat m_jy^j$.{\rm)}

\end{itemize}

\end{Proposition}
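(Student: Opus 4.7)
The proof of all three parts is a single direct calculation using the explicit block form of $g_{\mathsf w}$ in the coordinates $(u,y)=(u,f(u)v)$ that was written down inside the proof of Proposition~\ref{prop:keyprop}, namely $g_{\mathsf w}^{ij}=g^{ij}(u)$, $g_{\mathsf w}^{i\alpha}=v^\alpha(\operatorname{grad}_g f)^i$ and $g_{\mathsf w}^{\alpha\beta}=g(\ddd f,\ddd f)\,v^\alpha v^\beta+f\,\widehat g^{\alpha\beta}(v)$, with $v^\alpha=y^\alpha/f(u)$. Each of the three functions $h(u)$, $f(u)$ and $f(u)\widehat h(v)$ is in fact \emph{linear} in the coordinates $(u,y)$ — for the last one because $f(u)\widehat h(v)=\widehat h(0)f(u)+\sum_\alpha \widehat h_\alpha y^\alpha$ when $\widehat h$ is linear — so its differential has essentially constant entries and $\operatorname{grad}_{g_{\mathsf w}}F$ is obtained just by multiplying the block matrix above by $\ddd F$.

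For part~1, the $u$-block of $\operatorname{grad}_{g_{\mathsf w}} h$ simply equals $\operatorname{grad}_g h=h(\beta-4Ku)$, while the $y^\alpha$-entry works out to $v^\alpha g(\ddd f,\ddd h)$; applying (b1) to $f$ and using $\langle \ddd h,u\rangle=h-h(0)$ turns this entry into $y^\alpha\bigl[\langle\alpha,\ddd h\rangle+4Kh(0)-4Kh\bigr]$, so everything reduces to the identity $\langle\alpha,\ddd h\rangle+4Kh(0)=0$. I would establish this by contracting the (b1)-relation $a^{ij}_s f_j=\alpha^i f_s-4Kf(0)\delta^i_s$ with $h_i$, doing the same for $h$, and then exploiting the symmetry $a^{ij}_s=a^{ji}_s$: this produces the symmetric relation $f_s\bigl(\langle\alpha,\ddd h\rangle+4Kh(0)\bigr)=h_s\bigl(\langle\beta,\ddd f\rangle+4Kf(0)\bigr)$ for every $s$, so the desired vanishing holds whenever $\ddd f$ and $\ddd h$ are non-proportional, which is how I read the parenthetical ``independent on~$f$''. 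Part~2 is then the specialisation $h=f$: by (b2) the $y^\alpha$-component equals $y^\alpha(4\widehat K-4Kf)$, and the extra hypothesis $\langle\alpha,\ddd f\rangle+4Kf(0)=0$ combined with the same self-contraction forces $\widehat K=0$ and collapses it to $-4Kfy^\alpha$.

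Part~3 is the main step. For $F=f\widehat h=\widehat h(0)f(u)+\sum\widehat h_\alpha y^\alpha$ the $u$-component of $\operatorname{grad}_{g_{\mathsf w}}F$ telescopes to $\widehat h(0)\operatorname{grad}_g f+(v^\alpha\widehat h_\alpha)\operatorname{grad}_g f=\widehat h\operatorname{grad}_g f$, which by (b1) equals $F(\alpha-4Ku)$. The $y^\alpha$-component splits into three pieces,
\begin{equation*}
v^\alpha \widehat h(0)\,g(\ddd f,\ddd f)\ +\ v^\alpha(\widehat h-\widehat h(0))\,g(\ddd f,\ddd f)\ +\ f\,(\operatorname{grad}_{\widehat g}\widehat h)^\alpha,
\end{equation*}
and, after substituting $g(\ddd f,\ddd f)=f(4\widehat K-4Kf)$ from (b2) and $\operatorname{grad}_{\widehat g}\widehat h=\widehat h(\widehat\beta-4\widehat K v)$ from (b1) for $\widehat h$, the three sources of $\widehat K$-terms cancel and only $F\widehat\beta^\alpha-4KFy^\alpha$ survives. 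The main obstacle is this last bookkeeping: three independent contributions proportional to $\widehat K$ — from (b2), from the pro-Frobenius quadratic correction $-4\widehat K\,vv^\top$ hidden inside $\widehat g$, and from (b1) for $\widehat h$ — must assemble themselves so that only the ambient curvature $K$ and the augmented constant vector $(\alpha,\widehat\beta)$ remain in the final answer.
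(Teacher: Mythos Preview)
Your proof is correct and follows essentially the same route as the paper: compute $\operatorname{grad}_{g_{\mathsf w}}$ directly from the block form of $g_{\mathsf w}$ in $(u,y)$-coordinates, and reduce everything to the identity $\langle\alpha,\ddd h\rangle+4Kh(0)=0$ (respectively its $h=f$ analogue) and the $\widehat K$-cancellation in Part~3. The only real difference is how you obtain that identity in Part~1: you extract the coefficient-level relation $a^{ij}_s f_j=\alpha^i f_s-4Kf(0)\delta^i_s$ from (b1) and contract it symmetrically with $\ddd h$, whereas the paper evaluates $g(\ddd f,\ddd h)$ in two ways via (b1) for $f$ and for $h$; both arguments produce the same relation $f_s\bigl(\langle\alpha,\ddd h\rangle+4Kh(0)\bigr)=h_s\bigl(\langle\beta,\ddd f\rangle+4Kf(0)\bigr)$ and then use non-proportionality of $\ddd f,\ddd h$. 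Your remark in Part~2 that the extra hypothesis is equivalent to $\widehat K=0$ is a correct and useful observation (it is exactly the content of the computation appearing later in the paper's Proposition on roots of $P$), though the paper's own proof of Part~2 bypasses this and simply re-runs the Part~1 computation with the hypothesis inserted.
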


\begin{proof}
1.   We first notice that $g(\ddd f, \ddd h) = \langle \ddd h , \operatorname{grad}_g f \rangle =  \langle \ddd h , f (\alpha - 4Ku) \rangle $ and, on the other hand,
$g(\ddd f, \ddd h) =  \langle \ddd f, \operatorname{grad}_g h  \rangle =  \langle \ddd f,  h(\beta - 4Ku) \rangle $. 
Since $f$ and $h$ are linear  (perhaps non-homogeneous), we have $\langle \ddd h, u\rangle = h- h_0$ and similarly  $\langle \ddd f, u\rangle = f- f_0$. Hence,
$$
g(\ddd f, \ddd h)=\langle \ddd f,  h(\beta - 4Ku) \rangle = h \langle \ddd f, \beta\rangle - 4Kh (f - f_0) = 
h (\langle \ddd f,\beta\rangle + 4Kf_0) - 4Khf,
$$
and, similarly,  
$$
g(\ddd f, \ddd h) = f (\langle \ddd h, \alpha\rangle + 4Kh_0) - 4Khf.
$$
It follows from this that $h (\langle \ddd f, \beta\rangle + 4K  f_0) =  f (\langle \ddd h, \alpha\rangle + 4Kh_0)$. Notice that the expressions in brackets are some constants.  Since $h$ and $f$ are not proportional, we conclude that these expressions are identically zero implying $g(\ddd f, \ddd h) = - 4Kfh$ and $\langle \ddd h, \alpha\rangle + 4Kh_0 =0$.

We now compute $\operatorname{grad}_{g_{\mathsf w}} h$:
$$
\operatorname{grad}_{g_{\mathsf w}} h = \begin{pmatrix}
g & \!\!\!\!\!\!\!\! (\alpha{-}4K u)\cdot y^\top \\
y \cdot (\alpha{-}4K u)^\top &  *
\end{pmatrix} \begin{pmatrix} \ddd h \\ 0 \end{pmatrix} = \begin{pmatrix}  h (\beta - 4Ku) \\  y \cdot (\alpha - 4K u)^\top \ddd h \end{pmatrix},
$$
where
$$
\begin{aligned}
y \cdot (\alpha - 4K u)^\top \ddd h &=  y\cdot (\alpha\, \ddd h + 4K h(0)) - y\cdot 4K (h(0) + \bigl(u^\top \ddd h)\bigr)  \\
&= y\cdot 0 - y \cdot 4K h = - y \cdot 4K h,
\end{aligned}
$$
which gives  
$
\operatorname{grad}_{g_{\mathsf w}} h  =  h \left(  \begin{pmatrix}  \beta \\ 0 \end{pmatrix}   - 4K \begin{pmatrix}  u \\ y \end{pmatrix}\right)
$,
as required.

2. The proof is just the same,  but we need to use $\alpha \,\ddd f + 4K f(0)=0$ as an additional condition.

3. We now compute the gradient of 
$f(u)\widehat h(v)=f(u) \widehat h\bigl( \frac{y}{f} )=f(u)   (\widehat m_0 + \sum \widehat m_jv^j) = \widehat m_0 f(u) +  \sum \widehat m_jy^j= \widehat h(0) f(u) + \widehat h(y) - \widehat h(0)  $.
$$
\operatorname{grad}_{g_{\mathsf w}} \bigl(\widehat h(0) f(u) + \widehat h(y) - \widehat h(0) \bigr) = \begin{pmatrix}
g &  \operatorname{grad}_g f \cdot v^\top \\ 
v \cdot (\operatorname{grad}_g f)^\top &  g(\ddd f,\ddd f) v \cdot v^\top + f \widehat g
\end{pmatrix}  \begin{pmatrix} \widehat h(0) \ddd f  \\  \ddd \widehat h \end{pmatrix} =
$$
$$
\begin{pmatrix}
 \left(\widehat h(0) + v^\top \ddd \widehat h\right)  \operatorname{grad}_g f   \\
g(\ddd f, \ddd f) \left(\widehat h(0) +  v^\top \ddd\widehat h \right) v   + f \operatorname{grad}_{\widehat g} h(v)
\end{pmatrix} =
$$
$$
\begin{pmatrix}
 h(v) f(u) \bigl(\alpha - 4Ku\bigr)    \\
g(\ddd f, \ddd f) h(v) v   + f(u) h(v) \bigl( \widehat \beta - 4 \widehat Kv\bigr)
\end{pmatrix} =
$$
$$
\begin{pmatrix}
h(v) f(u) \bigl(\alpha - 4Ku\bigr)    \\
(4\widehat K f - 4K f^2)  h(v) v   + f(u) h(v) \bigl( \widehat \beta - 4 \widehat Kv\bigr)
\end{pmatrix} =
 h(v) f(u) \begin{pmatrix} \alpha - 4Ku \\ \widehat \beta - 4Ky \end{pmatrix},
$$
as required. \end{proof}

Propositions \ref{prop:keyprop}  and  \ref{prop:keyprop2}  allow us to construct pro-Frobenius metrics using metrics from extended AFF-pencils as building blocks.   We only need to describe appropriate function $f$ satisfying properties (b1) and (b2) from Proposition \ref{prop:keyprop}  for these metrics.    

\begin{Proposition}\label{prop:keyprop3}
Consider  metrics  $g=P(L)g_{\LC}$ and $\widehat g = \widehat P(\widehat L) \widehat g_{\LC}$ from two different extended AFF-pencils \eqref{eq:AFFextended}  with pro-Frobenius coordinates $u^1,\dots, u^{n_1}$ and $v^1,\dots, v^{n_2}$ respectively.
Let $\lambda$ be a root of $P(\cdot)$,  then

\begin{enumerate}

\item $f=\det (\lambda \Id-L)$ satisfies condition {\rm (b1)} , namely  $\operatorname{grad} f = f (\alpha - 4K u)$, where $K= \frac{1}{4}b_{n_1}$,
$\alpha =  (b_{n_1-1}, \dots , b_1, b_0)$ and $b_i$'s are defined from $P(t)=(t-\lambda)(b_0 + b_1t + \dots + b_{n_1} t^{n_1})$. 

\item If $\widehat P(t) = \widehat a_{n_2 + 1} t^{n_2+1} + \widehat a_{n_2} t^{n_2} + \dots$, then condition  {\rm (b2)}  for $f$ takes the form 
$\widehat a_{n_2 + 1} =  P'(\lambda)$.

\item Moreover,   if $\lambda$ is a double root of $P(\cdot)$, then  $\langle \alpha,\ddd f\rangle + 4 K f(0) = 0$ (cf. item 2 of Proposition  \ref{prop:keyprop2}).

\end{enumerate}
\end{Proposition}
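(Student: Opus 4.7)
The plan is to verify all three parts by one explicit computation of $\operatorname{grad} f$ performed in diagonal coordinates $x^1,\dots,x^{n_1}$ and then translated into pro-Frobenius coordinates $u^1,\dots,u^{n_1}$ by a single Lagrange interpolation identity. In diagonal coordinates the metric takes the diagonal form $g^{ii} = P(x^i)/\prod_{s\ne i}(x^i - x^s)$ and $f = \chi_L(\lambda) = \prod_s(\lambda - x^s)$, so the factor $x^i - \lambda$ from $P(x^i)$ combines with $-\prod_{s\ne i}(\lambda - x^s)$ from $\partial_{x^i} f$ to rebuild $f$ and give the clean formula
\begin{equation*}
(\operatorname{grad} f)^i_x \;=\; g^{ii}\,\partial_{x^i} f \;=\; \frac{Q(x^i)\, f}{\prod_{s\ne i}(x^i - x^s)},
\end{equation*}
where $Q(t) = \sum_{j=0}^{n_1} b_j t^j$ is defined by $P(t) = (t-\lambda)Q(t)$.

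To move into $u$-coordinates, note that writing $\chi_L(t) = t^{n_1} - \sum_k u^k t^{n_1-k}$ and differentiating by $x^i$ gives $\partial_{x^i}u^k = c_{i,k-1}$, where $c_{i,k-1}$ is the coefficient of $t^{n_1-k}$ in $\chi_{L,i}(t) := \chi_L(t)/(t - x^i)$. The heart of the computation is the Lagrange interpolation identity $F(t) = \sum_i F(x^i)\,\chi_{L,i}(t)/\chi_{L,i}(x^i)$ applied to $F(t) := Q(t) - b_{n_1}\chi_L(t)$, which has degree $\le n_1 - 1$ and agrees with $Q$ at every node $x^i$. Reading off the coefficient of $t^{n_1-k}$ on both sides yields
\begin{equation*}
\sum_i \frac{Q(x^i)\,c_{i,k-1}}{\prod_{s\ne i}(x^i - x^s)} \;=\; b_{n_1-k} + b_{n_1}\,u^k.
\end{equation*}
Combining with the transformation law $(\operatorname{grad} f)^k_u = \sum_i \partial_{x^i}u^k\cdot(\operatorname{grad} f)^i_x$ produces $(\operatorname{grad} f)^k = f\,(b_{n_1-k} + b_{n_1}\,u^k)$. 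Matching with the target form $f(\alpha^k - 4K u^k)$ forces $\alpha^k = b_{n_1-k}$ and $-4K = b_{n_1}$, which is consistent with Fact \ref{Fact:cas} since the leading coefficient of $P$ equals $b_{n_1}$ and the curvature of $g$ equals $-\tfrac{1}{4}\,a_{n_1+1}$. This proves part 1.

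Parts 2 and 3 will then fall out of a single further calculation. Contracting part 1 with $\ddd f$ and using $\langle \ddd f, u\rangle = f - f(0)$ gives $\tfrac{1}{f}\,g(\ddd f, \ddd f) = \langle \alpha,\ddd f\rangle - 4K\bigl(f - f(0)\bigr)$, while a direct evaluation with $\partial f/\partial u^i = -\lambda^{n_1-i}$ and $\alpha^i = b_{n_1-i}$ yields $\langle \alpha,\ddd f\rangle = -\sum_{j=0}^{n_1-1} b_j\lambda^j = -Q(\lambda) + b_{n_1}\lambda^{n_1} = -P'(\lambda) - 4K\,f(0)$, where we used $P'(\lambda) = Q(\lambda)$ because $P(\lambda) = 0$. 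Hence $\tfrac{1}{f}\,g(\ddd f,\ddd f) = -P'(\lambda) - 4K f$, which matches (b2) exactly when $4\widehat K = -P'(\lambda)$, i.e., by the curvature formula $\widehat K = -\tfrac{1}{4}\,\widehat a_{n_2+1}$, when $\widehat a_{n_2+1} = P'(\lambda)$. For part 3 the same computation immediately gives $\langle \alpha, \ddd f\rangle + 4K f(0) = -P'(\lambda)$, which vanishes precisely when $\lambda$ is a double root of $P$. The only non-routine ingredient is the Lagrange step, where the correction $-b_{n_1}\chi_L(t)$ must be introduced to lower the degree below $n_1$; everything else is bookkeeping of elementary symmetric polynomial identities, so no serious obstacle is anticipated.
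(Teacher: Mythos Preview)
Your proof is correct and matches the paper's approach for parts 2 and 3 almost verbatim: both derive $\tfrac{1}{f}g(\ddd f,\ddd f)=-P'(\lambda)-4Kf$ and then read off the two consequences. The only difference is that the paper obtains this identity by invoking \eqref{eq:length} from Fact~\ref{Fact:cas}, whereas you rederive it from part~1 by computing $\langle\alpha,\ddd f\rangle$ explicitly; the end result and the logical structure are the same.

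For part 1 the paper simply writes ``can be verified by a straightforward computation'' and gives no details. Your argument via diagonal coordinates and the Lagrange interpolation of $Q(t)-b_{n_1}\chi_L(t)$ is a clean explicit realisation of that computation; the subtraction of $b_{n_1}\chi_L(t)$ to drop the degree below $n_1$ is exactly the right trick, and it simultaneously explains both the $\alpha^k=b_{n_1-k}$ and the $-4K=b_{n_1}$ identifications. (Note that the sign you obtain, $K=-\tfrac14 b_{n_1}$, agrees with Fact~\ref{Fact:cas}; the ``$K=\tfrac14 b_{n_1}$'' in the displayed statement appears to be a typo in the paper.) So your part~1 is more informative than the paper's, while parts~2--3 are essentially identical.
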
 

\begin{proof}
The first part of this statement can be verified by a straightforward computation.  We also notice that $f(u)$ is linear in variables $u^1,\dots, u^{n_1+1}$. 

The second statement  easily follows from Fact \ref{Fact:cas}.  
Indeed, from \eqref{eq:length}  we get
\begin{equation}\label{eq:111}
\frac{1}{f} g(\ddd f, \ddd f) = - P'(\lambda) - 4K f.
\end{equation}
Therefore (b2) can be written as $4 \widehat K  = - P'(\lambda)$.  In view of Fact \ref{Fact:cas},  $\widehat K = -\frac{1}{4} \widehat a_{n_2 + 1}$. Hence, (b2)  amounts to $\widehat a_{n_2 + 1} =  P'(\lambda)$, as stated.

Finally, $\operatorname{grad} f = f (\alpha - 4K u)$ implies 
$$
\begin{aligned}
\frac{1}{f} g(\ddd f, \ddd f) &= \frac{1}{f} \langle \operatorname{grad} f , \ddd f\rangle =  \langle \alpha, \ddd f\rangle - 4 K \langle u, \ddd f \rangle \\
&= 
\langle \alpha, \ddd f\rangle - 4 K (f - f(0))=
\langle \alpha, \ddd f\rangle + 4K f(0)  - 4 K f.
\end{aligned}
$$
Comparing this relation with \eqref{eq:111}, we see that
$\langle \alpha, \ddd f\rangle + 4K f(0)  =  - P'(\lambda)$.
Hence, the l.h.s. of this relation vanishes if and only if $\lambda$ is a double root of $P(t)$.  
\end{proof}

We now `return' to the notation from Section \ref{subsect:4.1}. It follows from Propositions \ref{prop:keyprop} and \ref{prop:keyprop3} that the metric $P_\alpha(L_\alpha) g^{\LC}_\alpha + \frac{1}{\det(\lambda_\beta \Id-L)}
P_\beta(L_\beta) g^{\LC}_\beta$ is pro-Frobenius in coordinates $\sigma_\alpha^1, \dots, \sigma_\alpha^{n_\alpha}, 
\det(\lambda \Id-L)\cdot \sigma_\beta^1, \dots  , \det(\lambda \Id-L)\cdot \sigma_\beta^{n_\beta}$,  if 
$\lambda_\beta$ is a root of $P_\alpha(t)$ and $\overset{\beta}{a}_{n_\beta + 1} = (-1)^{n_\alpha} P_\alpha'(\lambda_\beta)$.  These two conditions exactly coincide with condition (ii) used in the construction of multi-block pencils from  Theorem \ref{thm:frobenius2a} (cf. Section \ref{subsect:4.3} where we discuss the two-block case in slightly different notation).  We can next repeat this construction by `adding' one more similar block  $P_\gamma(L_\gamma) g_\gamma^{\LC}$ to $P_\alpha(L_\alpha) \,g_\alpha^{\LC}$:
$$
P_\alpha(L_\alpha) \,g_\alpha^{\LC} + \frac{1}{\det(\lambda_\beta \Id -L_\alpha)}
P_\beta(L_\beta)\,  g_\beta^{\LC}   +  \frac{1}{\det (\lambda_\gamma \Id -L_\alpha)}   
P_\gamma(L_\gamma) \, g_\gamma^{\LC}.  
$$  
Of course, two conditions should be fulfilled, namely $P_\alpha(\lambda_\gamma)=0$ and $\overset{\gamma}a_{n_\gamma + 1} = (-1)^{n_\alpha} P_\alpha'(\lambda_\gamma)$.   Moreover, in view of Proposition \ref{prop:keyprop2} (item 2) and Proposition \ref{prop:keyprop2} (item 3),  if $\lambda_\beta = \lambda_\gamma$ we need to require that $\lambda_\beta$ is a double root of $P_\alpha$  (which is exactly condition (iii) used in the construction of multi-block pencils from  Theorem \ref{thm:frobenius2a}).

To complete the proof of Theorem \ref{thm:frobenius2a},  we should now iterate the above procedure step-by-step following the  combinatorial data provided by the graph $\mathsf F$ (starting from leaves and moving towards the root).  At each step of this construction we obtain a pencil of pro-Frobenius metrics, leading finally to a Frobenius pencil  (the condition (i) will guarantee the flatness of the final metric).

\subsection*{Statements and Declarations}

The authors have no relevant financial or non-financial interests to disclose.
The authors have no competing interests to declare that are relevant to the content of this article.

\end{document}